\title{Roots of $L$-functions of characters over function fields, generic linear independence and biases}
\author{Corentin Perret-Gentil}
\address{Centre de Recherches Mathématiques, Université de Montréal, Canada}
\curraddr{Zürich, Switzerland}
\email{corentin.perretgentil@gmail.com}
\newcommand{\Sec}{\operatorname{Sec}}
\newcommand{\Rel}{\operatorname{Rel}}
\newcommand{\Prim}{\operatorname{Prim}}
\newcommand{\univ}{\text{univ}}
\newcommand{\codd}{,\, \mathrm{odd}}
\newcommand{\Bic}{\mathcal{B}i}
\newcommand{\an}{\text{an}}
\newcommand{\ev}{\text{ev}}
\newcommand{\prim}{\text{prim}}
\newcommand{\Zar}{\mathrm{Zar}}
\date{May 2019. Revised: September 2019}
\subjclass[2010]{14G10, 11T23, 11R58, 11J72, 11N36}
\newcommand{\Bi}{\operatorname{Bi}}
\begin{document}

\begin{abstract}
  We first show joint uniform distribution of values of Kloosterman sums or Birch sums among all extensions of a finite field $\F_q$, for almost all couples of arguments in $\F_q^\times$, as well as lower bounds on differences. Using similar ideas, we then study the biases in the distribution of generalized angles of Gaussian primes over function fields and primes in short intervals over function fields, following recent works of Rudnick--Waxman and Keating--Rudnick, building on cohomological interpretations and determinations of monodromy groups by Katz. Our results are based on generic linear independence of Frobenius eigenvalues of $\ell$-adic representations, that we obtain from integral monodromy information via the strategy of Kowalski, which combines his large sieve for Frobenius with a method of Girstmair. An extension of the large sieve is given to handle wild ramification of sheaves on varieties.
\end{abstract}
\maketitle

\tableofcontents

\section{Introduction and statement of the results}
Throughout, $p$ will denote a prime larger than $5$ and $q$ a power of $p$.

\subsection{Kloosterman and Birch sums}\label{subsec:expSums}

For an integer $n\ge 1$, and $a\in\F_{q^n}^\times$, we consider the Kloosterman sums
\begin{equation}
  \label{eq:Klr}
  \Kl_{r,q^n}(a)=\frac{1}{q^{n(r-1)/2}}\sum_{\substack{x_1,\dots,x_r\in\F_{q^n}^\times\\ x_1\dots x_r=a}}  e \left(\frac{\tr(x_1+\dots+x_r)}{p}\right)
\end{equation}
of integer rank $r\ge 2$, as well as the Birch sums
\begin{equation}
  \label{eq:Bi}
  \Bi_{q^n}(a)=\frac{1}{q^{n/2}}\sum_{x\in\F_{q^n}^\times} e \left(\frac{\tr(ax+x^3)}{p}\right).
\end{equation}
  Here, we adopt the usual notation $e(z)=\exp(2\pi iz)$ for any $z\in\C$, and $\tr:\F_{q^n}\to\F_p$ is the field trace.\\

  For convenience, let us define the rank of $\Bi_{q^n}$ to be $r=2$, and for $r\ge 2$, we let
  \begin{equation}
    \label{eq:fpn}
    f_{q^n}=\Kl_{r,q^n} \quad (r\ge 2)\qquad\text{or}\qquad\Bi_{q^n} \quad (r=2)
  \end{equation}
  for every integer $n\ge 1$. By the Deligne--Katz equidistribution theorem \cite{KatzGKM} for Kloosterman sums and Livné's work \cite{Liv87} for Birch sums (see also \cite{KatzESDE}), as $q^n\to\infty$ the values
  \[\{f_{q^n}(a): a\in\F_{q^n}^\times\} \ \text{ equidistribute in } \ \Omega_r=\begin{cases}
      [-r,r]\subset\R&:r\text{ even}\\
      \{z\in\C : |z|\le r\}&:r\text{ odd},
    \end{cases}\]
  with respect to the pushforward $\tr_*\mu_r$ of the Haar measure $\mu_r$ on the compact group
  \[G_r(\C),\quad\text{where}\quad G_r:=
    \begin{cases}
      \SU_r&:r\text{ odd}\\
      \USp_r&:r\text{ even}
    \end{cases}
\]
(e.g. the Sato-Tate measure when $r=2$). These statements encompass bounds on $f_{q^n}$ (e.g. Deligne's bound for hyper-Kloosterman sums), and the fact that $f_{q^n}$ is real-valued whenever $r$ is even. Moreover, they can alternatively be phrased as properties of the ``angles'' of Kloosterman and Birch sums, i.e. the
\begin{eqnarray}
  \theta_{1,f,q}(x), \dots, \theta_{r,f,q}(x)\in[0,1],\text{ such that}\nonumber\\
  f_{q^n}(x)=\sum_{i=1}^r e\left(n\theta_{i,f,q}(x)\right)\qquad \text{ for all }n\ge 1, \ x\in\F_q^\times\label{eq:rootsExpSums}
\end{eqnarray}
(whose existence follows from deep work of Grothendieck, Deligne, Katz and others, and will be recalled in due time): they are distributed like the eigenvalues of a Haar-random matrix in $G_r(\C)$.\\

Our first main result is the following generic linear independence statement:
\begin{theorem}[Generic pairwise linear independence]\label{thm:linIndepExp}
  For $r\ge 2$ fixed, let $f$ be as in \eqref{eq:fpn}, and let
  \[E_r:=\dim G_r+\frac{\rank G_r}{2}
    =\begin{cases}
      \frac{2r^2+r-3}{2}&:r\text{ odd}\\
      \frac{2r^2+3r}{4}&:r\text{ even}.
    \end{cases}\]
  For almost all $a,b\in\F_q^\times$, that is for
  \begin{equation}
    \label{eq:almostall}
    (q-1)^2 \left[1+O_{r,p} \left(\frac{\log{q}}{q^{1/(2E_r)}}\right)\right]=(q-1)^2\left(1+o_{r,p}(1)\right)
  \end{equation}
  of them, the angles
  \begin{equation}
    \label{eq:anglesPairwise}
    1, \hspace{0.4cm} \theta_{j,f,q}(a), \hspace{0.4cm} \theta_{j,f,q}(b) \quad\text{with}\quad
\begin{cases}
  1\le j\le r-1&:r\text{ odd}\\
  1\le j\le r/2&:r\text{ even}
\end{cases}
  \end{equation}
  are $\Q$-linearly independent. The implied constants depend only on $r,p$, and only on $r$ in the case of Kloosterman sums.
\end{theorem}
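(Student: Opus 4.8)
The plan is to deduce the generic linear independence statement from integral monodromy information about the $\ell$-adic sheaf $\mathcal{F}$ whose Frobenius eigenvalues (at a point $a \in \F_q^\times$) encode the angles $\theta_{i,f,q}(a)$ via \eqref{eq:rootsExpSums}. By the work of Katz, the geometric and arithmetic monodromy groups of $\mathcal{F}$ as a sheaf on $\mathbb{G}_m / \F_q$ are both the full group $G_r = \SU_r$ or $\USp_r$, and moreover the integral $\ell$-adic monodromy is large — it contains (a conjugate of) $\SL_r(\Z_\ell)$ or $\Sp_r(\Z_\ell)$ for $\ell$ in a density-one set of primes, or at least for one well-chosen $\ell$. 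This is precisely the input needed for Kowalski's strategy combining the large sieve for Frobenius with Girstmair's method.

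The key steps, in order, are as follows. First, I would form the parameter space of \emph{pairs} $(a,b) \in (\mathbb{G}_m \times \mathbb{G}_m)(\F_q)$ with $a \neq b$, and consider the sheaf $\mathcal{F} \boxtimes \mathcal{F}$ (or rather its restriction away from the diagonal), whose Frobenius conjugacy class at $(a,b)$ has eigenvalue angles $\theta_{j,f,q}(a)$ and $\theta_{j,f,q}(b)$. The independence of monodromy of the two factors — again due to Katz, via a Goursat–Kolchin–Ribet argument, using that the two pullbacks are not geometrically isomorphic up to twist for $a \neq b$ — gives that the monodromy of the product on the complement of the diagonal is $G_r \times G_r$, with correspondingly large integral monodromy $H \supseteq \SL_r(\Z_\ell)^2$ or $\Sp_r(\Z_\ell)^2$. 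Second, a $\Q$-linear relation of the shape $\sum_j (c_j \theta_{j,f,q}(a) + d_j \theta_{j,f,q}(b)) \in \Z$ with the $c_j, d_j$ bounded (Girstmair's observation reduces the general relation to finitely many such "primitive" relations, with explicit bounds on the coefficients in terms of $r$) means that the Frobenius conjugacy class at $(a,b)$, viewed in $H$, avoids a proper subvariety $Y$ of $G_r \times G_r$ cut out by a multiplicative relation among eigenvalues. Third, I would check that $Y$ is a proper closed subscheme stable under conjugation and of codimension at least one — in fact one computes, as in the definition of $E_r$, that the relevant "exceptional" set in the group has codimension exactly governed by $\rank G_r / 2$ on each factor, giving $E_r = \dim G_r + \rank G_r / 2$ as the exponent controlling the sieve. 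Fourth, I would apply Kowalski's large sieve for Frobenius over the variety $\mathbb{G}_m \times \mathbb{G}_m \setminus \Delta$ of dimension $2$: the number of $(a,b)$ for which the Frobenius class lands in such an exceptional set is $O(q^2 / q^{1/(2E_r)})$ up to logarithmic factors, the logarithm coming from summing over the (polynomially many) primitive relations and over the auxiliary primes $\ell$ used in the sieve. Since there are $(q-1)^2$ pairs total, this gives exactly the error term in \eqref{eq:almostall}.

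The main obstacle I expect is twofold. The more serious difficulty is the \emph{wild ramification}: the Kloosterman and Birch sheaves are wildly ramified at $\infty$ (and $\mathcal{F} \boxtimes \mathcal{F}$ on a surface inherits wild ramification along the boundary divisors), and Kowalski's large sieve for Frobenius in its original form is stated for sheaves that are at worst tamely ramified, or on curves; so one genuinely needs the "extension of the large sieve to handle wild ramification of sheaves on varieties" promised in the abstract, with the conductor (Swan conductors included) entering the constants. Controlling that conductor for the product sheaf on the surface, uniformly in $q$, is where the real work lies, and it is the reason the implied constant depends on $p$ for Birch sums (whose wild ramification depends on $p$) but only on $r$ for Kloosterman sums. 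The secondary, more routine, obstacle is verifying that the integral monodromy really is all of $\SL_r(\Z_\ell)$ (resp. $\Sp_r(\Z_\ell)$) — i.e. surjectivity mod $\ell$ and then lifting — which for these classical sheaves is known (Katz, and refinements), but must be cited in the precise form needed, including the independence of the two factors in the product.
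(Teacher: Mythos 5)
Your high-level recipe (integral monodromy $\to$ large sieve for Frobenius $\to$ Girstmair) is the right one, but three of your detailed mechanisms are wrong, and one of them undercuts your identification of the main obstacle. Most importantly, for Kloosterman and Birch sums there is no wild-ramification-on-varieties issue at all: the sheaves $\Klc_{r,\lambda}$ and $\Bic_\lambda$ live on $\G_m$, a \emph{curve}, and they form compatible systems, so the curve case of Kowalski's original large sieve for Frobenius (which requires no tameness hypothesis) applies verbatim. The extension of the large sieve to handle wild ramification on higher-dimensional varieties is developed in the paper for a different reason, namely Theorems~\ref{thm:genericLIGP} and~\ref{thm:genericLIGPchi}, where the parameter spaces $\Prim_{k\codd}$ and $\Prim_m$ are not curves. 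Relatedly, you do not need Goursat--Kolchin--Ribet: since one sieves over $\G_m^t$ (a product base) rather than pulling back $\Fc$ along several maps from a single $\G_m$, the surjectivity of $\pi_1(\G_m^t)\to\pi_1(\G_m)^t$ (Kowalski's Lemma~5.1 in \cite{Kow08}) already forces the monodromy of $\Fc_\lambda^{\boxtimes t}$ to equal $G(\F_\lambda)^t$. There is no diagonal to remove, and no geometric-isomorphism hypothesis to verify.

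Your account of the $p$-dependence is also inverted. The conductors of both the Kloosterman and Birch sheaves are bounded in terms of $r$ alone, uniformly in $p$ (via \cite[Theorem~4.1.1]{KatzGKM} and \cite[Chapter~7]{KatzESDE}); the wild ramification at $\infty$ is controlled uniformly. The actual source of $p$-dependence for Birch sums is the set $\Lambda$ of auxiliary valuations $\lambda$ at which the mod-$\lambda$ integral monodromy is full: for Kloosterman sheaves one has a $p$-uniform, explicit determination (Hall, Yu, and \cite{PGIntMonKS16}), whereas for Birch sheaves only the Katz/Larsen--Pink density-one argument is available, and its lower bound on the density of good $\lambda$ may depend on $p$. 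Finally, two smaller points: the exponent $E_r=\dim G_r+\tfrac{1}{2}\rank G_r$ does not come from the codimension of an exceptional subvariety of $G_r\times G_r$; in the curve case it arises from bounding, inside the large sieve inequality, the number of irreducible representations of $G(\F_\lambda)$ (which is $\ll\ell^{\rank G}$) and their maximal dimensions (which are $\ll\ell^{(\dim G-\rank G)/2}$). And Girstmair's input is not a finiteness-of-primitive-relations reduction feeding a sieve on relations; it is a classification showing that if the splitting field of $\prod_i\det(1-T\,\Frob_{x_i,q})$ has maximal Galois group ($\Sf_r^t$ or $W_r^t$), then the only multiplicative relations among the roots are the forced ones. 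The sieve is therefore used to show the Galois group is generically maximal (via the Chavdarov-type local conditions of Proposition~\ref{prop:detectNonmax}), and Girstmair converts maximality into linear independence.
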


\begin{remark}
  The restriction on $j$ in \eqref{eq:anglesPairwise} is necessary since $\sum_{j=1}^r \theta_{j,f,q}(x)=0$, and if $r$ is even, the angles come by pairs: $\theta_{r/2+j,f,q}(x)=-\theta_{j,f,q}(x)$ ($1\le j\le r/2$).
\end{remark}
\begin{remark}
Actually, we will more generally prove Theorem \ref{thm:linIndepExp} for almost all tuples of $t\ge 1$ arguments, when $t=o(\sqrt{\log{q}})$ (e.g. $t$ fixed), with \eqref{eq:almostall} replaced by\footnote{Here and from now on, $\delta_B$ will denote the Kronecker symbol with respect to a binary variable $B$, i.e. $\delta_B=1$ if $B$ is true, $0$ otherwise. In particular, $r^{\delta_{r\text{ odd}}}$ is equal to $r$ if the latter is odd, and to $1$ otherwise.}
\begin{equation}
  \label{eq:almostall2}
  (q-1)^t \left(1+ O_{r,p} \left(\frac{(r^{\delta_{r\text{ odd}}}C)^t\log{q}}{q^{1/(tE_r)}}\right)\right)
\end{equation}
for an absolute constant $C\ge 1$. The implied constants depends again only on $r$ in the case of Kloosterman sums.
\end{remark}

This has several interesting consequences. First, we obtain the \emph{joint distribution} of almost all pairs of values of $f$ in extensions of a fixed base field:


\begin{corollary}\label{cor:fUniform}
  For $r\ge 2$, let $f$ be as in \eqref{eq:fpn}, a Kloosterman or Birch sum. For all but 
  $O_{r,p} \left((q-1)^2(\log{q})q^{-1/(2E_r)}\right)$
  couples $a,b\in\F_q^\times$, the random vector
  \[X_{a,b}=\Big(\left(f_{q^n}(a),f_{q^n}(b)\right)\Big)_{1\le n\le N}\]
  \textup{(}with the uniform measure on $[1,N]\cap\N$\textup{)} converges in law as $N\to\infty$ to
  \[\Big(\tr(g_1),\tr(g_2)\Big),\]
  with $g_1,g_2$ independent uniformly distributed in a maximal torus of $G_r(\C)$. Explicitly, $\tr(g_i)$ is distributed like
  \begin{equation}
    \label{eq:explicittrg}
  \begin{cases}
      \sum_{j=1}^{r/2}2\cos(2\pi\theta_j)&:r\text{ even}\\
      \sum_{j=1}^{r-1}e(\theta_j)+e(-\sum_{j=1}^{r-1}\theta_j)&:r\text{ odd},
    \end{cases}  
  \end{equation}
with $\theta_j$ independent uniform in $[0,1]$. Equivalently, the distribution of $\tr(g_i)$ is that of $\tr(h_i^m)$ for any $m\ge r$ and $h_i$ uniform in $G_r(\C)$ with respect to the Haar measure. The implied constant in Landau's notation depends only on $r$ in the case of Kloosterman sums.
\end{corollary}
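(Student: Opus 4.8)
The plan is to deduce the corollary directly from Theorem~\ref{thm:linIndepExp}, applied with $t=2$ (which certainly satisfies $t=o(\sqrt{\log q})$), combined with the Kronecker--Weyl equidistribution criterion. Call a couple $(a,b)\in(\F_q^\times)^2$ \emph{good} if the angles listed in \eqref{eq:anglesPairwise} together with $1$ are $\Q$-linearly independent; by Theorem~\ref{thm:linIndepExp} in its tuple form \eqref{eq:almostall2} (specialised to $t=2$) the bad couples number $O_{r,p}\big((q-1)^2(\log q)q^{-1/(2E_r)}\big)$, with the $p$-dependence dropped for Kloosterman sums, which is exactly the claimed exceptional set. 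Fix a good couple $(a,b)$, set $d=2(r-1)$ if $r$ is odd and $d=r=2\cdot(r/2)$ if $r$ is even, and let $\underline\alpha=(\alpha_1,\dots,\alpha_d)\in\R^d$ collect the angles $\theta_{j,f,q}(a)$ and $\theta_{j,f,q}(b)$ appearing in \eqref{eq:anglesPairwise}. Since $1,\alpha_1,\dots,\alpha_d$ are $\Q$-linearly independent, the Kronecker--Weyl theorem gives that the sequence $\big(n\underline\alpha \bmod 1\big)_{n\ge1}$ is equidistributed in the torus $\mathbf{T}=(\R/\Z)^d$ for its Haar (Lebesgue) measure.

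Next I would realise $X_{a,b}$ as the image of this sequence under a fixed continuous map. Using \eqref{eq:rootsExpSums} together with the relations recalled just after Theorem~\ref{thm:linIndepExp} --- namely $\theta_{r/2+j,f,q}(x)=-\theta_{j,f,q}(x)$ for $r$ even and $\theta_{r,f,q}(x)=-\sum_{j=1}^{r-1}\theta_{j,f,q}(x)$ for $r$ odd --- one builds a continuous map $\Phi\colon\mathbf{T}\to\Omega_r\times\Omega_r$, independent of $n$ and of $(a,b)$, with $\Phi(n\underline\alpha \bmod 1)=\big(f_{q^n}(a),f_{q^n}(b)\big)$ for every $n\ge1$. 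Equidistribution of sequences is preserved under pushforward by continuous maps, so $X_{a,b}$ converges in law, as $N\to\infty$, to $\Phi_*(\text{Haar on }\mathbf{T})$. Inspecting $\Phi$ factor by factor, $\mathbf{T}$ splits as $T\times T$ in the standard eigenangle coordinates of a maximal torus $T\subset G_r(\C)$, the two blocks of angles are i.i.d.\ uniform, and $\Phi_*(\text{Haar})$ is precisely the law of $(\tr(g_1),\tr(g_2))$ with $g_1,g_2$ independent and Haar-uniform in $T$; evaluating $\tr$ in those coordinates --- eigenvalues $e(\pm\theta_j)$, $1\le j\le r/2$, for $\USp_r$, respectively $e(\theta_1),\dots,e(\theta_{r-1}),e(-\sum_j\theta_j)$ for $\SU_r$, with the $\theta_j$ uniform in $[0,1]$ --- gives \eqref{eq:explicittrg}.

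For the final assertion I would invoke the classical description of high powers of Haar-random elements of a compact Lie group (Rains for $U(N)$, with the analogues for $\SU_r$ and $\USp_r$; this is also used in Katz's equidistribution monographs): if $h$ is Haar-random in $G_r(\C)$ with eigenangles $(\phi_j)$, then for $m\ge r$ the angles $(m\phi_j \bmod 1)$ are independent and uniform subject only to the group constraint (sum zero for $\SU_r$, central symmetry for $\USp_r$), i.e.\ $h^m$ is, up to conjugacy, Haar-uniform on $T$. Hence $\tr(h^m)$ has the law of $\tr(g_i)$ above for every $m\ge r$, completing the proof. The only substantial input is Theorem~\ref{thm:linIndepExp}; the rest is the Kronecker--Weyl criterion plus bookkeeping, so the steps needing genuine care are merely checking that $\Phi$ is well defined and continuous and citing the correct form of the power-map statement --- there is no real obstacle beyond the theorem itself.
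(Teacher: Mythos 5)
Your argument is correct and is essentially the paper's proof: Theorem~\ref{thm:linIndepExp} with $t=2$ supplies the exceptional set, Kronecker--Weyl gives equidistribution of the angle sequence on the torus, pushing forward by the trace map (via \eqref{eq:rootsExpSums} and the angle relations) yields the limit law, and \cite[Theorem 2.1]{Rains97} gives the power-map equivalence. Your phrasing at the equidistribution step, keeping track of only the $d$ independent angle coordinates rather than all $2r$, is in fact slightly more careful than the paper's.
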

\begin{figure}[h!]
  \centering
  \begin{equation*}
    \xymatrix@R=0.2cm{
      \F_p^\times\text{ (fixed) }\ar[r]&\F_{q}^\times\ar[r]&\F_{q^n}^\times\ar[r]&\bigcup_{m\ge 1} \F_{q^m}^\times=\overline\F_q^\times\\
    &a,b\ar@{}[u]|-*[@]{\rotatebox{90}{$\in$}}&&}
    \end{equation*}
  \caption{The asymptotic setting for Section \ref{subsec:expSums}.}
\end{figure}

\begin{remark}
  Applying Deligne's equidistribution theorem and \cite{KatzGKM,KatzESDE} would show that $\Big(f_{q^n}(a+b_1),\dots,f_{q^n}(a+b_t)\Big)_{\substack{a\in\F_{q^n}, \ a+b_i\neq 0}}$ converges in law (with respect to the uniform measure), as $q^n\to\infty$, to a random vector in $\Omega_r^t$ distributed with respect to the product measure $(\tr_*\mu_r)^{\otimes t}$, when $b_i\in\F_{q^n}$ are $t=o(\log(q^n))$ distinct shifts (see e.g. \cite{PG16}, where the dependencies of the errors from \cite{FKMSumProducts} with respect to $t$ are made explicit). However, this only gives information among values that are explicitly related, by fixed shifts.
\end{remark}
\begin{remark}[Discrepancy]
  For the distribution of a single Kloosterman sum of rank 2, conditionally on a linear independence hypothesis, Ahmadi and Shparlinski \cite{AhmSphar10} also obtained bounds on the discrepancy, using lower bounds arising from Baker's theorem. Their results are stated for curves, but the last paragraph of \cite[Section 5.2]{AhmSphar10} explains how they readily extend to Kloosterman sums. Our Theorem \ref{thm:linIndepExp} shows that their discrepancy bounds hold for almost all arguments, and using the same technique, a bound on the discrepancy in Corollary \ref{cor:fUniform} could as well be given.
\end{remark}

Another corollary is the following absence of bias among values of Birch sums and Kloosterman sums in extensions:
\begin{corollary}\label{cor:fBias}
  Let $f_{q^n}$ be either $\Kl_{r,q^n}$ \textup{(}$r\ge 2$ even\textup{)}, $\Bi_{q^n}$, or -- if $r\ge 3$ is odd -- $\Re\Kl_{r,q^n}$ or $\Im\Kl_{r,q^n}$. For all but $O_{r,p} \left((q-1)^2(\log{q})q^{-1/(2E_r)}\right)$
  couples $a,b\in\F_q^\times$, we have
  \begin{eqnarray*}
    \P_{n\le N}\Big(f_{q^n}(a)<f_{q^n}(b)\Big)&:=&\frac{|\{1\le n\le N : f_{q^n}(a)<f_{q^n}(b)\}|}{N}\\
                                                               &\to& 1/2 \quad\text{as }N\to\infty.
  \end{eqnarray*}
  The implied constant in Landau's notation depends only on $r$ in the case of Kloosterman sums.
\end{corollary}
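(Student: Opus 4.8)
The plan is to deduce this from Corollary~\ref{cor:fUniform} (equivalently, from Theorem~\ref{thm:linIndepExp} combined with Weyl's equidistribution criterion) together with the portmanteau theorem. The argument is almost entirely formal: the only point requiring a genuine — if standard — argument, and the one I expect to be the main obstacle, is that the limiting distribution of the pair $\bigl(f_{q^n}(a),f_{q^n}(b)\bigr)$ puts no mass on the diagonal of $\R^2$, so that one can separate "$<$" from "$=$".

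I would start from a couple $(a,b)\in(\F_q^\times)^2$ outside the exceptional set of size $O_{r,p}\bigl((q-1)^2(\log q)q^{-1/(2E_r)}\bigr)$ supplied by Corollary~\ref{cor:fUniform} — the same set as in Theorem~\ref{thm:linIndepExp}: $\Q$-linear independence of $1,\theta_{j,f,q}(a),\theta_{j,f,q}(b)$ for $j$ in the range of \eqref{eq:anglesPairwise} means that no nontrivial integer combination of the angles is an integer, so by Weyl's criterion the points $\bigl((n\theta_{j,f,q}(a))_j,(n\theta_{j,f,q}(b))_j\bigr)\bmod\Z$, $n\ge1$, equidistribute in the ambient torus, which is the input of Corollary~\ref{cor:fUniform}. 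For such $(a,b)$, the random variable $(\Kl_{r,q^n}(a),\Kl_{r,q^n}(b))$ (with $n$ uniform in $\{1,\dots,N\}$) converges in law, as $N\to\infty$, to $(\tr(g_1),\tr(g_2))$ with $g_1,g_2$ independent and Haar-distributed in a maximal torus of $G_r(\C)$; applying the continuous map $\Re$, $\Im$, or the identity (the last when $r$ is even, where everything is already real), I obtain that $\bigl(f_{q^n}(a),f_{q^n}(b)\bigr)$ converges in law to a pair $(Y_1,Y_2)$ of i.i.d. real random variables whose common law $\mu$ is, by \eqref{eq:explicittrg}, the pushforward of Haar measure on the torus $(\R/\Z)^s$ ($s=r/2$ for $r$ even, $s=r-1$ for $r$ odd) under the real-analytic map $\Psi$, where $\Psi(\underline\theta)=\sum_{j=1}^{r/2}2\cos(2\pi\theta_j)$ for $r$ even and $\Psi$ is the real or imaginary part of $\sum_{j=1}^{r-1}e(\theta_j)+e(-\sum_j\theta_j)$ for $r$ odd.

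It remains — and this must be run uniformly over $\Kl_r$ ($r$ even), $\Bi$, and $\Re\Kl_r,\Im\Kl_r$ ($r$ odd) — to show $\P(Y_1=Y_2)=0$. By independence and Fubini, $\P(Y_1=Y_2)=\int_\R\mu(\{c\})\,d\mu(c)$, so it suffices that $\mu$ be atomless; and $\mu(\{c\})=0$ for every $c$ because $\Psi^{-1}(c)$ is the zero locus of the real-analytic, not identically zero function $\underline\theta\mapsto\Psi(\underline\theta)-c$ on the connected torus, hence is Lebesgue-null. (Alternatively, in the even case $\Psi(U)$ is a sum of independent absolutely continuous variables, and in the odd case one conditions on a single angle.) Granting this, $\{x<y\}$ is open with boundary contained in the diagonal $D=\{(x,y)\in\R^2:x=y\}$, which is $\mu\otimes\mu$-null, so the portmanteau theorem gives $\P_{n\le N}\bigl(f_{q^n}(a)<f_{q^n}(b)\bigr)\to\P(Y_1<Y_2)$ as $N\to\infty$; since $Y_1,Y_2$ are i.i.d., $\P(Y_1<Y_2)=\P(Y_1>Y_2)$, and these equal quantities sum to $1-\P(Y_1=Y_2)=1$, so each equals $1/2$. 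The uniformity assertion — implied constant depending only on $r$ in the Kloosterman case — is inherited from Theorem~\ref{thm:linIndepExp} via Corollary~\ref{cor:fUniform}.
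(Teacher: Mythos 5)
Your proof is correct and follows the same route as the paper's, which condenses the whole argument into one sentence: apply the portmanteau theorem to $\tr(g_1)-\tr(g_2)$ (or its real/imaginary parts), which is symmetric about mean $0$. You have, usefully, made explicit the step the paper glosses over: for the portmanteau argument to give the exact limit $1/2$ (rather than merely $\liminf,\limsup$ bounds), one needs the diagonal $\{x=y\}$, equivalently the point $\{0\}$ for $\tr(g_1)-\tr(g_2)$, to be a continuity set, i.e.\ one needs the common law $\mu$ to be atomless — your real-analyticity argument (a non-constant real-analytic function on a connected torus has Lebesgue-null level sets) settles this cleanly. So the submission fills in a small but genuine lacuna in the paper's one-line proof while otherwise matching it.
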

Finally, Theorem \ref{thm:linIndepExp} also yields the following lower bounds, through the method of Bombieri and Katz \cite{BombKatz10}. The first one is not explicit and the value of $n$ is not effective, while the second is weaker but does not suffer from these issues.

\begin{corollary}\label{cor:lowerBounds}
  For $r\ge 2$, let $f$ be as in \eqref{eq:fpn}. For every $\varepsilon>0$ and all but $O_{r,p} \left((q-1)^2(\log{q})q^{-1/(2E_r)}\right)$
  couples $a,b\in\F_q^\times$, we have:
  \begin{enumerate}
  \item\label{item:lowerBounds:subspace} for every $n$ large enough (with respect to $q,r,\varepsilon,a,b$),
    \[|f_{q^n}(a)-f_{q^n}(b)|\ge q^{-\varepsilon n(r-1)}.\]
  \item\label{item:lowerBounds:BW} when $r=2$, for every $n\ge 1$ large enough with respect to $p$,
    \[|f_{q^n}(a)-f_{q^n}(b)|\ge
      (2/\pi^2)\begin{cases}
        q^{-2^{26}3^3\pi p^{3}\log(4p)\log(2n+1/2)}\\
        q^{-C_p\log \left(\frac{n}{e}+\frac{2n+1/2}{q}\right) \frac{\log{q}}{\max(\log{q},2)}}
      \end{cases}\]
    with $C_p=1175\left(5.205+0.946\log{\frac{p-1}{2}}\right)(p-1)^4$.
  \end{enumerate}
\end{corollary}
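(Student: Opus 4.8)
The plan is to apply the method of Bombieri--Katz \cite{BombKatz10} on top of the generic linear independence furnished by Theorem \ref{thm:linIndepExp}, separately for the two parts. Fix $a,b\in\F_q^\times$ among the almost-all couples for which the angles $1,\theta_{j,f,q}(a),\theta_{j,f,q}(b)$ (in the range of $j$ from \eqref{eq:anglesPairwise}) are $\Q$-linearly independent; by \eqref{eq:almostall} the bad couples number $O_{r,p}((q-1)^2(\log q)q^{-1/(2E_r)})$, matching the claimed count. Writing $f_{q^n}(a)-f_{q^n}(b)=\sum_{i=1}^r e(n\theta_{i,f,q}(a))-\sum_{i=1}^r e(n\theta_{i,f,q}(b))$, this is a linear form (with coefficients $\pm1$, or $\pm1/2$ after taking real/imaginary parts in the odd case) in the quantities $e(n\phi)$ where $\phi$ runs over the at most $2r$ angles, which are all among $\{\theta_{j}(a),\theta_j(b), -\sum_j\theta_j(a), -\sum_j\theta_j(b)\}$ and hence $\Q$-linearly independent together with $1$ once we restrict to the independent generators. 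The key input is that such an exponential sum cannot be too small too often: this is precisely the content of the subspace-theorem argument in \cite{BombKatz10}.

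For part \ref{item:lowerBounds:subspace}, I would invoke the Bombieri--Katz lower bound that uses Schmidt's subspace theorem (or the relevant $S$-unit / linear-forms-in-logarithms consequence): given fixed reals $\theta_1,\dots,\theta_m$ with $1,\theta_1,\dots,\theta_m$ linearly independent over $\Q$, for any $\varepsilon>0$ one has $\big|\sum c_i e(n\theta_i)\big|\ge e^{-\varepsilon n}$ for all $n$ sufficiently large, with the threshold on $n$ depending ineffectively on everything (the ineffectivity is inherited from the subspace theorem). Applying this with the $c_i$ the integer (or half-integer) combination coefficients coming from $f_{q^n}(a)-f_{q^n}(b)$, and rescaling $\varepsilon$ by a factor involving $\log q$ and $r-1$ so that $e^{-\varepsilon n}$ becomes $q^{-\varepsilon n(r-1)}$, gives the stated bound; the dependence of the threshold on $q,r,\varepsilon,a,b$ and its non-effectivity are exactly as predicted. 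One must check that the coefficients are not all zero and that the form does not collapse — this is guaranteed because $\theta_j(a)\neq\theta_j(b)$ for the relevant $j$ (a consequence of linear independence), so $f_{q^n}(a)\not\equiv f_{q^n}(b)$ identically.

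For part \ref{item:lowerBounds:BW}, restricted to $r=2$, I would instead use the effective estimate of Baker--Wüstholz type (as quantified in \cite{BombKatz10}, whence the explicit constants $2^{26}3^3$, the factor $\pi p^3\log(4p)$, and the alternative bound with $C_p=1175(5.205+0.946\log\frac{p-1}{2})(p-1)^4$). Here $f_{q^n}(a)-f_{q^n}(b)=2\cos(2\pi n\theta_1(a))-2\cos(2\pi n\theta_1(b))$, and one bounds $|e(n\theta_1(a))-e(n\theta_1(b))|$ and a companion term from below via linear forms in two logarithms with algebraic arguments; the height of the relevant algebraic numbers (roots of the $L$-function of a rank-$2$ sheaf over $\F_q$, so of degree and height controlled by $p$ and $q$) enters through the $(p-1)^4$ and $\log\frac{p-1}{2}$ factors, and the $\log(\frac{n}{e}+\frac{2n+1/2}{q})$ and $\log(2n+1/2)$ terms are the usual $\log$ of the coefficient-plus-degree quantity. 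The prefactor $2/\pi^2$ comes from converting $|e(\beta)-e(\alpha)|\ge$ (something) into a bound on $|\cos(2\pi\beta)-\cos(2\pi\alpha)|=2|\sin(\pi(\alpha+\beta))\sin(\pi(\alpha-\beta))|$ using $|\sin\pi x|\ge 2\|x\|$ and then $\|x\|\ge |e(x)-1|/\pi$ or similar elementary inequalities.

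The main obstacle is bookkeeping rather than conceptual: one has to track precisely how the arithmetic of the situation (the angles are arguments of Weil numbers attached to a specific $\ell$-adic sheaf over $\F_q$, so their "heights" as algebraic numbers are governed by $q$ and $p$ and the rank $r$) feeds into the quantitative inputs of \cite{BombKatz10}, so that the exponents $q^{-\varepsilon n(r-1)}$ in part \ref{item:lowerBounds:subspace} and the explicit $p$-dependence in part \ref{item:lowerBounds:BW} come out as stated. In particular, the passage from "linearly independent reals" to "linear form in logarithms of algebraic numbers" requires recalling (from the forthcoming cohomological discussion) that $e(\theta_{i,f,q}(x))$ are algebraic of controlled height — at which point both parts are essentially a citation of the corresponding results in \cite{BombKatz10}, with the only genuine work being the reduction of $f_{q^n}(a)-f_{q^n}(b)$ to the normalized shape those results require and the verification that Theorem \ref{thm:linIndepExp} supplies the hypothesis for all but the claimed number of couples.
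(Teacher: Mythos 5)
Your proposal follows the same route as the paper (invoking the Bombieri--Katz method on top of Theorem~\ref{thm:linIndepExp}), and part~\ref{item:lowerBounds:BW} is reconstructed essentially faithfully: the trigonometric factoring
\[
|\cos(2n\pi\theta_0)-\cos(2n\pi\theta_1)|=2|\sin(n\pi(\theta_0+\theta_1))\sin(n\pi(\theta_0-\theta_1))|,
\]
the lower bound $|\sin\pi x|\ge 2\|x\|$, the reduction to linear forms in two logarithms with algebraic arguments of height controlled by $q$ and degree controlled by $p$, and the application of Baker--W\"ustholz (resp.\ Gouillon) to obtain the two displayed estimates, including the $2/\pi^2$ prefactor, are exactly what the paper does.

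There is, however, a genuine gap in your treatment of part~\ref{item:lowerBounds:subspace}. The subspace-theorem input (Evertse, van der Poorten--Schlickewei, as packaged in \cite[Theorem~3.1]{BombKatz10}) gives, for each sufficiently large $n$, the dichotomy that \emph{either} the de-normalized sum $q^{n(r-1)/2}F(n)$ vanishes \emph{or} it is at least as large as claimed. It does not by itself forbid $F(n)=0$ for individual $n$. Your check that ``the form does not collapse'' only rules out $F\equiv 0$; it does not rule out sporadic zeros of the exponential polynomial. The paper fills this hole by invoking the Skolem--Mahler--Lech theorem (via \cite[Theorem~2.1(i)]{BombKatz10}): since the linear independence of the angles guarantees that no relevant ratio of the $e(\theta_{i,f,q}(x))$ is a root of unity, the zero set $\{n: F(n)=0\}$ is finite. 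Without this step the stated conclusion (a lower bound for \emph{all} large $n$) does not follow. A related minor point: the general statement you formulate (linear independence of $1,\theta_1,\dots,\theta_m$ over $\Q$ implies $|\sum c_i e(n\theta_i)|\ge e^{-\varepsilon n}$ for large $n$) is not true for arbitrary reals; it requires that the $e(\theta_i)$ be algebraic, which holds here because they are Weil numbers, but should be made explicit since both Skolem--Mahler--Lech-with-subspace and the effective bounds depend on algebraicity and height.
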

\begin{remark}
  The second bound in \ref{item:lowerBounds:BW} uses Gouillon's improvement \cite{Gou06} on the Baker--Wüstholz theorem \cite{BakWu93} instead of the latter. The condition on $n$ is only to simplify the expression above: the bound in the proof is fully explicit. Moreover, the first inequality in \ref{item:lowerBounds:BW} is valid for any $n\ge 1$. We can also update the lower bound of Bombieri--Katz \cite[Corollary 4.3(ii)]{BombKatz10} to (assuming $p\ge 5$):
  \[|\Kl_{r,p^n}(a)|\ge (2/\pi)q^{-2C_p\log \left(\frac{n}{e}+\frac{4n+1}{q}\right) \frac{\log{q}}{\max(\log{q},2)}},\]
  with $C_p$ as above.
\end{remark}

\subsection{Angles of Gaussian primes over function fields}\label{subsec:anglesGaussian}

Recently, Rudnick and Waxman \cite{RudWax17} studied refined statistics of angles of Gaussian primes $p=a+ib\in\Z[i]$, after Hecke's equidistribution result and the works that ensued. To give motivation for a conjecture they propose, they develop a function field model where an analogue holds unconditionally.

Explicitly (see \cite[Section 1.3, Section 6]{RudWax17}), consider the quadratic extension $\F_q(S)$ of the function field $\F_p(T)$, $S=\sqrt{-T}$, with the norm $N(f(S))=f(S)f(-S)$. The analogue of the unit circle is
\[\Sb^1_q:=\{u\in\F_q[[S]]^\times : u(0)=1, \  N(u)=1\},\]
and we have a well-defined map $U: \F_q[S]\backslash\{0\}\to\Sb_q^1$, $f\mapsto f/\sqrt{N(f)}$, 
that actually only depends on the ideal $(f)$. For an integer $k\ge 1$, the ``circle'' $\Sb_q^1$ can be divided into $q^\kappa$ sectors ($\kappa=\floor{k/2}$), $\Sec(u,k):=\{v\in\Sb_q^1 : v\equiv u\pmod{S^k}\},$ which are parametrized by
\begin{equation}
  \label{eq:Sk1}
  u\in\Sb_{k,q}^1:=\{u\in R_{k,q}: u(0)=1, \ N(u)=1\}, \quad R_{k,q}:= \left(\F_q[S]/(S^k)\right)^\times.
\end{equation}
Rudnick and Waxman start by showing that if $k\le n$ and
\[N_{k,n}(u):=|\{\pf\normal\F_q[S]\text{ prime} : \deg(\pf)=n, \ U(\pf)\in \Sec(u,k)\}|\]
is the number of primes of fixed degree lying in a sector given by $u\in\Sb_{k,q}^1$, then there is equidistribution in the sectors whenever $\kappa<n/2$:
\[N_{k,n}(u)=\frac{|\{\pf\normal\F_q[S] \text{ prime} : \deg(\pf)=n\}|}{|\Sb_{k,q}^1|}+O \left(q^{n/2}\right)=\frac{q^n/n}{q^\kappa}+O \left(q^{n/2}\right),\]
with an absolute implied constant\footnote{The dependencies of the error with respect to $k$ are not explicit in \cite{RudWax17}, but keeping track of them during the arguments shows that the error in the expression for $N_{k,n}(u)$ above is $O \left(q^{n/2}\kappa/n+\tau(n)^{1/2}q^{n/2}/n\right)$ (recall that we assume that $p\ge 7$), where $\tau$ is the number of divisors function.}. Using a deep result of Katz \cite{Katz16} (based on Deligne's equidistribution theorem and the computation of a monodromy group), they then get an unconditional analogue \cite[Theorem 1.3]{RudWax17} of their conjecture for $\Z[i]$ \cite[Conjecture 1.2]{RudWax17} on the variance of $N_{k,n}$ among all sectors.\\

The notion of Chebyshev bias for primes in arithmetic progressions, studied in depth by Rubinstein and Sarnak \cite{RubSar94}, was extended to function fields by Cha \cite{Cha08}. Further cases of biases in function fields have been considered recently \cite{ChaKim10,ChaFioJou16,ChaFioJou16b,DevMeng18}, particularly in families of curves.

Similarly, one may ask whether there is a bias in the distribution of prime ideals among different sectors as above. To do so, for $u_1,\dots, u_R\in \Sb_{k,q}^1$ distinct, we may look at the $\R^R$-valued random vector
\begin{eqnarray*}
  X_{k,N}(\bs u)&:=&\left(X_{k,N}(u_1),\dots,X_{k,N}(u_R)\right),\text{ where}\\
  X_{k,N}(u_r)&:=&\left(\frac{q^{\kappa}n}{q^{n/2}}\left(N_{k,n}(u_r)-\frac{q^n/n}{q^{\kappa}}\right)\right)_{1\le n\le N}
\end{eqnarray*}
(with the uniform measure on $[1,N]\cap\N$). The normalization is chosen so that $X_{k,N}(u_r)$ is bounded as $N\to\infty$ (with $q,k$ fixed), which will be clear later on.\\

We recall that key inputs in \cite{RubSar94} and \cite{Cha08} to study biases finely are hypotheses about linear independence of roots of $L$-functions, also known as Grand Simplicity Hypotheses (GSH). These are very strong statements and wide open conjectures.

Our second main result is a generic linear independence statement in the setting above, in the same spirit as Theorem \ref{thm:linIndepExp}. It concerns roots
\begin{equation}
  \label{eq:eigenvaluesXi}
  e(\pm\theta_{\Xi,j})\qquad \left(1\le j\le d'(\Xi)\right), \quad \theta_{\Xi,j}\in[0,1],
\end{equation}
of (normalized) $L$-functions associated to characters $\Xi$ of $\Sb_{k,q}^1$ with conductor $3\le d(\Xi)\le 2\kappa-1$, where $d'(\Xi):=(d(\Xi)-1)/2$ (these will be defined more precisely in Section \ref{sec:anglesGP}). The analogue of GSH is:
\begin{hypothesis}\label{hyp:LI}
  The angles $\theta_{\Xi,j}$, for $\Xi\in\widehat\Sb_{k,q}^1$, $1\le j\le d'(\Xi)$, are $\Q$-linearly independent.
\end{hypothesis}
Towards Hypothesis \ref{hyp:LI}, we show:
\begin{theorem}[Generic linear independence]\label{thm:genericLIGP}
  Assume that $p>k$ and let $t=o(\log|\Sb_{k,q}^1|)$ \textup{(}e.g. $t$ fixed\textup{)}. For almost all subsets $S\subset\hat\Sb_{k,q}^1$ of size $t$, that is for
    \[\binom{q^\kappa}{t}\left(1+O_{k,p} \left(\frac{C_{k,p}^t\log{q}}{q^{1/(2t(2\kappa^2-3\kappa+1))}}\right)\right)=\binom{q^\kappa}{t}(1+o_{k,p}(1))\]
  of them, with $C_{k,p}\ge 1$ depending only on $k,p$, the elements
  \[1, \qquad \theta_{\Xi,j} \qquad \left(\Xi\in S, \ 1\le j\le d'(\Xi)\right)\]
  are $\Q$-linearly independent.
  \end{theorem}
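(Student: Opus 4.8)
The plan is to follow the same route as for Theorem~\ref{thm:linIndepExp}: realise the angles $\theta_{\Xi,j}$ as the Frobenius eigenvalue angles of an $\ell$-adic local system over a variety over $\F_q$ with maximal integral monodromy, and then run Kowalski's large sieve for Frobenius together with Girstmair's method to discard the characters for which $1$ and the $\theta_{\Xi,j}$ satisfy a nontrivial $\Q$-linear relation. First I would set up the relevant family. For a fixed odd conductor $d$ with $3\le d\le 2\kappa-1$, writing $d'=(d-1)/2$, the primitive characters of $\Sb_{k,q}^1$ of conductor $d$ form the $\F_q$-points of an open dense subvariety $\mathcal U_d$ of an affine space over $\F_q$ (via Artin--Schreier(--Witt) duality, as in the set-up of Katz~\cite{Katz16} used in \cite{RudWax17}; the complement is of smaller dimension and negligible for the count), and over $\mathcal U_d$ there is a lisse $\overline{\Q}_\ell$-sheaf $\mathcal F_d$, pure of weight $0$ in the analytic normalisation and of rank $2d'$, whose geometric Frobenius at $\Xi$ has reversed characteristic polynomial equal to the normalised $L$-function $\prod_{j=1}^{d'}\bigl(1-e(\theta_{\Xi,j})T\bigr)\bigl(1-e(-\theta_{\Xi,j})T\bigr)$. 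Characters of conductor $\le 2$ carry no angles, and $\sum_{3\le d\le 2\kappa-1}|\mathcal U_d(\F_q)|$ accounts for the remaining ones, the stratum $d=2\kappa-1$ being dominant. Here I would invoke Katz's computation of the monodromy: the geometric monodromy group of $\mathcal F_d$ is, as an algebraic group, the full symplectic group $\operatorname{Sp}_{2d'}$ in its standard representation, consistently with the $\pm$-pairing of the angles and with $\dim\operatorname{Sp}_{2(\kappa-1)}=2\kappa^2-3\kappa+1$.

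The next step is the integral refinement needed to run the large sieve: for all primes $\ell$ outside a finite set $\Lambda_{k,p}$ depending only on $k$ and $p$, the mod-$\ell$ geometric monodromy of $\mathcal F_d$ should be all of $\operatorname{Sp}_{2d'}(\F_\ell)$. I would extract this either from the integral information already implicit in Katz's argument, or else from the pseudoreflection local monodromies of $\mathcal F_d$ together with the classification of transvection-generated subgroups of $\operatorname{Sp}_{2d'}(\F_\ell)$. Then, for a $t$-tuple $\bs\Xi=(\Xi_1,\dots,\Xi_t)$ of distinct characters of conductors $d_1,\dots,d_t$, I would form the external direct sum $p_1^*\mathcal F_{d_1}\oplus\cdots\oplus p_t^*\mathcal F_{d_t}$ over the locus of pairwise distinct characters inside $\mathcal U_{d_1}\times\cdots\times\mathcal U_{d_t}$, and check by a Goursat--Kolchin--Ribet argument that its geometric monodromy is $\prod_i\operatorname{Sp}_{2d_i'}$ and its mod-$\ell$ monodromy $\prod_i\operatorname{Sp}_{2d_i'}(\F_\ell)$ for $\ell\notin\Lambda_{k,p}$, using that distinct primitive characters yield geometrically non-isomorphic, mutually non-dual sheaves and that $\operatorname{PSp}_{2m}(\F_\ell)$ is simple for large $\ell$.

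With this in hand, I would apply Girstmair's method exactly as for Theorem~\ref{thm:linIndepExp}. A nontrivial relation $n_0+\sum_{i,j}n_{i,j}\theta_{\Xi_i,j}=0$, with $n_0\in\Z$ and $(n_{i,j})\in\Z^{\sum_i d_i'}\setminus\{0\}$, exponentiates to $\prod_{i,j}e(\theta_{\Xi_i,j})^{n_{i,j}}=1$, i.e.\ the Frobenius conjugacy class at $\bs\Xi$ lies in $Y_{\bs n}:=\{(g_i)_i\in\prod_i\operatorname{Sp}_{2d_i'}:\prod_{i,j}\alpha_j(g_i)^{n_{i,j}}=1\}$, where $\alpha_1,\dots,\alpha_{d_i'}$ are the ``positive'' weights of the standard representation of $\operatorname{Sp}_{2d_i'}$. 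Since each factor is connected with Weyl group the full signed symmetric group acting on the $\alpha_j^{\pm1}$, $Y_{\bs n}$ is a proper closed subvariety whenever $\bs n\ne 0$, so $|Y_{\bs n}(\F_\ell)|\ll_{k,t}\ell^{-1}\prod_i|\operatorname{Sp}_{2d_i'}(\F_\ell)|$ for $\ell\notin\Lambda_{k,p}$ by Lang--Weil. As the $e(\theta_{\Xi_i,j})$ are weight-$0$ Weil numbers generating a number field of degree $O_k(t\kappa)$, the height lemma used for Theorem~\ref{thm:linIndepExp} reduces matters to avoiding $\Omega_\ell:=\bigcup_{0\ne\|\bs n\|_\infty\le H}Y_{\bs n}(\F_\ell)$ for a single $H=H(k,t)$, of density $\ll_{k,t}1/\ell$. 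Feeding this into the large sieve for Frobenius for (possibly wildly ramified) sheaves on higher-dimensional varieties developed earlier, with sieve support $\{\ell\le L:\ell\notin\Lambda_{k,p}\}$, bounding the conductor of the external sum in terms of $k$ and $t$, optimising $L$ as a small power of $q$, and finally summing over conductor-tuples and passing from ordered tuples of distinct characters to $t$-element subsets, should yield the claimed count; the exponent $\tfrac1{2t(2\kappa^2-3\kappa+1)}$ reflects that in the dominant stratum $d_1=\cdots=d_t=2\kappa-1$ the monodromy group $\prod_{i=1}^t\operatorname{Sp}_{2\kappa-2}$ has dimension $t(2\kappa^2-3\kappa+1)$, while the factor $C_{k,p}^t$ absorbs the combinatorics.

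The hard part will be the integral (mod-$\ell$) monodromy statement for the sheaves $\mathcal F_d$, and for their $t$-fold external sums, made uniform in the conductor $d\le 2\kappa-1$ and with an explicit finite exceptional set of primes depending only on $k$ and $p$: Katz's work gives the monodromy as an algebraic group, and upgrading it to a form usable in the large sieve is where the real care is needed. Everything afterwards --- the conductor estimates for the external sums, the Goursat step, the optimisation of $L$, and the uniform control of the error terms as $t=o(\log|\Sb_{k,q}^1|)$ --- is by now routine, if not entirely painless.
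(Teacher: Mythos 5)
Your overall skeleton---cohomological interpretation of the angles via Katz's sheaves, integral monodromy, large sieve for Frobenius, then a multiplicative-relation/Girstmair argument---matches the paper, but several of your implementation choices diverge from it and at least two have real gaps.

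\textbf{Integral monodromy.} You propose to get the mod-$\ell$ monodromy of the super-even sheaves from ``pseudoreflection local monodromies together with the classification of transvection-generated subgroups'', with a \emph{finite} exceptional set of primes. This is unlikely to work here: the paper explicitly remarks (in the discussion following Theorem~\ref{thm:KatzLP}) that strong approximation, which would yield a cofinite exceptional set, is not available for the sheaves of Theorem~\ref{thm:superEvenConstr} because they are not constructed from an analytification, and the local monodromies of $\Gc_{k,\lambda}$ are not described by transvections. What the paper does instead is apply Katz's Larsen--Pink argument (Theorem~\ref{thm:KatzLP}), which produces only a \emph{density-one} set $\Lambda_{k,p}\subset\Spec_{1,p}(\Oc)$ with $\Sp_{2\kappa-2}(\F_\lambda)$ monodromy (Theorem~\ref{thm:monSuperEven}); density one suffices for the sieve, and this is also why the constants $C_{k,p}$ end up depending on $p$.

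\textbf{Wild ramification in the large sieve.} This is where the paper's main technical novelty lives, and your proposal passes over it in a single clause. The variety $\Prim_{k\codd}$ has dimension $\kappa>1$, and $\Gc_{k,\lambda}$ is wildly ramified, so the existing versions of Kowalski's sieve (curves, tame ramification, or monodromy of order prime to $p$) do not apply---the last one fails because $E=\Q(\zeta_{4p^r})$ forces $p\mid|\Sp_{2\kappa-2}(\F_\lambda)|$. The paper's Theorem~\ref{thm:largeSieve}\ref{item:compSystem} handles this by exploiting that $(\Gc_{k,\lambda})_\lambda$ is a compatible system: pulling back along the covering defined by one member trivialises the wild part, and Deligne's tame Euler characteristic formula together with the $\sigma$-$\chi$ inequality bounds the relevant Betti numbers. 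You would need something of this sort---without it the sieve constant $C(L,(\Fc_\lambda)_\lambda)$ is uncontrolled.

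\textbf{Detecting relations.} Your final step directly sieves the relation loci $Y_{\bs n}$ by Lang--Weil after invoking a ``height lemma'' to reduce to $\|\bs n\|_\infty\le H(k,t)$. There is no such lemma in the paper's proof of Theorem~\ref{thm:linIndepExp}, and the reduction is not automatic: bounding the height of a generator of the relation lattice among the $e(\theta_{\Xi,j})$ is essentially equivalent to what one is trying to prove, and the conjugacy-invariance of $Y_{\bs n}$ requires a Weyl-group symmetrisation you don't mention. The paper avoids all this by detecting non-maximal Galois groups of the characteristic polynomials via the factorisation criteria of Proposition~\ref{prop:detectNonmax} (giving conjugacy-invariant sieve sets of density bounded away from $1$ uniformly in $\ell$), and then invoking Girstmair (Proposition~\ref{prop:Girstmair}) to conclude that maximal Galois group $W_{2\kappa-2}^t$ forces the relation module to be trivial.

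Finally, two smaller points: (i) your stratification by conductor $3\le d\le 2\kappa-1$ is unnecessary---the paper works with the single sheaf $\Gc_{k,\lambda}$ on $\Prim_{k\codd}$ (conductor exactly $2\kappa-1$), which covers a proportion $1+O(1/q)$ of characters, and the remainder is absorbed into the error term; (ii) the Goursat step you propose for the external sum is also unnecessary in the paper's framework, since for a single sheaf evaluated at $t$ distinct points the surjectivity of $\pi_1(X^t)\twoheadrightarrow\pi_1(X)^t$ (\cite[Lemma~5.1]{Kow08}) already gives monodromy $\Sp_{2\kappa-2}(\F_\lambda)^t$.
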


  \begin{remark}
    Hypothesis \ref{hyp:LI} would be Theorem \ref{thm:genericLIGP} with $S=\Sb_{k,q}^1$. This is a very strong statement, whose validity may be delicate depending on the relative size of the parameters. Indeed, unlike in the number field situation, there are examples of families of $L$-functions over function fields where linear independence is not satisfied (although with $q$ fixed, and eventually growing genus): see e.g. \cite[Section 6]{Kow08}, \cite[Section 5]{Cha08} and \cite{Li18}.
  \end{remark}
  \begin{remark}\label{rem:weakExplicit1}
  One can get the explicit dependency of the base $C_{k,p}$ with respect to $k,p$ in Theorem \ref{thm:genericLIGP}, at the cost of a weaker error, replacing the latter by $O _{k,p}\left(\frac{(C(k+1)^{k+1})^t\log\log{q}}{\log{q}}\right)$ with $C$ absolute. Under a group theoretic conjecture, one could do so while keeping the strength of Theorem \ref{thm:genericLIGP}: see Remark \ref{rem:BurnsideImprove}.
\end{remark}

  Let us now explain how this relates to biases and the random vectors $X_{k,N}(\bs u)$ defined above. We adapt classical arguments \cite{RubSar94,MarNg17,Dev18} to the function field setting, as in \cite{Cha08,DevMeng18}, to show:
  
\begin{theorem}[Limiting distribution, expected value]\label{thm:limitDistrExp}
  The random vector $X_{k,N}(\bs u)$ admits a compactly supported limiting distribution as $N\to\infty$ with $\kappa <N/2$ fixed. Namely, it converges in law to a $\R^R$-valued random variable $X_{k}(\bs u)$. Moreover, the expected value of the latter is
  \[\E \left(X_{k}(\bs u)\right)=\Big(-|\{b\in\Sb_{k,q}^1 : b^2=u_r\}|/2\Big)_{1\le r\le R}\subset \{-1/2,0\}^R,\]
  which means that there should be a bias towards sectors parametrized by non-squares.
\end{theorem}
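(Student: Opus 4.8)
The plan is to follow the Rubinstein--Sarnak / Cha / Devin framework adapted to this function-field setting. First I would obtain an explicit spectral expansion for $N_{k,n}(u_r)$ in terms of the $L$-functions of characters $\Xi$ of $\Sb_{k,q}^1$. Using the orthogonality of characters on the finite group $\Sb_{k,q}^1$ together with the standard expression of $\pi(n; \Xi)$ (the sum of $\Xi(U(\pf))$ over primes $\pf$ of degree $n$) via the logarithmic derivative of $L(\Xi)$, one writes
\[
  \frac{q^{\kappa} n}{q^{n/2}}\Big(N_{k,n}(u_r) - \tfrac{q^n/n}{q^\kappa}\Big)
   = -\sum_{\Xi \ne \Xi_0} \overline{\Xi(u_r)} \sum_{j=1}^{d'(\Xi)}\big(e(n\theta_{\Xi,j}) + e(-n\theta_{\Xi,j})\big) + (\text{lower-order}),
\]
where the lower-order terms come from the contributions of prime powers and from the trivial character, which I would show are $O(q^{-n/2}\cdot n\cdot \text{something})$ and hence vanish in the limit; one must be slightly careful here because the trivial character contributes a genuine constant, which is exactly what produces the nonzero expected value. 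The key structural point is that the inverse roots of $L(\Xi)$ all have absolute value $q^{1/2}$ (purity, from Katz / Deligne, as recalled earlier), which is what makes the normalization give a bounded quantity; the number of roots is $d'(\Xi)=(d(\Xi)-1)/2$ per $\Xi$, bounded in terms of $\kappa$, so the sum over $\Xi$ is finite (there are $q^\kappa$ characters) and the whole expression is a bounded trigonometric polynomial in $n$.

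Next I would invoke the Kronecker--Weyl equidistribution theorem: writing the vector $X_{k,N}(\bs u)$ as $F(n\bs\theta)$ for a fixed continuous (indeed trigonometric-polynomial) function $F$ on a torus $\mathbb{T}^m$, where $\bs\theta$ collects all the angles $\theta_{\Xi,j}$, the sequence $(n\bs\theta \bmod 1)_{n\ge 1}$ equidistributes on the closed subtorus $\mathcal{T}$ which is the closure of $\{n\bs\theta\}$. Hence $X_{k,N}(\bs u)$ converges in law to $X_k(\bs u) := F(Y)$ with $Y$ Haar-distributed on $\mathcal{T}$; since $F$ is continuous and $\mathcal{T}$ compact, the limiting law is compactly supported. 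This gives the first assertion with no linear-independence input needed — only that the angles are real and the roots pure. (If one does assume Hypothesis~\ref{hyp:LI} or Theorem~\ref{thm:genericLIGP}, then $\mathcal{T}$ is the full torus, but that is not required for the mere existence and compact support of the limit.)

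For the expected value, integrate $F$ over $\mathcal{T}$. The terms $e(\pm n\theta_{\Xi,j})$ are nonconstant characters of $\mathbb{T}^m$ whose restriction to $\mathcal{T}$ still has zero mean, provided no nontrivial $\mathbb{Z}$-relation forces $e(n\theta_{\Xi,j})$ to be constant — and $\theta_{\Xi,j}\notin\{0,1\}$ by purity (the roots are not $\pm q^{1/2}$; more precisely one uses that $L(\Xi)$ for $\Xi$ of conductor $\ge 3$ has no root at $\pm 1$ after normalization, which follows from the functional equation / monodromy statement of Katz that the geometric monodromy is the full symplectic or special linear group, so there is no "extra" root). Therefore all oscillatory terms average to zero, and $\E(X_k(u_r))$ equals the constant term, which comes precisely from the square characters: $\Xi = \psi^2$ contributes when pulled back, and summing $\overline{\Xi(u_r)}$ over such $\Xi$ via orthogonality counts $|\{b \in \Sb_{k,q}^1 : b^2 = u_r\}|$, with the factor $-1/2$ emerging from the pair $\{\theta, -\theta\}$ and the sign in the explicit-formula expansion. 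This reproduces
\[
  \E(X_k(\bs u)) = \Big(-\tfrac{1}{2}\,|\{b \in \Sb_{k,q}^1 : b^2 = u_r\}|\Big)_{1\le r\le R} \subset \{-1/2, 0\}^R,
\]
and since $\Sb_{k,q}^1$ has odd order is not automatic — actually $|\Sb_{k,q}^1| = q^\kappa$ with $q$ odd, so squaring is a $2$-to-$1$ or $1$-to-$1$ map depending on the $2$-part; the upshot is that $u_r$ a square gives $|\{b: b^2=u_r\}|$ equal to the number of square roots, which after accounting gives the value $-1/2$ times a nonnegative integer that in practice is $1$ or $0$, hence the bias towards non-squares.

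The main obstacle I anticipate is the careful bookkeeping of the "secondary" terms in the explicit formula — the prime-power contributions and the precise matching of the trivial- and square-character pieces — to show they converge to exactly the stated constant and not merely to $O(1)$; in particular one must verify that the pole/trivial-character term is fully absorbed into the counting of squares with the right sign and does not leave a residual contribution. The equidistribution step is routine once purity and the non-vanishing of $L(\Xi)$ at $\pm 1$ are in hand, and those follow from Katz's monodromy computation already cited.
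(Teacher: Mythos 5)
Your overall architecture --- an explicit formula via orthogonality and the logarithmic derivative of $L(\Xi)$, Kronecker--Weyl for convergence in law, and integration over the limiting subtorus for the mean --- matches the paper's (Proposition~\ref{prop:XkNun} together with the argument in Section~\ref{subsec:limitingDiscExp}), and you correctly observe that neither the existence of the limit nor its compact support requires any linear-independence input. The gap is in identifying where the constant $-\tfrac12|\{b\in\Sb_{k,q}^1 : b^2=u_r\}|$ actually comes from, and you give two incompatible and both incorrect accounts: you first lump the prime-power terms with the trivial character together as ``lower-order'' and then say the trivial character is the source of ``a genuine constant''; later you attribute the constant to ``square characters $\Xi=\psi^2$.'' In fact the trivial character contributes $-\delta_{u=1}q^\kappa/q^{n/2}$, which decays. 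The constant comes from prime \emph{squares}: for $n$ even, primes $P$ of degree $n/2$ with $U(P)^2\in\Sec(u_r,k)$ are picked up by the $\Lambda$-weighted quantity in the explicit formula but are not primes of degree $n$ in the sector, and after normalization and Dirichlet's theorem for function fields this produces the term $-\delta_{n\ \mathrm{even}}|\{b\in\Sb_{k,q}^1:b^2=u_r\}|$ of Proposition~\ref{prop:XkNun}. Consequently the factor $1/2$ is the density of even $n$ in $\{1,\dots,N\}$; it has nothing to do with the pairing $\{\theta,-\theta\}$, which is already absorbed when writing the oscillatory piece with cosines.

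Two smaller points. Your parenthetical on the $2$-part of $|\Sb_{k,q}^1|$ is garbled: $|\Sb_{k,q}^1|=q^\kappa$ is always odd here (as $q$ is a power of $p>5$), so squaring is a bijection and there is no ``$2$-to-$1$ or $1$-to-$1$ depending on the $2$-part'' alternative; $|\{b:b^2=u\}|$ is in fact identically $1$. And to kill the oscillatory piece in the mean you do need $\theta_{\Xi,j}\neq 0$ so that $\tfrac1N\sum_{n\le N}\cos(2\pi n\theta_{\Xi,j})\to 0$; that is the right condition to isolate, but it does not follow from ``purity'' alone, since $e(0)=1$ also has absolute value $1$ (the paper's averaging step implicitly relies on the same fact).
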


\begin{theorem}[Continuity, symmetry, bias]\label{thm:distrPropGPLI}
  If Hypothesis \ref{hyp:LI} holds and $R<\frac{\kappa-1}{2}$ is an integer, the distribution of $X_{k}(\bs u)$ is:
  \begin{enumerate}
  \item\label{item:thm:distrPropGPLIABsCont} absolutely continuous: there exists a Lebesgue integrable function $f$ on $\R^R$ such that $\P(X_{k}(\bs u)\in A)=\int_A fd\bs x$ for all Borel subsets $A\subset\R^R$.
  \item\label{item:thm:distrPropGPLISymmetry} symmetric with exchangeable components around its mean:\\
    for $X_k^0(\bs u):=X_k(\bs u)-\E(X_k(\bs u))$, we have
    \[X_k^0(\bs u)\sim -X_k^0(\bs u), \ \sigma(X_k^0(\bs u))\]
    for any permutation $\sigma\in \Sf_R$ of the coordinates.
  \end{enumerate}
  Hence,
  \[\lim_{N\to\infty}\P\Big(X_{k,N}(u_1)<\dots<X_{k,N}(u_R)\Big)=\P \Big(X_{k}(\bs u)_1<\dots<X_{k}(\bs u)_R\Big),\]
  which is $1/R!$ if the $u_i$ are all squares or all non-squares. If $u_2$ is a square while $u_1$ is not, and $\kappa>5$, then $\lim_{N\to\infty} \P\big(X_{k,N}(u_1)<X_{k,N}(u_2)\big)<1/2$.
\end{theorem}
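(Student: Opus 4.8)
The plan is to transfer the classical Rubinstein--Sarnak machinery \cite{RubSar94}, in the function field form used by Cha \cite{Cha08} and Devin--Meng \cite{DevMeng18}, to the random vector $X_{k,N}(\bs u)$, exploiting Hypothesis~\ref{hyp:LI} as the substitute for GSH. First I would recall, from the proof of Theorem~\ref{thm:limitDistrExp}, the explicit expansion of each component $X_{k,N}(u_r)$ as a sum over characters $\Xi\in\widehat\Sb_{k,q}^1$ and over the inverse roots $e(\pm\theta_{\Xi,j})$ of the associated $L$-functions: schematically $X_{k,N}(u_r)=c_r+\sum_{\Xi}\overline{\Xi(u_r)}\sum_{j}\big(\text{oscillating term in }e(n\theta_{\Xi,j})\big)+o(1)$, where $c_r=-|\{b:b^2=u_r\}|/2$ is the mean computed there. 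Passing to the limit $N\to\infty$, the limiting random variable $X_k(\bs u)$ is a (finite, since $d(\Xi)\le 2\kappa-1$) linear combination of the random phases $e(2\pi\theta_{\Xi,j})$, which, by Hypothesis~\ref{hyp:LI} and the Kronecker--Weyl equidistribution theorem, become independent uniform random variables on the circle: the time-parameter vector $(n\theta_{\Xi,j})_{\Xi,j}$ equidistributes in the full torus $(\R/\Z)^{\sum d'(\Xi)}$.

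For part \ref{item:thm:distrPropGPLIABsCont}, absolute continuity, the standard route is via the characteristic function (Fourier transform) of $X_k(\bs u)$: write it as an (infinite, over the now-independent uniform phases) product of Bessel-type factors $\prod J_0(\,\cdot\,)$ coming from each $e(2\pi\theta_{\Xi,j})$, and use the decay $J_0(x)\ll |x|^{-1/2}$ together with the lower bound on the number of characters of each conductor (this is where $R<\tfrac{\kappa-1}{2}$ enters: one needs enough independent phases feeding into \emph{each} coordinate and into generic linear combinations of the coordinates, so that the characteristic function is integrable on $\R^R$). Then $X_k(\bs u)$ has a density by Fourier inversion; compact support is already given by Theorem~\ref{thm:limitDistrExp}. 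For part \ref{item:thm:distrPropGPLISymmetry}, the symmetry $X_k^0(\bs u)\sim -X_k^0(\bs u)$ follows because replacing each independent uniform phase $\theta\mapsto\theta+1/2$ (or using that $\{\theta_{\Xi,j}\}$ and $\{-\theta_{\Xi,j}\}$ have the same law, i.e. the functional equation pairs roots) negates every oscillating term while fixing the means; exchangeability of the components will follow from the fact that, modulo the mean, each coordinate is the \emph{same} function of the phases twisted only by the unitary characters $\overline{\Xi(u_r)}$, and a Hypothesis~\ref{hyp:LI}-driven argument (à la \cite{RubSar94,MarNg17,Dev18}) shows these twists do not affect the joint law after symmetrization — here one uses that the $u_r$ are distinct and that distinct characters contribute independent phases.

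The ordering statement is then a formal consequence: for a symmetric exchangeable vector, $\P(Y_1<\dots<Y_R)=1/R!$, applied to $Y=X_k^0(\bs u)$ when all $c_r$ are equal (all $u_i$ squares or all non-squares, so all $c_r$ equal $-1/2$ or all $0$); and when $c_1=0<{-}c_2=1/2$ wait — rather $c_1=0$, $c_2=-1/2$, so $\E X_k(u_1)=0>-1/2=\E X_k(u_2)$, the bias $\lim\P(X_{k,N}(u_1)<X_{k,N}(u_2))=\P(X_k^0(u_1)-X_k^0(u_2)<-1/2)<1/2$ because $X_k^0(u_1)-X_k^0(u_2)$ is symmetric about $0$ and (by absolute continuity, using $\kappa>5$ to guarantee a genuine density for this one-dimensional marginal) assigns positive mass to $[-1/2,0)$. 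I expect the main obstacle to be part \ref{item:thm:distrPropGPLISymmetry}, specifically proving \emph{exchangeability} rather than mere symmetry: one must show that permuting the $u_r$ yields the same joint distribution, which requires carefully tracking how the unitary twists $\Xi(u_r)$ interact with the independence supplied by Hypothesis~\ref{hyp:LI}, and checking that no ``self-dual'' or small-conductor characters create exceptional dependencies among the coordinates — this is exactly the point where the hypotheses $3\le d(\Xi)$, $p>k$ and $R<\tfrac{\kappa-1}{2}$ must be used with care, paralleling the delicate parity/self-duality discussion in \cite{RubSar94,Cha08}.
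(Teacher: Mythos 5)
Your high-level plan coincides with the paper's: Fourier transform via Lemma~\ref{lemma:FTGP}, factorization into Bessel factors under Hypothesis~\ref{hyp:LI}, $J_0$-decay to obtain $L^1$-integrability of $\hat\mu_{k,\bs u}$, symmetry and exchangeability read off the product formula, and the ordering probability from absolute continuity. However, there is a genuine gap in your argument for part~\ref{item:thm:distrPropGPLIABsCont}. Invoking ``the lower bound on the number of characters of each conductor'' is not the point that makes the integral converge: having many factors $J_0(2|\bs t\cdot\Xi(\bs u)|)$ does nothing if, for certain directions $\bs t$, the arguments $\bs t\cdot\Xi(\bs u)$ remain uniformly small (each factor is then close to $1$). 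The decisive step, which you do not identify, is a Parseval/orthogonality computation over the set $S$ of primitive characters of top conductor: expanding
\[
\frac{1}{|S|}\sum_{\Xi\in S}|\bs t\cdot\Xi(\bs u)|^2=\sum_{r,r'}t_r\overline{t_{r'}}\cdot\frac{1}{|S|}\sum_{\Xi\in S}\Xi(u_r)\overline{\Xi}(u_{r'})=\frac{\|\bs t\|^2}{1-1/q},
\]
using distinctness of the $u_r$, gives the needed lower bound, and then the AM--GM inequality combined with the exponent $\kappa-1$ produces $|\hat\mu_{k,\bs u}(\bs t)|\le\|\bs t\|^{-(\kappa-1)/2}$ outside a bounded set, which is integrable on $\R^R$ precisely when $\kappa-1>2R$. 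Without this step, the role of $R<\tfrac{\kappa-1}{2}$ remains unjustified and your sketch could not be turned into a proof.

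On exchangeability, your instinct is correct: this is not a triviality, and your closing remarks identify the right worry. The paper dispatches it in one sentence (``follows from the expression \eqref{eq:muhatGPLI}''), but invariance of $\prod_\Xi J_0(2|\bs t\cdot\Xi(\bs u)|)$ under permuting the $u_r$ amounts to invariance of the multiset $\{|\bs t\cdot\Xi(\bs u)|\,:\,\Xi\}$, and factoring out a unit $\overline{\Xi}(u_{r_0})$ only realizes a translation of the tuple $\bs u$, not an arbitrary permutation. Your appeal to ``the twists do not affect the joint law after symmetrization'' is not an argument, and neither your sketch nor the paper supplies the combinatorial step needed to close this; you are right that this is where the self-duality/parity considerations from \cite{RubSar94,Cha08} (all $u_r$ squares, or all non-squares) have to enter, and any corrected write-up must address it explicitly rather than rely on the formula alone.
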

\begin{remark}
  The restriction $R<\frac{\kappa-1}{2}$, rather strong with respect to the maximum $R=q^\kappa$, comes from the fact that the $L$-functions have finitely many zeros, in contrast with the number field case.
\end{remark}

Hence, our generic linear independence statement, Theorem \ref{thm:genericLIGP}, implies the following towards an unconditional Theorem \ref{thm:distrPropGPLI}:
\begin{corollary}[of Theorem \ref{thm:genericLIGP}]\label{cor:distrPropGPweak}
  Assuming that $p>k$, the limiting distribution $X_k(\bs u)$ of Theorem \ref{thm:limitDistrExp} is:
  \begin{enumerate}
  \item continuous: $\P(X_k(\bs u)=\bs a)=0$ for any $\bs a\in\R^R$. In particular, for $u\in\Sb_{k,q}^1$, $\lim_{N\to\infty} \P(X_{k,N}(u)>0)=\P(X_{k}(u)>0).$
  \item a pushforward of the Lebesgue measure on a torus of dimension
      \[\gg_{\varepsilon,k}(\log|\Sb_{k,q}^1|)^{1-\varepsilon},\quad\text{ for any }\varepsilon>0.\]
  \end{enumerate}
\end{corollary}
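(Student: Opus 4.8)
The plan is to deduce Corollary \ref{cor:distrPropGPweak} from Theorem \ref{thm:genericLIGP} by choosing $t$ large enough so that \emph{some} subset $S$ of size $t$ of $\widehat\Sb_{k,q}^1$ has all its angles $\Q$-linearly independent with $1$, and then exploit the structure of the limiting distribution established in Theorem \ref{thm:limitDistrExp}. Recall from the construction of $X_{k}(\bs u)$ that it is a trigonometric series in the angles $\theta_{\Xi,j}$ running over characters $\Xi$ of bounded conductor; if a suitable sub-collection of those angles is $\Q$-linearly independent together with $1$, then by Weyl equidistribution the corresponding sub-sum of the trigonometric series has a limiting distribution that is a genuine pushforward of Lebesgue measure on a torus of the relevant dimension, and in particular is continuous (atomless). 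The remaining angles contribute an independent bounded perturbation, and the convolution of a continuous distribution with anything is still continuous; this yields part (1). Combined with the weak-convergence statement of Theorem \ref{thm:limitDistrExp}, continuity at $\bs a$ gives $\lim_{N\to\infty}\P(X_{k,N}(u)>0)=\P(X_k(u)>0)$ by the portmanteau theorem, since the boundary $\{x=0\}$ has measure zero.

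For part (2), I would take $t$ as large as Theorem \ref{thm:genericLIGP} permits while still guaranteeing a nonempty ``good'' set of subsets, i.e. a $t$ for which the error term $O_{k,p}\big(C_{k,p}^t(\log q)\,q^{-1/(2t(2\kappa^2-3\kappa+1))}\big)$ in Theorem \ref{thm:genericLIGP} is $o(1)$, equivalently such that $t\log C_{k,p}+\log\log q=o\big(\frac{\log q}{t\kappa^2}\big)$. Solving this asymptotically, one may take $t$ of size roughly $(\log q)^{1/2-\varepsilon}/\kappa$ (or, using the weaker but explicit variant in Remark \ref{rem:weakExplicit1}, a slightly different but still growing power of $\log q$), and since $\log|\Sb_{k,q}^1|=\kappa\log q$, this gives $t\asymp_{k,\varepsilon}(\log|\Sb_{k,q}^1|)^{1/2-\varepsilon}$. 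Each such subset $S$ provides $\sum_{\Xi\in S}d'(\Xi)$ linearly independent angles; since conductors lie in $[3,2\kappa-1]$ we have $d'(\Xi)\ge 1$, so the torus has dimension $\ge t$, and after adjusting $\varepsilon$ this is $\gg_{\varepsilon,k}(\log|\Sb_{k,q}^1|)^{1-\varepsilon}$. (If one only needs the stated exponent $1-\varepsilon$ rather than $1/2-\varepsilon$, note that most $d'(\Xi)$ are of order $\kappa$ for generic $\Xi$, which gains the extra factor.)

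The main obstacle I anticipate is purely bookkeeping: one must verify that the sub-collection of angles selected by Theorem \ref{thm:genericLIGP} genuinely appears as independent coordinates in the explicit Fourier expansion of $X_k(\bs u)$ used in the proof of Theorem \ref{thm:limitDistrExp}, i.e. that the characters $\Xi$ of conductor in $[3,2\kappa-1]$ (those quantified in Theorem \ref{thm:genericLIGP} and in Hypothesis \ref{hyp:LI}) are exactly the ones contributing the oscillatory terms, and that no relations among the Fourier coefficients can collapse the torus dimension. One also needs $t\le q^\kappa$ and $t=o(\log|\Sb_{k,q}^1|)$ to stay within the hypotheses of Theorem \ref{thm:genericLIGP}; the choice above satisfies both comfortably. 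Everything else — continuity of a Lebesgue pushforward on a positive-dimensional torus, stability of continuity under independent bounded perturbation, and the portmanteau deduction — is standard.
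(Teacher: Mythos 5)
Your proposal for part (2) is essentially the paper's argument (take a $\Q$-linearly independent collection of angles from Theorem~\ref{thm:genericLIGP} and note that the dimension of $\overline\Gamma$ is at least the number of linearly independent elements among the $\theta_{\Xi,j}$), and your extra care about the constraint the error term imposes on $t$ is a reasonable observation. But part (1) has a genuine gap, and the paper's route is quite different.

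You argue: pick $S$ with the angles $\{\theta_{\Xi,j}:\Xi\in S\}$ linearly independent over $\Q$ together with $1$; then the corresponding sub-sum of the trigonometric series is atomless, ``the remaining angles contribute an independent bounded perturbation,'' and convolution with anything preserves continuity. The problem is the word \emph{independent}. Both the $S$-part and the remainder are functions of the \emph{same} uniform point on the closure $\overline\Gamma\subset(\R/\Z)^V$ from~\eqref{eq:Gamma}; they are independent only if $\overline\Gamma$ factors as a product of its projections to the $S$-coordinates and the non-$S$ coordinates. Knowing that the $S$-angles are jointly independent of each other and of $1$ gives no control over relations \emph{linking} $S$-angles to non-$S$ angles, and such relations destroy the product structure. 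Concretely: if $\theta_{\Xi_0,j_0}$ is irrational, $\Xi_0\in S$, but there is a $\Xi_1\notin S$ with $\theta_{\Xi_1,j_1}\equiv-\theta_{\Xi_0,j_0}\ (\mathrm{mod}\ \Q)$ and the roots of unity $\Xi_0(u)$, $\Xi_1(u)$ conspire so that $\Xi_0(u)+\overline{\Xi_1(u)}=0$, then the contributions of $(\Xi_0,j_0)$ and $(\Xi_1,j_1)$ to $g_{k,u}|_{\overline\Gamma}$ cancel identically. Your sub-sum is continuous, the remainder is bounded, and yet the total can still degenerate. So ``continuous $\ast$ anything'' is the wrong tool; without an independence input it proves nothing.

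The paper avoids this entirely. Following Devin \cite{Dev18} (cf.~\cite{DevMeng18}), one reduces to $R=1$ and applies Wiener's lemma: it suffices to show $\frac{1}{2S}\int_{-S}^S|\hat\mu_{k,u}(t)|^2\,dt\to 0$. From Lemma~\ref{lemma:FTGP}, $|\hat\mu_{k,u}(t)|\le\big|\int_{\overline\Gamma}\exp(it\phi(\bs x))\,d\bs x\big|$ for a real-analytic $\phi$. The only input from Theorem~\ref{thm:genericLIGP} is that \emph{at least one} angle $\theta_{\Xi,j}$ is irrational, forcing $\overline\Gamma$ to have positive dimension and $\phi|_{\overline\Gamma}$ to be non-constant; then the van der Corput/scaling principle \cite[VIII.2, Prop.\ 5]{Ste93} gives $|\hat\mu_{k,u}(t)|\ll|t|^{-\alpha}$, which is more than enough for Wiener. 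This is not just a bookkeeping difference: the Wiener--scaling route needs only one irrational angle, is insensitive to cross-relations between $S$ and its complement, and does not rest on any unjustified independence claim. Your proposal instead pushes the weight of the proof onto precisely the structure (independence across the whole torus) that Theorem~\ref{thm:genericLIGP} does not give you. I'd rewrite part (1) along the Wiener/Devin lines.
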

\begin{remark}
  Concerning the stronger properties of Theorem \ref{thm:distrPropGPLI} (absolute continuity, symmetry), Devin \cite{Dev18} and Martin--Ng \cite{MarNg17} have shown that they hold under weaker conditions than full linear independence. However, we cannot exploit these here since their statements always involve all the roots/eigenvalues, while results obtained from the large sieve will be limited to a small subset.
\end{remark}

\subsection{Prime polynomials in short intervals}\label{subsec:PPSI}

Some of the techniques in \cite{RudWax17} actually originate from Keating and Rudnick \cite{KeatRud14}, who showed function field analogues of a conditional result of Goldston--Montgomery on primes in short intervals and of a conjecture of Hooley on the variance of primes in arithmetic progressions with fixed modulus.

For $A\in\F_q[T]$ of degree $n\ge 1$ and $1\le h\le n$,
\[\nu_h(A):=\sum_{\substack{f\in\F_q[T] \\ \deg(f-A)\le h}} \Lambda(f)\]
counts prime polynomials in a ``short interval'' around $A$, weighted by the function field von Mangoldt function $\Lambda$ (defined by $\Lambda(f)=\deg(P)$ if $f=P^k$, $P\in\F_q[T]$ prime, $\Lambda(f)=0$ otherwise). The mean value over the centers $A$ having degree $n$ is
\begin{equation}
  \label{eq:varnu}
  \E_{q,n} \left(\nu_h\right):=\frac{1}{q^n}\sum_{\substack{A\in\F_q[T]\text{ monic}\\ \deg(A)=n}} \nu_h(A)=q^{h+1}\left(1-\frac{1}{q^n}\right)
\end{equation}
(see \cite[(2.7), Lemma 4.3]{KeatRud14}). Keating and Rudnick, \cite[Theorem 2.1]{KeatRud14}, using another equidistribution result of Katz \cite{Kat13} when $h<n-3$, compute the corresponding variance explicitly, obtaining an unconditional analogue of the Goldston--Montgomery result mentioned above.


Any monic $A\in\F_q[T]$ of degree $n$ can be written uniquely as
\[A=T^{h+1}B+C\quad\text{with}\quad
  \begin{cases}
      B\text{ monic, }\deg(B)=n-h-1\\
      \deg(C)\le h,
  \end{cases}\]
and $\nu_h(A)=\nu_h(T^{h+1}B)$ only depends on $B$. This observation allows us to fix $n-h=:m$ and take $n\to\infty$. For $B_1,\dots,B_R\in\F_q[T]$ distinct and monic of degree $m-1$, we can study the $\R^R$-valued random vector of biases
\begin{eqnarray*}
  X_{m,N}(\bs B)&:=&\big(X_{m,N}(B_1),\dots,X_{m,N}(B_R)\big),\text{ where}\\
  X_{m,N}(B_r)&:=&\left(\frac{q^{m}}{q^{n/2+1}}\Big(\nu_{n-m}(T^{n-m+1}B_r)-\E_{q,n}(\nu_{n-m})\Big)\right)_{1\le n\le N}
\end{eqnarray*}
(with the uniform measure on $[1,N]\cap\N$), the expected values being those in \eqref{eq:varnu}. Again, the normalization is chosen so that $X_{m,N}(u_r)$ is bounded as $N\to\infty$ (with $q,m$ fixed), which will be clear later on.\\

 In this setting, we obtain results analogous to those exposed in Section \ref{subsec:anglesGaussian}. Let
\begin{equation}
  \label{eq:eigenvalueschi}
  e \left(\theta_{\chi,j}\right) \quad (1\le j\le d-1), \quad \theta_{\chi,j}\in[0,1],
\end{equation}
be the roots associated to the $L$-function associated to an even Dirichlet character $\chi$ modulo $T^m\in\F_q[T]$ (see Section \ref{sec:anglesGP} for the precise definitions), for $2\le d\le m$.

\begin{hypothesis}\label{hyp:LIchi}
  The angles $\theta_{\chi,j}$, for $\chi\ (\text{mod }T^{m})\text{ even}$, $1\le j\le \cond(\chi)-2$, are $\Q$-linearly independent.
\end{hypothesis}

\begin{theorem}[Generic linear independence]\label{thm:genericLIGPchi}
  Assume that $m$ is odd, $p>m$ and $t=o(\log(q^{m-1}))$ \textup{(}e.g. $t$ fixed\textup{)}. For almost all subsets $S$ of size $t$ of even Dirichlet characters mod $T^{m}$, that is for
  \[\binom{q^{m-1}}{t}\left(1+O_{p,m} \left(\frac{C_{m,p}^t\log{q}}{q^{1/(2t(m-2)^2)}}\right)\right)=\binom{q^{m-1}}{t}(1+o_{p,m}(1))\]
  of them, with $C_{m,p}\ge 1$ depending only on $p,m$, the elements
  \[1, \qquad \theta_{\chi,j} \qquad \left(\chi\in S, \ 1\le j\le \cond(\chi)-2\right)\]
  are $\Q$-linearly independent.
\end{theorem}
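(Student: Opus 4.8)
The plan is to run, for the family of even Dirichlet characters modulo $T^m$, the same machine that yields Theorem~\ref{thm:genericLIGP}: Kowalski's large sieve for Frobenius combined with Girstmair's method, the geometric input being Katz's determination \cite{Kat13} of the monodromy of the associated $L$-functions; I would present the two proofs in parallel, pointing only to what changes.

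\emph{Geometric set-up.} Let $\mathcal{X}/\F_q$ be the parameter space of primitive even Dirichlet characters modulo $T^m$. The non-primitive even characters number $O(q^{m-2})$, so the $t$-subsets meeting the non-primitive locus form a fraction $\ll t/q$ of all $t$-subsets, well inside the error term; it therefore suffices to treat subsets $S$ consisting of primitive characters, all of conductor $T^m$ and hence all yielding an $L$-function of degree $m-2=\cond(\chi)-2$. By Katz's theorem \cite{Kat13} and the cohomological description of these $L$-functions, there is a lisse $\overline{\Q_\ell}$-sheaf $\mathcal{F}$ on $\mathcal{X}$, pure of weight $0$ and of constant rank $m-2$, whose geometric Frobenius at $\chi$ has eigenvalues exactly the $e(\theta_{\chi,j})$, $1\le j\le m-2$. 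The crucial point, where the hypotheses that $m$ is odd and $p>m$ enter, is that the geometric and arithmetic monodromy groups of $\mathcal{F}$ coincide and equal $\mathbb{G}=\mathrm{GL}_{m-2}$ (this is Katz's monodromy computation, and $p>m$ also keeps the wild ramification of $\mathcal{F}$ bounded); consequently $\dim\mathbb{G}=(m-2)^2$, and the Weyl group of $\mathbb{G}$ acts on the standard representation as the symmetric group $\Sf_{m-2}$, permuting the $m-2$ eigenvalues. In particular $\mathcal{F}$ is geometrically irreducible and non-self-dual.

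\emph{Integral and product monodromy.} I would then promote this to mod-$\ell$ information, as in the proof of Theorem~\ref{thm:genericLIGP}: the known $\overline{\Q_\ell}$-monodromy together with a pseudoreflection occurring in a local monodromy of $\mathcal{F}$ forces, for all primes $\ell$ outside a thin set, the image of the geometric fundamental group modulo $\ell$ to contain $\mathrm{SL}_{m-2}(\F_\ell)$; and, by a Goursat--Kolchin--Ribet argument using the geometric irreducibility and non-self-duality of $\mathcal{F}$, on the complement of the diagonals in $\mathcal{X}^t$ the external tensor product $\mathcal{F}^{\boxtimes t}$ has geometric monodromy whose reduction modulo $\ell$ contains $\mathrm{SL}_{m-2}(\F_\ell)^t$ for $\ell$ in a set of density one. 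On the group-theoretic side, Girstmair's criterion reduces the $\Q$-linear independence of $1$ and the $\theta_{\chi,j}$ ($\chi\in S$) to a condition on the tuple $(\mathrm{Frob}_\chi)_{\chi\in S}$: for each $\chi$, the characteristic polynomial of $\mathrm{Frob}_\chi$ should be separable with splitting field of full Galois group $\Sf_{m-2}$, with neither the ratio of two of its roots nor the product of all of them a root of unity, and the splitting fields attached to the various $\chi\in S$ should be linearly disjoint; for $\mathbb{G}$ of $\mathrm{GL}$-type these genericity requirements leave no $\Sf_{m-2}^t$-stable multiplicative relation among the eigenvalues, so the listed angles must be $\Q$-linearly independent. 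Transcribed through the Frobenius dictionary, the tuples in $\mathbb{G}(\F_\ell)^t$ failing this condition form a conjugation-invariant subset $\Omega_\ell$ of relative density $\ll t/\ell$, by the standard count of cycle types in $\Sf_{m-2}$ and of elements with prescribed eigenvalue behaviour.

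\emph{Large sieve and the main obstacle.} The conclusion then follows by applying the large sieve for Frobenius to $\mathcal{X}^t$, sieving by the sets $\Omega_\ell$ over primes $\ell\le L$. This is the step that requires the principal new tool of the paper, and where I expect the genuine difficulty: the sheaf $\mathcal{F}$, hence $\mathcal{F}^{\boxtimes t}$ on $\mathcal{X}^t$, is wildly ramified --- the characters modulo $T^m$ are themselves wildly ramified, and this wild ramification persists along the boundary of any compactification --- so Kowalski's large sieve for Frobenius does not apply verbatim, and one must invoke the extension of it to wildly ramified sheaves on varieties proved earlier in the paper. Its quantitative input is a suitable conductor of $\mathcal{F}^{\boxtimes t}$, built from its rank and from the Swan conductors of its wild ramification, which Katz's estimates bound by $C_{m,p}^t$ for a constant $C_{m,p}$ depending only on $m$ and $p$; this is the sole origin of the dependence of all the constants on $m,p$ alone. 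Granting this, the error term in the sieve inequality is, up to the square of that conductor, of the shape $L^{O(t(m-2)^2)}q^{-1/2}$, so that the choice $L\asymp q^{1/(t(m-2)^2)}$ balances it against the main term and bounds the number of bad $t$-subsets by $\binom{q^{m-1}}{t}\,O_{p,m}\bigl(C_{m,p}^t\log q\cdot q^{-1/(2t(m-2)^2)}\bigr)$, which is the claim. Apart from checking that the wildly ramified $\mathcal{F}^{\boxtimes t}$ satisfies the hypotheses of the extended large sieve, and replacing \cite{Katz16} by \cite{Kat13}, every step coincides with the proof of Theorem~\ref{thm:genericLIGP}.
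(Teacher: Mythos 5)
The core difficulty that your sketch glosses over is exactly the one the paper flags as the reason it needs the ``projective monodromy'' clause of its large sieve, and your proposed fix does not work. You assert that ``the geometric and arithmetic monodromy groups of $\mathcal{F}$ coincide and equal $\mathrm{GL}_{m-2}$.'' That is not what Katz proves in \cite{Kat13}: one only gets $\SL_{m-2}(\C)\le G^{\geom}\le G^{\arith}\le\GL_{m-2}(\C)$, and the two groups do \emph{not} coincide here (the determinant sheaf is a nontrivial geometrically constant character after the Tate twist). This is precisely the case the paper singles out in the remark following Theorem~\ref{thm:KatzLP} as \emph{not} being covered by the ``determinant trivial, $G^{\geom}=G^{\arith}$'' situation. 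Consequently the monodromy of $\mathcal{F}$ is not one of the groups $\SL_r$, $\Sp_r$ that the sieve (Theorem~\ref{thm:largeSieve}, hypothesis \ref{item:notPG}) or the Girstmair analysis (Proposition~\ref{prop:Girstmair}) is set up to handle, nor is $\GL_{m-2}$ one of them. The paper resolves this by passing to the \emph{projective} arithmetic and geometric monodromy groups, which do both equal $\PGL_{m-2}(\C)$, establishing in Theorem~\ref{thm:monEvenDir} that the projective \emph{integral} monodromy is $\PGL_{m-2}(\F_\lambda)$ for a density-one set of degree-one valuations $\lambda$, and then invoking the dedicated hypothesis \ref{item:PG} of Theorem~\ref{thm:largeSieve}. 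This hypothesis is usable because $E$ is chosen to contain $\zeta_{m-2}$, so for $\lambda$ of degree $1$ over $\ell\nmid(m-2)$ one has $\ell\equiv 1\pmod{m-2}$ and hence the isomorphism $\PGL_{m-2}(\F_\lambda)\cong\SL_{m-2}(\F_\lambda)$, allowing the $\SL$-Girstmair machinery to be applied. None of this appears in your write-up, and without it the sieve simply does not apply to this family.

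Two further, smaller, discrepancies. First, the paper gets integral monodromy from the transcendental information via the Larsen--Pink theorem (Theorem~\ref{thm:KatzLP}), not from a ``pseudoreflection in the local monodromy'' argument; indeed the paper explicitly notes that strong approximation (which is what such a local argument would feed into) is not available here, unlike in \cite{ChaFioJou16b}. Second, the Goursat--Kolchin--Ribet step and the removal of diagonals are unnecessary: the paper appeals to the surjectivity of $\pi_1(X^t)\to\pi_1(X)^t$ (\cite[Lemma 5.1]{Kow08}), so the monodromy of $\mathcal{F}^{\boxtimes t}$ on all of $X^t$ is automatically $G(\F_\lambda)^t$, and the exclusion of coincidences among the $t$ characters is absorbed by working with subsets $S$ rather than ordered tuples. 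Your account of the conductor bound also blurs the paper's actual route: for this theorem the paper applies Corollary~\ref{cor:largeSieve}\ref{item:largeSievec} (compatible system, reduction to the tame case via a result of Deligne), whose constant $C(X,\rho_{\lambda_0})$ is an Euler-characteristic quantity rather than a Swan-conductor estimate, and whose exponent $\dim G+1=(m-2)^2$ with $G=\PGL_{m-2}$ is exactly what produces the error exponent $q^{-1/(2t(m-2)^2)}$ in the statement.
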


\begin{theorem}[Limiting distribution, expected value]\label{thm:limitDistrExpVar}
  The random vector $X_{m,N}(\bs B)$ admits a compactly supported limiting distribution as $N\to\infty$ with $m>3$ fixed. Namely, it converges in law to a $\R^R$-valued random variable $X_{m}(\bs B)$. Moreover, the latter has mean zero.
\end{theorem}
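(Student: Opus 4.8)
The plan is to mirror, in this short-interval setting, exactly the argument used for Theorem \ref{thm:limitDistrExp}, which in turn adapts the classical machinery of Rubinstein--Sarnak \cite{RubSar94} and its function field incarnation in \cite{Cha08,KeatRud14}. First I would recall the explicit formula expressing $\nu_{n-m}(T^{n-m+1}B_r)$ in terms of the zeros of the relevant $L$-functions. Concretely, by the orthogonality of even Dirichlet characters mod $T^m$ (after the reduction already noted in the text, $\nu_h(A)$ depends only on $B$ of degree $m-1$), one writes
\[
\frac{q^m}{q^{n/2+1}}\Big(\nu_{n-m}(T^{n-m+1}B_r)-\E_{q,n}(\nu_{n-m})\Big)
= -\frac{1}{q^{m-1}-q^{m-2}}\sum_{\chi\ \mathrm{even},\ \chi\neq\chi_0}\bar\chi(B_r)\sum_{j=1}^{\cond(\chi)-1} e(n\theta_{\chi,j}),
\]
up to lower-order terms controlled by the contribution of prime powers; the weights $e(\theta_{\chi,j})$ have absolute value $1$ because the $L$-functions are (essentially) pure of weight $1$ by Katz's results \cite{Kat13}, which is precisely why the chosen normalization makes $X_{m,N}(B_r)$ bounded as $N\to\infty$. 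This is the function field analogue of the explicit formula, and it is valid for $m>3$ so that the equidistribution input of \cite{KeatRud14} applies.

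Next I would deduce the existence of the compactly supported limiting distribution from the almost-periodicity of $n\mapsto X_{m,N}(B_r)$. Each coordinate is a finite linear combination of the functions $n\mapsto e(n\theta_{\chi,j})$, so $X_{m,N}(\bs B)$ is a (vector-valued) trigonometric polynomial in $n$; by the standard Kronecker--Weyl equidistribution argument (as in \cite[Section 2]{Cha08} or \cite{RubSar94}), the sequence $(X_{m,N}(\bs B))_N$ — viewed as empirical distributions on $[1,N]\cap\N$ — converges in law as $N\to\infty$ to the pushforward of Haar measure on the closure $\overline{\{(n\theta_{\chi,j})_{\chi,j}\bmod 1 : n\in\Z\}}$ of the one-parameter subgroup of the torus, under the (continuous, hence bounded) map assembling the coordinates. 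Compact support is then immediate from the triangle inequality and $|e(n\theta_{\chi,j})|=1$. Note this step uses nothing about linear independence — it works for the full set of even characters mod $T^m$ regardless of Hypothesis \ref{hyp:LIchi}, exactly as Theorem \ref{thm:limitDistrExp} did.

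Finally, for the mean I would compute $\E(X_m(\bs B))$ as the $N\to\infty$ average of $X_{m,N}(\bs B)$, which by the explicit formula reduces to $-\frac{1}{q^{m-1}-q^{m-2}}\sum_{\chi}\bar\chi(B_r)\sum_j \lim_{N\to\infty}\frac1N\sum_{n\le N} e(n\theta_{\chi,j})$. Here the key point, and the only place requiring care, is that \emph{none} of the relevant angles $\theta_{\chi,j}$ vanish: a zero angle would correspond to an $L$-function having a ``trivial'' zero at $1$, i.e. a pole of the completed $L$-function, but Katz's determination of the geometric monodromy in \cite{Kat13} shows the sheaves are totally wild / geometrically irreducible of the expected rank with no such degenerate factor, so all $e(n\theta_{\chi,j})$ average to $0$. (This is the analogue, in the short-interval setting, of the computation $\E(X_k(\bs u))=\big(-|\{b:b^2=u_r\}|/2\big)_r$ in Theorem \ref{thm:limitDistrExp}; the difference is that there the functional equation forces certain real zeros $\theta=0$ or $1/2$ coming from the $\bar\chi(b^2)$-type relations among quadratic characters, whereas for even characters mod $T^m$ there is no such phenomenon and the mean is simply $0$.) Hence every term in the average vanishes and $\E(X_m(\bs B))=0$, as claimed. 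The main obstacle is thus purely bookkeeping: correctly extracting the explicit formula with the right normalization and verifying, via Katz's monodromy computation, the absence of angles equal to $0$; once that is in place, the Kronecker--Weyl argument and the mean computation are routine.
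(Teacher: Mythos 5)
Your overall plan correctly mirrors the paper's: derive an explicit formula exhibiting each coordinate $X_{m,N}(B_r)_n$ as a finite trigonometric polynomial in $n$, invoke Kronecker--Weyl together with Helly's selection theorem to get convergence in law to a compactly supported measure, and compute the mean via Ces\`aro averages. The paper indeed states that the proof is ``exactly like'' the proof of Theorem \ref{thm:limitDistrExp}, relying on the explicit formula of Proposition \ref{prop:XkNunVar}. However, there are concrete problems with your version.

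Your explicit formula is wrong in its normalization. The correct formula (Proposition \ref{prop:XkNunVar}) is
\[
X_{m,N}(B)_n \;=\; -\sum_{f=3}^m\ \sum_{\substack{\chi\ (\mathrm{mod}\ T^m)\ \mathrm{even}\\ \cond(\chi)=T^f}}\ \sum_{j=1}^{f-2}\overline\chi(B^*)\,e(n\theta_{\chi,j})\;+\;\frac{1}{q^{n/2}},
\]
with \emph{no} factor of $\frac{1}{q^{m-1}-q^{m-2}}$: the orthogonality normalization $1/\varphi^{\mathrm{ev}}(T^m)=1/q^{m-1}$ is exactly cancelled by the prefactor $q^m/q^{n/2+1}$ in the definition of $X_{m,N}(B)$, which is the whole point of the chosen normalization. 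You also evaluate the character at $B$ rather than the reflected polynomial $B^*$, and the range of $j$ should run to $\deg(\cond\chi)-2$, not $\cond(\chi)-1$ (for even primitive $\chi$ of conductor $T^f$, $L^*(\chi,T)$ has degree $f-2$). These are bookkeeping errors, but your leftover normalization constant is a genuine mistake in the main formula.

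More substantively, your explanation of why the mean is zero misidentifies where the bias in Theorem \ref{thm:limitDistrExp} comes from. You claim the $-|\{b:b^2=u_r\}|/2$ there arises because ``the functional equation forces certain real zeros $\theta=0$ or $1/2$ coming from the $\bar\chi(b^2)$-type relations among quadratic characters,'' and that for even characters modulo $T^m$ ``there is no such phenomenon.'' That is not what happens: in Proposition \ref{prop:XkNun}, the $-1/2$ comes from the contribution of prime \emph{squares} $P^2$ to $N_{k,n}(u)$ (the explicit $-\delta_{n\ \mathrm{even}}|\{b\in\Sb_{k,q}^1:b^2=u\}|$ term), which survives because $N_{k,n}$ counts primes without the von Mangoldt weight. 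Here $\nu_h$ carries the von Mangoldt weight $\Lambda$, so the prime-power contributions are absorbed into the count and no such term appears; this is exactly the paper's remark after Theorem \ref{thm:limitDistrExpVar} (``There is no bias here\,\ldots simply because the von Mangoldt weight was kept''). The distinction is arithmetic, not a statement about $L$-function zeros.

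Finally, you are right that the Ces\`aro averages $\frac1N\sum_{n\le N}e(n\theta_{\chi,j})$ only tend to $0$ if $\theta_{\chi,j}\notin\Z$ (and indeed the paper's displayed bound $\ll \kappa q^\kappa/N$ in the proof of Theorem \ref{thm:limitDistrExp} implicitly uses this), but your justification via ``Katz's determination of the geometric monodromy'' is a non sequitur. Large geometric monodromy of the family of sheaves tells you how Frobenius conjugacy classes equidistribute as $q\to\infty$; it says nothing about whether a \emph{specific} $\Theta_\chi$ has $1$ as an eigenvalue. Full symplectic or general-linear monodromy is entirely compatible with individual members of the family having a central zero, and over function fields central vanishing of $L$-functions does occur. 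If you want to rigorously exclude $\theta_{\chi,j}=0$ you need an actual nonvanishing input, not a monodromy argument; as written, this step is not justified.
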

\begin{remark}
  There is no bias here, unlike in Theorem \ref{thm:limitDistrExp}, simply because the von Mangoldt weight was kept.
\end{remark}

\begin{theorem}[Continuity, symmetry]\label{thm:distrPropGPLIchi}
  If Hypothesis \ref{hyp:LIchi} holds and $R<m/2-1$, the distribution of $X_m(\bs B)$ is absolutely continuous, and symmetric with exchangeable components. In particular,
  \[\lim_{N\to\infty}\P\Big(X_{m,N}(B_1)<\dots<X_{m,N}(B_R)\Big)=\frac{1}{R!}.\]
\end{theorem}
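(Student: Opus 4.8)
The plan is to deduce Theorem~\ref{thm:distrPropGPLIchi} from Hypothesis~\ref{hyp:LIchi} together with the explicit description of the limiting distribution furnished by Theorem~\ref{thm:limitDistrExpVar}, following the Rubinstein--Sarnak/Kurlberg--Rudnick template as adapted to function fields by Cha \cite{Cha08} (and parallel to our proof of Theorem~\ref{thm:distrPropGPLI}). First I would recall the spectral expansion: using the cohomological interpretation of the relevant $L$-functions (via Katz \cite{Kat13}), $\nu_{n-m}(T^{n-m+1}B)-\E_{q,n}(\nu_{n-m})$ is, after normalization, a sum over even characters $\chi$ modulo $T^m$ of terms $\overline{\chi(B)}\sum_{j=1}^{d-1} e(n\theta_{\chi,j})$ (with $d=\cond(\chi)$), plus a negligible contribution from imprimitive/low-conductor pieces and the trivial character. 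Hence the $r$-th component of $X_{m,N}(\bs B)$ is a finite trigonometric polynomial in the $N$ variables $n$ whose frequencies are exactly the angles $\theta_{\chi,j}$.

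Second I would invoke Hypothesis~\ref{hyp:LIchi}: the angles $\{\theta_{\chi,j}\}$ together with $1$ are $\Q$-linearly independent, so by Weyl equidistribution the vector $(n\theta_{\chi,j} \bmod 1)_{\chi,j}$ equidistributes in the torus $\T^{d^\ast}$ (of dimension $d^\ast = \sum_{\chi}(\cond(\chi)-2)$, say) as $n$ ranges over $[1,N]\cap\N$. Therefore $X_{m,N}(\bs B)$ converges in law to the pushforward of Haar measure on $\T^{d^\ast}$ under the explicit $\R^R$-valued trigonometric map $\Phi$ recording the $R$ component sums; this identifies $X_m(\bs B)$ as that pushforward. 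Absolute continuity then follows by a standard argument: the condition $R<m/2-1$ guarantees that for each $r$ the character $\chi$ can be chosen so that $\chi(B_r)$ are ``independent enough'', and more to the point each component $X_m(\bs B)_r$ is already a sum of at least, say, $(d-1)\ge$ several independent nondegenerate circular summands (each $e(\theta_{\chi,j})+e(-\theta_{\chi,j})$-type contribution has an absolutely continuous law and nonvanishing Fourier transform), so the joint characteristic function of $X_m(\bs B)$ decays and lies in $L^1$, whence a density $f$ exists by Fourier inversion --- exactly as in \cite[proof of Thm 1.2]{Dev18} or our Theorem~\ref{thm:distrPropGPLI}\eqref{item:thm:distrPropGPLIABsCont}. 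The counting constraint $R<m/2-1$ is precisely what makes enough distinct frequencies available across the $R$ coordinates simultaneously.

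Third, symmetry and exchangeability: since $X_m(\bs B)$ has mean zero (Theorem~\ref{thm:limitDistrExpVar}) there is no recentering to do. The functional equation of the $L$-functions pairs $\theta_{\chi,j}$ with $-\theta_{\chi,j}$ and $\chi$ with $\bar\chi$; combined with the fact that Haar measure on $\T^{d^\ast}$ is invariant under $\theta\mapsto-\theta$, one gets $X_m(\bs B)\sim -X_m(\bs B)$. Exchangeability under $\sigma\in\Sf_R$ follows because the frequency sets attached to the coordinates $B_1,\dots,B_R$ are \emph{identical} (all equal to the full set of angles $\theta_{\chi,j}$) --- only the phases $\chi(B_r)$ differ --- and those phases can be absorbed by the translation-invariance of Haar measure on the torus, so permuting coordinates induces a measure-preserving change of variables. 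Finally, given a symmetric, exchangeable, absolutely continuous distribution, the probability of any fixed ordering of the $R$ coordinates is $1/R!$ (ties have measure zero by absolute continuity), and the claimed identity for $\lim_{N\to\infty}\P(X_{m,N}(B_1)<\dots<X_{m,N}(B_R))$ follows from convergence in law plus the portmanteau theorem applied to the open set $\{x_1<\dots<x_R\}$ whose boundary is Lebesgue-null.

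The main obstacle is the passage from ``$\Q$-linear independence of the angles'' to the structural independence needed for absolute continuity of the \emph{joint} (not just marginal) law, within the narrow range $R<m/2-1$: one must carefully bookkeep which angles $\theta_{\chi,j}$ contribute nontrivially to each coordinate and check the relevant Jacobian/Fourier nonvanishing so that $\widehat{f}\in L^1(\R^R)$ --- the same delicate point that forces the restriction in Theorem~\ref{thm:distrPropGPLI}. A secondary technical nuisance is controlling the contribution of non-primitive characters and the error terms in the explicit formula uniformly enough that they vanish in the $N\to\infty$ limit with $m$ fixed; this is handled exactly as in \cite{KeatRud14,Cha08} using $p>m$ and $m$ odd (which ensures the monodromy of Katz \cite{Kat13} is the full symplectic/unitary group so no exceptional algebraic relations among the $\theta_{\chi,j}$ intrude beyond the trivial functional-equation ones).
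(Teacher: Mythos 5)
Your overall route — spectral expansion (Proposition~\ref{prop:XkNunVar}), Kronecker--Weyl, then Fourier analysis of the limit measure à la Rubinstein--Sarnak/Cha/Martin--Ng — is exactly the paper's, since the paper's Section~4 just asserts that the proof is ``exactly like'' that of Theorem~\ref{thm:distrPropGPLI} in Section~\ref{subsec:propLimDistrGSHExp}. But two of your steps don't actually close.

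First, for absolute continuity you argue that each component $X_m(\bs B)_r$ is ``a sum of several independent nondegenerate circular summands with absolutely continuous law'', hence the joint characteristic function is in $L^1$. That inference is not valid: having absolutely continuous marginals (or even each $e(\theta)+e(-\theta)$ summand having $L^1$ Fourier transform) says nothing about the \emph{joint} law being absolutely continuous, nor about decay of $\hat\mu_m(\bs t)$ along every direction of $\R^R$. You acknowledge the gap in your ``main obstacle'' paragraph, but you don't close it. The actual mechanism, mirroring the proof of Theorem~\ref{thm:distrPropGPLI}, is: under Hypothesis~\ref{hyp:LIchi} the torus $\overline\Gamma$ is full, so $\hat\mu_m(\bs t)$ factors as a product of $J_0(2|\bs t\cdot\chi(\bs B^*)|)$ over primitive even $\chi\pmod{T^m}$; one then uses $|J_0(z)|\le\min(1,\sqrt{2/(\pi|z|)})$, and the crucial point that \emph{orthogonality of Dirichlet characters} (applied to the $B_r^*$, which are distinct) gives $\frac1{|S|}\sum_{\chi\in S}|\bs t\cdot\chi(\bs B^*)|^2\gg\|\bs t\|^2$. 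This forces $|\hat\mu_m(\bs t)|\ll\|\bs t\|^{-(m-2)/2}$ outside a bounded set, and the exponent $(m-2)/2>R$, i.e.\ $R<m/2-1$, is precisely what makes $\int_{\R^R}|\hat\mu_m|\,d\bs t$ converge. That orthogonality step and the resulting exponent budget are what actually pin down the restriction $R<m/2-1$, not a per-coordinate ``enough circular summands'' count.

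Second, your exchangeability argument — ``the phases $\chi(B_r)$ can be absorbed by translation-invariance of Haar measure on the torus'' — does not work. A translation $\theta_{\chi,j}\mapsto\theta_{\chi,j}+c_\chi$ rescales all coordinates of $(\chi(B_1),\dots,\chi(B_R))$ by the same unimodular constant $e(c_\chi)$; it cannot permute them, so it does not conjugate $\Phi$ to $\sigma\circ\Phi$. Symmetry and exchangeability are instead read directly off the Bessel-product formula for $\hat\mu_m$: each factor depends only on $|\bs t\cdot\chi(\bs B^*)|$, $J_0$ is real and even, and for the trivial-mean case (here $\E X_m(\bs B)=\bs 0$) this immediately gives $\hat\mu_m(-\bs t)=\hat\mu_m(\bs t)$ and $\hat\mu_m(\sigma\bs t)=\hat\mu_m(\bs t)$. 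That is what the paper means by ``follows from the expression for $\hat\mu$''. Finally, a small hypothesis mix-up: Theorem~\ref{thm:distrPropGPLIchi} is conditional on Hypothesis~\ref{hyp:LIchi} and does not need $p>m$ or $m$ odd (nor any monodromy input); those assumptions enter only in the unconditional Theorem~\ref{thm:genericLIGPchi} and Corollary~\ref{cor:distrPropGPweakchi}, and in this family the relevant monodromy from \cite{Kat13} is special linear/projective linear, not symplectic.
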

Towards an unconditional Theorem \ref{thm:distrPropGPLIchi}, we obtain:
\begin{corollary}[of Theorem \ref{thm:limitDistrExpVar}]\label{cor:distrPropGPweakchi}
  Assuming $m$ odd and $p>m$, the limiting distribution $X_m(\bs B)$ from Theorem \ref{thm:limitDistrExpVar} is:
  \begin{enumerate}
  \item continuous: $\P(X_m(\bs B)=\bs a)=0$ for any $\bs a\in\R^R$. In particular, for $B\in\F_q[T]$ of degree $m-1$,
    \[\lim_{N\to\infty} \P(X_{m,N}(B)>0)=\P(X_{m}(B)>0).\]
  \item a pushforward of the Lebesgue measure on a torus of dimension
      \[\gg_{\varepsilon,m}\left(\log(q^{m-1})\right)^{1-\varepsilon},\quad\text{ for any }\varepsilon>0.\]
  \end{enumerate}
\end{corollary}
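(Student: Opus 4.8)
The plan is to deduce Corollary~\ref{cor:distrPropGPweakchi} from Theorem~\ref{thm:limitDistrExpVar} together with the generic linear independence input of Theorem~\ref{thm:genericLIGPchi}, following the same scheme used for Corollary~\ref{cor:distrPropGPweak}. First I would recall the explicit description of the limiting random variable $X_m(\bs B)$ coming from the proof of Theorem~\ref{thm:limitDistrExpVar}: by the cohomological interpretation of $\nu_{n-m}(T^{n-m+1}B_r)$ in terms of the roots \eqref{eq:eigenvalueschi} of $L$-functions of even Dirichlet characters mod $T^m$ (which is where $p>m$ and $m$ odd enter, via Katz's determination of the relevant monodromy group), each coordinate $X_m(B_r)$ is an almost periodic function of the $\theta_{\chi,j}$, i.e. of the form $\sum_{\chi,j} c_{\chi,j,r}\, e(\theta_{\chi,j}) + \overline{(\cdots)}$ for explicit Fourier-type coefficients, evaluated at a point whose trajectory $(n\theta_{\chi,j})_{n}$ equidistributes on a subtorus of $(\R/\Z)^{\#}$. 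The closure of this trajectory is precisely the torus cut out by the $\Z$-linear relations among the $\theta_{\chi,j}$ and $1$; hence $X_m(\bs B)$ is a pushforward of normalized Lebesgue measure on that torus, and its dimension equals the number of $\theta_{\chi,j}$ occurring in the support of the $c_{\chi,j,r}$ minus the dimension of the space of relations among them.

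Next I would invoke Theorem~\ref{thm:genericLIGPchi} with $t$ chosen as a slowly growing function of $q$ — concretely $t \asymp (\log(q^{m-1}))^{1-\varepsilon}$, which is $o(\log(q^{m-1}))$ — to produce a single set $S$ of $t$ even characters mod $T^m$ for which the corresponding $\theta_{\chi,j}$, together with $1$, are $\Q$-linearly independent; this is possible because for $q$ large the fraction of good sets is $1+o(1)$, in particular positive. Here one must be slightly careful that the characters in $S$ have conductor at least $2$ (so that the $L$-function is nontrivial and the roots \eqref{eq:eigenvalueschi} exist) and that the number of roots $\sum_{\chi\in S}(\cond(\chi)-2)$ contributed is $\gg_{\varepsilon,m} (\log(q^{m-1}))^{1-\varepsilon}$; since the conductor of an even character mod $T^m$ is generically close to $m$, restricting $S$ to primitive (or near-primitive) characters loses only a constant factor depending on $m$, and one absorbs this into the implied constant. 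Choosing the centers $\bs B$, equivalently tuning the coefficients $c_{\chi,j,r}$ (or simply taking $R=1$ for part~(1)), so that all these linearly independent $\theta_{\chi,j}$ actually appear with nonzero coefficient, the torus supporting $X_m(\bs B)$ has dimension at least this many coordinates, which gives part~(2). Part~(1), continuity, then follows immediately: a pushforward of Lebesgue measure on a torus of dimension $\ge 1$ is atomless, so $\P(X_m(\bs B)=\bs a)=0$ for every $\bs a$, and the stated consequence $\lim_N \P(X_{m,N}(B)>0)=\P(X_m(B)>0)$ is the portmanteau theorem applied to the boundary $\{X_m(B)=0\}$, which now has measure zero.

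The main obstacle I anticipate is bookkeeping rather than conceptual: one has to make sure the \emph{same} set $S$ (and hence the same torus) simultaneously witnesses both the $\Q$-linear independence from Theorem~\ref{thm:genericLIGPchi} and the nonvanishing of the relevant Fourier coefficients $c_{\chi,j,r}$ appearing in the explicit formula for $X_m(\bs B)$ from the proof of Theorem~\ref{thm:limitDistrExpVar}; this requires checking that the cohomological formula for $\nu_h$ does not conspire to kill the contribution of the characters in $S$ for the chosen centers $\bs B$, which in turn uses that the relevant character sums $\sum_{\chi} \chi(\cdots)$ detecting short intervals are nondegenerate — a standard orthogonality computation over $(\F_q[T]/T^m)^\times$. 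A secondary point to get right is the dimension count: one wants the torus dimension to grow like $(\log(q^{m-1}))^{1-\varepsilon}$ uniformly, so $t$ must be pushed as large as the error term in Theorem~\ref{thm:genericLIGPchi} allows while keeping the proportion of good sets bounded away from $0$; since that error is $O_{p,m}(C_{m,p}^t\log q\cdot q^{-1/(2t(m-2)^2)})$, the constraint is essentially $t^2 = o(\log q / \log\log q)$, comfortably accommodating $t \asymp (\log q)^{1-\varepsilon}$, and one records the resulting dimension lower bound with the implied constant depending on $\varepsilon$ and $m$ as stated.
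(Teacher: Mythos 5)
Your approach to part (2) matches the paper's: lower-bound the dimension of the Kronecker--Weyl torus by exhibiting, via Theorem~\ref{thm:genericLIGPchi} applied with $t$ growing slowly in $q$, a subset $S$ of $t$ characters whose $\theta_{\chi,j}$ together with $1$ are $\Q$-linearly independent. (Note, as a side remark, that for part (1) one only needs $t=1$.)

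Your argument for part (1) has a genuine gap. You assert that ``a pushforward of Lebesgue measure on a torus of dimension $\ge 1$ is atomless,'' but this statement is false without further hypotheses on the pushforward map: the constant map, or a map constant on a set of positive measure, pushes Lebesgue measure to a measure with atoms. What is true --- and what the paper uses --- is that the pushforward is atomless when the map is real-analytic and non-constant on the torus, and the verification of non-constancy is the crux. The paper (in the proof of Corollary~\ref{cor:distrPropGPweak}, which is quoted verbatim for Corollary~\ref{cor:distrPropGPweakchi}) makes this precise via Devin's method: reduce to $R=1$, apply Wiener's lemma (continuity is equivalent to the Ces\`aro average of $|\hat\mu|^2$ tending to $0$), bound $|\hat\mu(t)|$ by an oscillatory integral $\left|\int_{\overline\Gamma}\exp(it\phi)\,d\bs x\right|$ with $\phi$ an explicit trigonometric polynomial, argue from the generic linear independence (applied with $t=1$) that some $\theta_{\chi,j}\notin\Q$ --- hence that $\phi\colon\overline\Gamma\to\R$ is analytic and non-constant since $\overline\chi(B^*)\neq 0$ --- and invoke Stein's scaling principle to obtain power decay $|\hat\mu(t)|\ll|t|^{-\alpha}$. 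Your version skips the analyticity/non-constancy justification entirely, which is exactly the content of the argument.

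A secondary point: your worry about ``tuning the coefficients $c_{\chi,j,r}$'' or ``choosing the centers $\bs B$'' is misplaced. The corollary is stated for \emph{given} monic $B_1,\dots,B_R$ of degree $m-1$, not for suitably chosen ones, and the relevant coefficients in Proposition~\ref{prop:XkNunVar} are $\overline\chi(B^*)$, which are roots of unity and hence automatically nonzero. So no ``bookkeeping'' of $S$ against a nonvanishing condition is required, and any attempt to choose $\bs B$ would in fact be proving a weaker statement than the one claimed.
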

\begin{remark}
  The assumption that $m$ is odd is technical, to get the integral monodromy in Theorem \ref{thm:monEvenDir}. It is anyway mild, since if $m$ is even, one may as well look at shorter intervals of odd size $m-1$.
\end{remark}
\begin{remark}\label{rem:weakExplicit2}
  Again, if one wants explicit dependency of $m,p$ in the base of $t$ in Theorem \ref{thm:genericLIGPchi}, at the price of a weaker error, one may replace the latter by $O_{p,m} \left(\frac{(C(m+1)^{m+3})^t\log\log{q}}{\log{q}}\right)$ with $C$ absolute.
\end{remark}


\subsection{Outline of the strategy, previous works, and organization of the paper}\label{subsec:methods}

The existence and properties of the limiting distribution under linear independence hypotheses (Theorems \ref{thm:limitDistrExp} \ref{thm:distrPropGPLI}, \ref{thm:limitDistrExpVar} and \ref{thm:distrPropGPLIchi}) follow the methods developed in \cite{RubSar94,Cha08,MarNg17}. The continuity statement in Corollary \ref{cor:distrPropGPweak}, under weaker results than full linear independence, is obtained through an idea of Devin \cite{Dev18,DevMeng18}.

The main results are then Theorems \ref{thm:linIndepExp}, \ref{thm:genericLIGP} and \ref{thm:genericLIGPchi} on generic linear independence. Combining his large sieve for Frobenius over finite fields \cite{KowLS06,KowLargeSieve08} with a method of Girstmair \cite{Girst82,Girst99}, Kowalski proved \cite{Kow08} that a linear independence condition holds generically in some families of $L$-functions of curves over finite fields. This was recently extended by Cha, Fiorilli and Jouve \cite{ChaFioJou16b} to certain families of elliptic curves over function fields, where the underlying symmetry is orthogonal instead of being symplectic.

We use similar ideas to prove Theorems \ref{thm:linIndepExp} and \ref{thm:genericLIGP}, with the families of curves replaced by families of exponential sums or characters. More precisely, by work of Deligne and Katz \cite{DelEC,Katz16}, there are families of $\ell$-adic sheaves on $\G_m$ (resp. on a variety parametrizing primitive characters $\Xi$ or $\chi$ as above) such that the (reversed) characteristic polynomial of the Frobenius acting on a stalk yields the roots (resp. $L$-function) of the corresponding exponential sums (resp. characters).

Unlike in \cite{Kow08,ChaFioJou16b}, these are not sheaves of $\Z_\ell$-modules, but of $\Oc_\lambda$-modules, for $\lambda$ a valuation on the ring of integers $\Oc$ of a number field. In the work of Kowalski and Cha--Fiorilli--Jouve, the monodromy structure is symplectic or orthogonal (the latter being the source of complications handled by Jouve); here, it is either special linear, symplectic or projective general linear.

Another difficulty arises in bounding sums of Betti numbers appearing in the large sieve for Frobenius, because certain sheaves are not defined on curves nor have tame ramification, as assumed by Kowalski and Cha--Fiorilli--Jouve. This yields Theorem \ref{thm:largeSieve}, and answers in this case a question of Kowalski (\cite[Remark 4.8]{KowLS06}).

To apply this variant of the large sieve for Frobenius, we also need information on integral monodromy groups of the sheaves, whereas only information about the monodromy groups over $\C$ (i.e. after taking a Zariski closure) is a priori available from Katz's work \cite{KatzGKM,KatzESDE,Kat13,Katz16}. This is overcome using deep results of Larsen and Pink through ideas of Katz (or more precise results in the case of Kloosterman sums). Unlike in \cite{ChaFioJou16b}, strong approximation for arithmetic groups cannot be used.
\begin{remark}[Frobenius tori]
  As explained in \cite[Section 7]{Kow08}, another way to get generic linear independence results is by applying an effective version of Chebotarev's density theorem with Serre's theory of Frobenius tori. However, as explained in \cite[p. 54]{Kow08}, controlling the uniformity with respect to the size of the subsets/tuples considered (crucial for the questions we consider) is more subtle.
\end{remark}
\begin{remark}[Prime polynomials in arithmetic progressions]
  In \cite{KeatRud14}, Keating and Rudnick also study the variance of prime polynomials in arithmetic progressions, and get as well an asymptotic expression (see \cite[Theorem 2.2]{KeatRud14}). In one of the ranges, this uses another equidistribution result of Katz \cite{Kat13Primitive}. The latter is more complicated, relying on the ideas developed in \cite{KatzMellin12}, because the family involved is not parametrized by an algebraic variety. While results similar to those of Section \ref{subsec:PPSI} could probably be obtained (see also \cite{Cha08}), we leave that to future work for this reason. 
\end{remark}

In Sections \ref{sec:KlBi}, \ref{sec:anglesGP} and \ref{sec:variancePPSI}, respectively for Kloosterman/Birch sums, Gaussian prime polynomials, and prime polynomials in short intervals, we:
\begin{enumerate}
\item Give the cohomological interpretations due to Katz, which gives rise to the eigenvalues from \eqref{eq:rootsExpSums}, \eqref{eq:eigenvaluesXi} and \eqref{eq:eigenvalueschi} respectively.
\item For Gaussian prime polynomials and prime polynomials in short intervals:
  \begin{enumerate}
  \item Show the existence of the limiting distributions (Theorems \ref{thm:limitDistrExp} and \ref{thm:limitDistrExpVar}).
  \item Prove the additional properties of the distributions under Hypotheses \ref{hyp:LI} and \ref{hyp:LIchi} (Theorems \ref{thm:distrPropGPLI} and \ref{thm:distrPropGPLIchi}).
  \end{enumerate}
\item Prove Corollaries \ref{cor:fUniform} and \ref{cor:lowerBounds}, \ref{cor:fBias} and Corollaries \ref{cor:distrPropGPweak}, \ref{cor:distrPropGPweakchi}, from the generic linear independence Theorems \ref{thm:linIndepExp}, \ref{thm:genericLIGP} and \ref{thm:genericLIGPchi} respectively.
\end{enumerate}
Finally, Sections \ref{sec:largeSieve}, \ref{sec:genericMax} and \ref{sec:proofGenericLI} are dedicated to proving these generic linear independence statements.

\subsection{Notations}
For a prime $p\ge 7$ and a field $E$ with ring of integers $\Oc$, we let $\Spec_{1}(\Oc)$ (resp. $\Spec_{p}(\Oc)$) be the set of all non-zero prime ideals (equivalently, valuations on $\Oc$) having degree $1$ (resp. not lying above $p$), and $\Spec_{1,p}(\Oc)=\Spec_1(\Oc)\cap\Spec_p(\Oc)$. If $\lambda\in\Spec_{1,p}(\Oc)$, we denote by $E_\lambda, \Oc_\lambda$ the completions, and $\F_\lambda\cong\Oc/\lambda$ the residue field. Note that $\F_\lambda\cong\F_\ell$, where $\ell$ is the prime above which $\lambda$ lies.
\subsection{Acknowledgements}
The author thanks Lucile Devin, Michele Fornea, Javier Fresán, Florent Jouve and Will Sawin for helpful discussions and comments. Will Sawin in particular provided a better way to bound the sums of Betti numbers in the large sieve, leading to stronger results; the idea and proof of Theorem \ref{thm:largeSieve}\ref{item:compSystem} are due to him. We thank the organizers of the 2019 Shaoul fund IAS Function field arithmetic workshop in Tel-Aviv for providing the opportunity for some of these exchanges. We are grateful to the anonymous referees who provided helpful and detailed comments to improve the manuscript. The author was partially supported by Koukoulopoulos' Discovery Grant 435272-2013 of the Natural Sciences and Engineering Research Council of Canada, and by Radziwiłł's NSERC DG grant and the CRC program.

\section{Kloosterman sums and Birch sums}\label{sec:KlBi}

\subsection{Cohomological interpretation}
\begin{theorem}[Deligne, Katz]\label{thm:Klconstr}
  Let $E=\Q(\zeta_{4p})$, with ring of integers $\Oc$. For every $\lambda\in\Lambda:=\Spec_{p}(\Oc)$, there exists:
  \begin{enumerate}
  \item for every integer $r\ge 2$, a lisse sheaf $\Klc_{r,\lambda}$ on $\G_{m,\F_p}$ of free $\Oc_\lambda$-modules, of rank $r$, pure of weight $0$, such that for every finite field $\F_q$ of characteristic $p$ and $x\in\F_q^\times$,
  \[\tr \left(\Frob_{\F_q} \mid \left(\Klc_{r,\lambda}\right)_{x}\right)=\Kl_{r,q}(x),\]
  the normalized hyper-Kloosterman sum of rank $r$ defined in \eqref{eq:Klr}. Moreover, the family $(\Klc_{r,\lambda})_{\lambda\in\Lambda}$ forms a compatible system\footnote{We recall that this means that for every $\lambda\in\Lambda$, every finite field $\F_q$ of characteristic $p$ and every $x\in\F_q^\times$, the reverse characteristic polynomial $\det(1-T\Frob_{\F_q}\mid \left(\Klc_{r,\lambda}\right)_{x})\in\Oc_\lambda[T]$ has coefficients in $E$ that moreover do not depend on $\lambda$; see \cite[Section II]{Katz01}.}.
\item\label{item:Biconstr} a lisse sheaf $\Bic_\lambda$ on $\G_{m,\F_p}$ of free $\Oc_\lambda$-modules, of rank $2$, pure of weight $0$, such that for every field $\F_q$ of characteristic $p$ and $x\in\F_q^\times$,
  \[\tr \left(\Frob_{\F_q} \mid \left(\Bic_{\lambda}\right)_{x}\right)=\Bi_{q}(x),\]
  the normalized Birch sum defined in \eqref{eq:Bi}. Moreover, the family $(\Bic_{\lambda})_{\lambda\in\Lambda}$ forms a compatible system.
  \end{enumerate}
\end{theorem}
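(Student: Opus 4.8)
This is, in essence, a recollection of Deligne's construction of the hyper-Kloosterman sheaves \cite{KatzGKM} and of Katz's construction of the Birch sheaf \cite{KatzESDE} (see also Livné \cite{Liv87}); the plan is to recall them, to observe that they can be carried out with $\Oc_\lambda$-coefficients rather than over $\overline{\Q}_\ell$, and to verify the compatible-system assertion. Fix once and for all a nontrivial additive character $\psi\colon\F_p\to\mu_p$. For $\lambda\in\Lambda=\Spec_p(\Oc)$ lying over a rational prime $\ell\neq p$, the inclusions $\mu_p\subset E=\Q(\zeta_{4p})\subset E_\lambda$ let us view $\psi$ as valued in $\Oc_\lambda^\times$; moreover $E$ contains $i$ and a quadratic Gauss sum (which lies in $\Q(\zeta_p)$ and squares to $\pm p$), hence $\sqrt p\in E$. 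One then has the Artin--Schreier sheaf $\mathcal{L}_\psi$ on $\mathbb{A}^1_{\F_p}$: a lisse sheaf of free rank-one $\Oc_\lambda$-modules, pure of weight $0$, geometrically nontrivial, with $\tr(\Frob_{\F_q}\mid(\mathcal{L}_\psi)_c)=\psi(\tr_{\F_q/\F_p}(c))$ for $c\in\F_q$; pulling back along $x\mapsto x^3$ gives $\mathcal{L}_{\psi(x^3)}$. Since $\sqrt p\in E$, there is also a geometrically constant lisse rank-one $\Oc_\lambda$-sheaf $\Oc_\lambda(1/2)$ on $\Spec\F_p$ (a chosen square root of the Tate twist $\Oc_\lambda(1)$), on which $\Frob_{\F_q}$ acts by $q^{-1/2}$.

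For Kloosterman, let $\pi\colon\G_m^r\to\G_m$ be the product map and $\sigma\colon\G_m^r\to\mathbb{A}^1$ the sum map, and set $\widetilde{\Klc}_{r,\lambda}:=R^{r-1}\pi_!\,\sigma^*\mathcal{L}_\psi$. By Deligne's analysis, carried out inductively via iterated multiplicative $!$-convolution \cite[Ch.~4]{KatzGKM}, one has $R^i\pi_!\,\sigma^*\mathcal{L}_\psi=0$ for $i\neq r-1$, so $\widetilde{\Klc}_{r,\lambda}$ is lisse of rank $r$ on $\G_{m,\F_p}$ and pure of weight $r-1$; the concentration in the single degree $r-1$ (Artin's vanishing theorem disposes of the degrees below $r-1$, the wild ramification of the Artin--Schreier sheaf, after Deligne and Katz, of the degree above it), applied also to the $\F_\lambda$-reduction, shows that the stalk at $x$, namely $H^{r-1}_c$ of the fibre $\{x_1\cdots x_r=x\}\cong\G_m^{r-1}$ with $\Oc_\lambda$-coefficients, is $\Oc_\lambda$-free of rank $r$. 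The Grothendieck--Lefschetz trace formula on this fibre then gives $\tr(\Frob_{\F_q}\mid(\widetilde{\Klc}_{r,\lambda})_x)=q^{(r-1)/2}\Kl_{r,q}(x)$, up to the overall sign $(-1)^{r-1}$ (present only for $r$ even, absorbed into the conventions and in any case immaterial for what follows, since it merely shifts all the angles by $1/2$). Setting $\Klc_{r,\lambda}:=\widetilde{\Klc}_{r,\lambda}\otimes\Oc_\lambda(1/2)^{\otimes(r-1)}$ turns this into a lisse sheaf of free rank-$r$ $\Oc_\lambda$-modules, pure of weight $0$, with the asserted trace function. For Birch, let $j\colon\G_m\hookrightarrow\mathbb{A}^1$ be the inclusion and take $\widetilde{\Bic}_\lambda$ to be the restriction to $\G_{m,\F_p}$ (where it is lisse, by \cite{KatzESDE}) of the naive $\psi$-Fourier transform of $j_!\big(\mathcal{L}_{\psi(x^3)}|_{\G_m}\big)$; by Katz's study of cubic exponential sums \cite{KatzESDE} (see also \cite{Liv87}), this is lisse of rank $2$ of free $\Oc_\lambda$-modules, pure of weight $1$, with trace function $q^{1/2}\Bi_q(x)$ at $x$ (again up to an immaterial overall sign), and one sets $\Bic_\lambda:=\widetilde{\Bic}_\lambda\otimes\Oc_\lambda(1/2)$.

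Finally, for the compatible-system assertion, fix $x\in\F_q^\times$ and write $\det(1-T\Frob_{\F_q}\mid(\Klc_{r,\lambda})_x)=\prod_{i=1}^r(1-\alpha_iT)$; for every $m\ge1$ one has $\sum_i\alpha_i^m=\tr(\Frob_{\F_{q^m}}\mid(\Klc_{r,\lambda})_x)=\Kl_{r,q^m}(x)\in\Q(\zeta_p)\subset E$, a complex number patently independent of $\lambda$. By Newton's identities the coefficients of the reverse characteristic polynomial are polynomials with rational coefficients in these power sums, hence lie in $E$ and do not depend on $\lambda$, which is the compatibility of \cite[Section~II]{Katz01}; the coherence across $\lambda$ is guaranteed by having fixed a single $\psi$ at the outset, and the same reasoning handles $(\Bic_\lambda)_\lambda$. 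The step needing the most care — everything else being a matter of quoting the references — is the \emph{integral} ($\Oc_\lambda$-)structure: one must know that $R^{r-1}\pi_!$ and the Fourier transform of sheaves of \emph{free} $\Oc_\lambda$-modules are again lisse with \emph{free} stalks of the predicted rank. This follows from the vanishing of all the neighbouring $R^i$, so that no torsion is produced in the relevant universal-coefficients/base-change exact sequences; that vanishing rests on Artin's theorem in low degrees and on Deligne's and Katz's wild-ramification computations in high degrees, all insensitive to the coefficient ring. The only other genuinely needed input is $\sqrt p\in E$, so that the half Tate twist exists over $\Oc_\lambda$ — which is exactly why one takes $E=\Q(\zeta_{4p})$ rather than $\Q(\zeta_p)$.
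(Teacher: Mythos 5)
Your argument is correct and is essentially an unpacking of the paper's own proof, which simply cites \cite[Theorem 4.1.1/Section 8.9]{KatzGKM} for the hyper-Kloosterman sheaves and \cite[7.12]{KatzESDE} (plus \cite{KatzGKM} for the $\Oc_\lambda$-linear Fourier transform) for Birch sums, observing that $\sqrt{p}\in\Z[\zeta_{4p}]$ by quadratic Gauss sums so that the half Tate twist exists integrally. Two small corrections: the containment should read $\Kl_{r,q^m}(x)\in\Q(\zeta_p,\sqrt{p})=\Q(\zeta_{4p})=E$ rather than $\Q(\zeta_p)$ (which is false for $r$ even when $p\equiv 3\pmod 4$, and is exactly the reason $E=\Q(\zeta_{4p})$ is taken), and the sign $(-1)^{r-1}$ is not ``immaterial'' but is genuinely absorbed by a suitable choice of the square root of the Tate twist, permitted because $i\in E$.
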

\begin{proof}
  \begin{enumerate}
  \item This is \cite[Theorem 4.1.1/Section 8.9]{KatzGKM}. To normalize by a Tate twist, we enlarge the ring of definition to $\Z[\zeta_{4p}]$, which is enough since $\sqrt{p}\in\Z[\zeta_{4p}]$ by the evaluation of quadratic Gauss sums (see \cite[11.0]{KatzGKM}).
  \item This is contained in \cite[7.12]{KatzESDE} (see also \cite[Part 3]{KatzMonodromyFamES}), along with \cite{KatzGKM} for the definition over $\Oc_\lambda$ of the $\ell$-adic Fourier transform.
  \end{enumerate}  
\end{proof}

The roots of the characteristic polynomial of $\Frob_{\F_q}$ acting on the stalks at $x\in\F_q^\times$ of any of the sheaves in the system $(\Klc_{r,\lambda})_{\lambda\in\Lambda}$, resp. $(\Bic_{\lambda})_{\lambda\in\Lambda}$, are then the $e(\theta_{i,f,q}(x))\in\C$ ($1\le i\le r$) giving \eqref{eq:rootsExpSums}, when $f=\Kl_{r,q}$, resp. $f=\Bi_{q}$ ($r=2$).\\

We now prove the three corollaries of Theorem \ref{thm:linIndepExp} (generic linear independence of the roots) stated in Section \ref{subsec:expSums}.

\subsection{Joint uniform distribution: Corollaries \ref{cor:fUniform} and \ref{cor:fBias}} 

  Let $a,b\in\F_q^\times$ be such that the conclusion of Theorem \ref{thm:linIndepExp} holds. By the Kronecker--Weyl equidistribution theorem (see e.g. \cite[Section 4.1]{Dev18} or \cite[Appendix B]{MarNg17}), the random vector
  \[\Big(n\theta_{i,f,q}(a), \ n\theta_{j,f,q}(b) : 1\le i,j\le r\Big)_{n\le N}\]
  equidistributes in $[0,1]^{2 r}$ as $N\to\infty$. It follows at once by \eqref{eq:rootsExpSums} that $X_{a,b}$ converges in law to a pair of independent random variables distributed like \eqref{eq:explicittrg} as $N\to\infty$.

  Finally, the equivalence of the distribution of \eqref{eq:explicittrg} and traces of large enough powers of matrices in $G_r(\C)$ is the content of \cite[Theorem 2.1]{Rains97}.

  Corollary \ref{cor:fBias} is then an immediate consequence, by applying the portmanteau theorem to the random variable $\tr(g_1)-\tr(g_2)$ (or its real/imaginary parts), which is symmetric around its mean 0. \qed

  \subsection{Lower bounds: Corollary \ref{cor:lowerBounds}}

  We follow the method of Bombieri and Katz \cite[Sections 3--4]{BombKatz10}, based on the subspace theorem from \cite{Ev84,vdpS91} and the Baker--Wüstholz theorem \cite{BakWu93}.

Let $a,b\in\F_q^\times$ be such that the conclusion of Theorem \ref{thm:linIndepExp} holds. By \eqref{eq:rootsExpSums}, we have
\[F(n):=f_{q^n}(a)-f_{q^n}(b)=\sum_{i=1}^r \big(e\left(n\theta_{i,f,q}(a)\right)-e\left(n\theta_{i,f,q}(b)\right)\big).\]
The Skolem--Mahler--Lech theorem (see \cite[Theorem 2.1(i)]{BombKatz10}) shows that if none of
\[e \left(\frac{\theta_{i,f,q}(x)}{\theta_{j,f,q}(x)}\right) \ \ (x\in\{a,b\}, 1\le i<j\le r),\quad e \left(\frac{\theta_{i,f,q}(a)}{\theta_{j,f,q}(b)}\right) \ \ (1\le i,j\le r)\]
are roots of unity, which holds by linear independence, then there are only finitely many $n$ (with $a,b,r,q$ fixed) such that $F(n)=0$.

The subspace theorem \cite{Ev84,vdpS91} (see \cite[Theorem 3.1]{BombKatz10}) shows that, after multiplying by $q^{n \frac{r-1}{2}}$ (i.e. de-normalizing), for every $n\ge 1$ large enough (with respect to the roots $\theta_{i,f,q}$, i.e. with respect to $a,b,r,q,\varepsilon$), either $F(n)=0$, or $F(n)$ satisfies the lower bound of Corollary \ref{cor:lowerBounds}\ref{item:lowerBounds:subspace}. With the above, this proves the first part of the corollary.\\

For the second part, we assume that $r=2$. For any integers $k_0,k_1\in\Z$ and $\theta_0,\theta_1\in[0,1]$, we have
\begin{eqnarray*}
  &&|\cos(2n\pi\theta_0)-\cos(2n\pi\theta_1)|=2|\sin(n\pi(\theta_0+\theta_1))\sin(n\pi(\theta_0-\theta_1))|\\
  &&=2\prod_{j=0}^1|\sin(n\pi\tau_j-k_j\pi)|, \qquad \tau_j=\theta_0+(-1)^{j}\theta_1\\
  &&\ge 2\prod_{j=0}^1 \frac{2\left|n\pi\tau_j-k_j\pi\right|}{\pi}=\frac{2}{\pi^2}\prod_{j=0}^1\left|n\log(e(\tau_j))-k_j\log(-1) \right|,
\end{eqnarray*}
where the inequality holds if $k_j$ is chosen to minimize $|n\tau_j-k_j|$.

We can now apply the Baker--Wüstholz theorem \cite[Theorem, p. 20]{BakWu93} as in \cite[Section 4]{BombKatz10}, or its improvement with respect to the numerical constants by Gouillon \cite{Gou06}, giving the first and second expressions in Corollary \ref{cor:lowerBounds}\ref{item:lowerBounds:BW}. As the arguments are essentially the same, we only give the second one. If $1$, $\theta_0$, $\theta_1$ are linearly independent, then \cite[Corollary 2.2]{Gou06} shows that this is
\begin{eqnarray}\label{eq:BWineq1}
  \ge \frac{2}{\pi^2}\prod_{j=0}^1 \exp \left(-9400 \left(3.317+\frac{1.888}{d}+0.946\log{d}\right)d^4h_jA_j\right),
\end{eqnarray}
where $A_j$ is any real number satisfying $\log A_j\ge\max (1, h(e(\tau_j)), |\tau_j|/d, 1/d)$,
\begin{eqnarray*}
  h_j&=&\max \left(\log \left(\frac{n}{ed}+\frac{k_j}{dA_1}\right),\frac{1000}{d},498+\frac{284}{d}+142\log{d}\right),\\
  d&=&[\Q(e(\tau_0),e(\tau_1)):\Q]/2,
\end{eqnarray*}
for $h_0$ the absolute logarithmic Weil height. We have $h_0(e(\tau_j))\le h_0(e(\theta_0))+h_0(e(\theta_1))$.

Let us now assume that $(\theta_0,\theta_1)=(\theta_{i,f,q}(a),\theta_{i,f,q}(b))$ are moreover angles of exponential sums \eqref{eq:rootsExpSums}. Then $q^{1/2}e(\pm\theta_j)$ is an algebraic integer, so $h_0(e(\tau_j))\le \log{q}$. Regarding the degree, we have $1\le d\le (p-1)/2$ as in \cite[Proof of Corollary 4.3]{BombKatz10}, because Kloosterman/Birch sums are sums of $p$th roots of unity. Thus, we may take $A_j=\max(q,e^{2})$ and
\begin{eqnarray*}
  h_j&\le&\max \left(\log \left(\frac{n}{e}+\frac{2n+1/2}{A_j}\right), 1000, 782+142\log{\frac{p-1}{2}}\right).
\end{eqnarray*}
Then, \eqref{eq:BWineq1} is
\begin{eqnarray*}
  &\ge& \frac{2}{\pi^2}\exp \left(-1175\left(5.205+0.946\log{\frac{p-1}{2}}\right)(p-1)^4h\max(\log{q},2)\right),
\end{eqnarray*}
where $h=\max \left(\log \left(\frac{n}{e}+\frac{2n+1/2}{q}\right), 1000, 782+142\log{\frac{p-1}{2}}\right)$. If $p$ is fixed and $n$ is large enough with respect to it, this gives the expression in Corollary \ref{cor:lowerBounds}. This yields the result by Theorem \ref{thm:linIndepExp}. The argument is essentially the same to lower bound a single Kloosterman sum with Gouillon's result, with the analogue of \eqref{eq:BWineq1} having a leading factor of $2/\pi$, no product, and $A_0=\max(q/2,e)$.
\qed

\section{Angles of Gaussian primes}\label{sec:anglesGP}

\subsection{Definitions and cohomological interpretation}

\begin{definition}
  Let $q$ be an odd prime power and $k\ge 2$ be an integer. A \emph{super-even character $\Xi$ modulo $S^k$ over $\F_q$} is a character of
  \[\Sb_{k,q}^1\cong R_{k,q}/H_k, \qquad H_k:=\left(\F_q[S^2]/(S^k)\right)^\times\]
  (see \eqref{eq:Sk1}). The \emph{Swan conductor} of a non-trivial $\Xi$ is the maximal (odd) integer $d(\Xi)$ such that $\Xi$ is non-trivial on $\left(1+(S^{d(\Xi)})\right)/\big(S^k\big)\le R_{k,q}$. The character $\Xi$ is \emph{primitive} if $d(\Xi)=2\kappa-1$, with $\kappa:=\floor{k/2}$. The \emph{$L$-function} of a non-trivial $\Xi$ is
  \begin{equation}
    \label{eq:LXiT}
    L(\Xi,T)=\prod_{\substack{P\text{ prime}\\ \text{monic}\\P(0)\neq 0}}\left(1-\Xi(P)T^{\deg{P}}\right)^{-1}.
  \end{equation}
  
\end{definition}

\begin{theorem}[Katz]\label{thm:superEvenConstr}
  Let $\F_q$ be a finite field of odd characteristic $p$, $k\ge 2$ be an even integer,
  \[E=\Q \left(\zeta_{4p^r} : 1\le r\le 1+\frac{\log{k}}{\log{p}}\right)\subset\Q(\zeta_{p^\infty})\]
  with ring of integers $\Oc$, and let $\lambda\in\Lambda:=\Spec_p(\Oc)$.
  \begin{enumerate}
  \item There exists a unipotent group $\W_{k\codd}$ over $\F_p$ such that $\W_{k\codd}(\F_q)=\Sb_{k,q}^1$ (the group of super-even characters, by duality), as well as an open set $\Prim_{k\codd}\subset\W_{k\codd}$ such that $\Prim_{k\codd}(\F_q)$ is in bijection with primitive super-even characters modulo $S^k$ over $\F_q$.
  \item There exists a lisse sheaf $\Gc_{k,\lambda}$ on $\Prim_{k\codd}$ of free $\Oc_\lambda$-modules, of rank $r=2\kappa-2$, pure of weight $1$, such that for every $\Xi\in \Prim_{k\codd}(\F_q)$, we have
    \[\det\left(1-T\Frob_{q,\Xi}\mid \Gc_{k,\lambda}\right)=\frac{L(\Xi,T)}{1-T},\]
    which is a polynomial of degree $d(\Xi)=r+1$. In particular, the family $(\Gc_{k,\lambda})_{\lambda\in\Lambda}$ forms a compatible system.
  \item\label{item:superEvenTwist} The Tate twist $\Fc_{k,\lambda}=\Gc_{k,\lambda}(1/2)$ is a lisse sheaf of free $\Oc_\lambda$-modules on $\Prim_{k\codd}$, pure of weight zero, of rank $d(\Xi)-1$, with symplectic auto-duality.
  \end{enumerate}  
\end{theorem}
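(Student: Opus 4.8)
The plan is to derive the three parts from Katz's work \cite{Katz16}, reorganized as in \cite[Section 6]{RudWax17}. \textbf{For part (1)}, observe that inside the affine space whose coordinates are the coefficients of $u\bmod S^k$, the set $\Sb_{k,q}^1$ is cut out by $u(0)=1$ together with $N(u)=u(S)u(-S)\equiv 1\pmod{S^k}$; these are polynomial equations with coefficients in $\F_p$, so they define an affine group scheme $\W_{k\codd}$ over $\F_p$ with $\W_{k\codd}(\F_q)=\Sb_{k,q}^1$. The filtration of $R_{k,q}^\times$ by the subgroups $1+(S^j)$, whose successive quotients are isomorphic to $\G_a$, restricts to $\W_{k\codd}$ and exhibits it as an iterated extension of additive groups, hence unipotent; via the self-duality of $\Sb_{k,q}^1$ supplied by Katz, $\W_{k\codd}(\F_q)$ equally parametrizes the super-even characters modulo $S^k$. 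Primitivity of $\Xi$ --- non-triviality on $(1+(S^{2\kappa-1}))/(S^k)$ --- is the non-vanishing of a single ``leading coefficient'', a Zariski-open condition, which defines $\Prim_{k\codd}\subset\W_{k\codd}$.

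\textbf{For part (2)}, a non-trivial super-even $\Xi$ is a ray-class character of $\F_q(S)$ ramified only above $S=0$ (wildly, with Swan conductor $d(\Xi)$) and at most tamely above $\infty$; by geometric class field theory it corresponds to a rank-one lisse $\Oc_\lambda$-sheaf $\mathcal L_\Xi$ on a suitable open curve. Grothendieck's cohomological formula expresses $L(\Xi,T)$ through the $\Frob_q$-action on the compactly supported cohomology of $\mathcal L_\Xi$; non-triviality kills $H^0_c$, the Euler--Poincaré / Grothendieck--Ogg--Shafarevich formula pins down the dimensions in terms of the Swan conductor (whence $\deg L(\Xi,T)=d(\Xi)=r+1$), and the remaining bookkeeping --- carried out by Katz --- shows that $L(\Xi,T)/(1-T)$ is the reversed characteristic polynomial of $\Frob_q$ on a free $\Oc_\lambda$-module of rank $r=2\kappa-2$. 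Letting $\Xi$ vary, one forms the universal such sheaf over $\Prim_{k\codd}\times(\text{curve})$ and takes the first higher direct image along the projection to $\Prim_{k\codd}$; constancy of the fibre ranks on the primitive locus makes this lisse of rank $r$. Purity of weight $1$ follows from Deligne's Weil~II estimate for $H^1_c$ of a curve with weight-$0$ coefficients together with the removal of the trivial (weight $\le 0$) factor. Finally, $(\Gc_{k,\lambda})_\lambda$ is a compatible system because, by the explicit formula, $\tr(\Frob_{q,\Xi}\mid\Gc_{k,\lambda})$ --- and hence every coefficient of $L(\Xi,T)$ --- is a polynomial in Gauss and Jacobi sums attached to $\Xi$, which lie in $E$ and do not depend on $\lambda$.

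\textbf{For part (3)}, the Tate twist by $1/2$ is available over $\Oc_\lambda$ since $\sqrt p\in E$ (evaluation of the quadratic Gauss sum, as in Theorem~\ref{thm:Klconstr}), and it shifts the weight from $1$ to $0$ while leaving the rank equal to $d(\Xi)-1=r$. Auto-duality comes from the functional equation of $L(\Xi,T)$: Poincaré duality on the $H^1_c$ of the curve yields $\Gc_{k,\lambda}^\vee\cong\Gc_{k,\lambda}(1)$ up to a rank-one twist, and for a \emph{super-even} character that twist is trivial --- precisely because the involution $u(S)\mapsto u(-S)$, which is the symmetry governing the functional equation, acts trivially on $\Sb_{k,q}^1$, this being the defining ``$\codd$'' condition. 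Hence $\Fc_{k,\lambda}=\Gc_{k,\lambda}(1/2)$ is self-dual, and the pairing is \emph{symplectic} (alternating) rather than orthogonal because the Poincaré pairing on the first cohomology of a curve is alternating; this is exactly the input that produces $\USp$-monodromy in \cite{Katz16,RudWax17}.

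\textbf{Expected main obstacle.} The substantive content lies entirely in Katz's construction: producing the family $\Gc_{k,\lambda}$ and computing its rank and weight rests on the $\ell$-adic Mellin/Fourier formalism and Weil~II. I expect the most delicate point to be establishing that the auto-duality in part (3) is symplectic rather than orthogonal, since this is sign-sensitive and depends on the precise interplay between the functional equation and the super-even symmetry (and on the hypothesis that $k$ is even, which is used here). Part (1) and the compatibility statement in part (2) are, by contrast, routine once the set-up is fixed.
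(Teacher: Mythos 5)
The paper's own ``proof'' of this theorem is a bare citation to \cite[Section 2]{Katz16} and \cite[Sections 1--4]{Kat13}; your blind proposal is essentially an accurate unpacking of that citation (parameter space via the affine equations defining $\Sb_{k,q}^1$, unipotence from the $1+(S^j)$ filtration, universal sheaf via geometric class field theory and Grothendieck--Ogg--Shafarevich, weight from Weil II, symplectic auto-duality from the functional equation together with the alternating nature of the cup-product pairing on $H^1$ of a curve), so the two are the same in approach and you have correctly identified where the real mathematical content lies.

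One imprecision in your part (3) worth flagging: the involution $u(S)\mapsto u(-S)$ does \emph{not} act trivially on $\Sb_{k,q}^1$. Since $u(S)u(-S)=N(u)=1$ for $u\in\Sb_{k,q}^1$, it acts as inversion $u\mapsto u^{-1}$, hence carries a super-even character $\Xi$ to $\overline\Xi$. The actual mechanism for self-duality is that this same involution simultaneously permutes the monic primes with $P(0)\neq 0$ (after taking monic generators), which forces $L(\Xi,T)=L(\overline\Xi,T)$; combined with the usual functional equation relating $L(\Xi,T)$ to $L(\overline\Xi,1/(qT))$, this gives $\Gc_{k,\lambda}^\vee\cong\Gc_{k,\lambda}(1)$ with no residual rank-one twist. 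Your conclusion (and the reason the pairing is alternating rather than symmetric, coming from the odd cohomological degree) is correct, but ``acts trivially'' should read ``acts as inversion, hence as complex conjugation on characters''.
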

\begin{proof}
  These are the contents of \cite[Section 2]{Katz16} (see also the constructions in \cite[Sections 1-4]{Kat13}).
\end{proof}

In particular, the eigenvalues of $\Frob_{\F_q}$ acting on the stalks of $\Fc_{k,\lambda}$ at super-even primitive $\Xi$, which are free $\Oc_\lambda$-modules of rank $2\kappa-2$, yield the eigenvalues $e(\pm\theta_{\Xi,j})\in\C$ from \eqref{eq:eigenvaluesXi}, such that
\begin{eqnarray*}
  L(\Xi,T)&=&(1-T)\prod_{j=1}^{\kappa-1}\Big(1-\sqrt{q}e(\theta_{\Xi,j})T\Big)\Big(1-\sqrt{q}e(-\theta_{\Xi,j})T\Big)\\
          &=&(1-T)\det(1-\sqrt{q}T\Theta_\Xi), \quad \text{with } \Theta_\Xi\in\Sp_{d(\Xi)-1}(\C).
\end{eqnarray*}

\subsection{Existence of the limiting distribution}\label{subsec:limitingDiscExp}

We start with an explicit formula for $X_{k,N}(\bs u)$.
\begin{proposition}\label{prop:XkNun}
  For all $u\in\Sb_{k,q}^1$ and $n\le N$, we have
    \begin{eqnarray*}
      X_{k,N}(u)_n&=&-2\sum_{f=2}^\kappa\quad\sum_{j=1}^{f-1}\sum_{\substack{\Xi\in\hat\Sb_{k,q}^1\\d(\Xi)=2f-1}}\quad\overline\Xi(u)\cos(2\pi n\theta_{\Xi,j})\\
                  &&-\delta_{n \ \mathrm{even}}|\{b\in \Sb_{k,q}^1 : b^2=u\}|+O \left(\frac{q^{k/2}\tau(n)}{q^{n/6}n}+\frac{kq^k}{q^{n/4}}\right),
    \end{eqnarray*}
    with an absolute implied constant. Moreover, $|\{b\in \Sb_{k,q}^1 : b^2=u\}|\in\{0,1\}$ and in the expression above, $\overline\Xi(u)\cos(2\pi n\theta_{\Xi,j})$ may be replaced by $\Re(e(\theta_{\Xi,j})\Xi(u))$.
  \end{proposition}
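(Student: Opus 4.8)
The plan is to start from the definition of $X_{k,N}(u)_n$ and insert an explicit formula for $N_{k,n}(u)$ in terms of the $L$-functions of super-even characters, which is the standard ``explicit formula'' step in this circle of ideas. First I would use the orthogonality of characters on $\Sb_{k,q}^1$ to write the indicator $U(\pf)\in\Sec(u,k)$ as $\frac{1}{q^\kappa}\sum_{\Xi}\overline\Xi(u)\Xi(\pf)$, so that $N_{k,n}(u)=\frac{1}{q^\kappa}\sum_{\Xi}\overline\Xi(u)\big(\sum_{\deg\pf=n}\Xi(\pf)\big)$. The trivial character contributes the main term $\frac{q^n/n}{q^\kappa}$ (up to the usual $O(q^{n/2})$-type error coming from prime powers in $\Lambda$ versus primes, which I will need to track carefully to get the $\tau(n)q^{n/6}/n$ shape). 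For the non-trivial characters one takes the logarithmic derivative of \eqref{eq:LXiT}, which expresses $\sum_{\deg\pf=n}\Xi(\pf)$ (again up to prime-power corrections) as $-\frac{1}{n}\sum_{\alpha}\alpha^n$, where $\alpha$ runs over the inverse roots of $L(\Xi,T)$; by Theorem \ref{thm:superEvenConstr} these are $1$ together with the $\sqrt q\,e(\pm\theta_{\Xi,j})$ for $1\le j\le d'(\Xi)=f-1$ where $d(\Xi)=2f-1$. The contribution of the inverse root $1$ produces the term $-\delta_{n\text{ even}}|\{b:b^2=u\}|$: indeed $\sum_{\Xi}\overline\Xi(u)\cdot 1$ (summed suitably over the non-trivial characters, with the quadratic characters being exactly those of conductor reflecting $b^2=u$) collapses, via orthogonality over the quotient by squares, to counting square roots of $u$ in $\Sb_{k,q}^1$, and this only appears for even $n$ because the inverse root is $1$ and enters with a sign. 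The remaining $\sqrt q\,e(\pm\theta_{\Xi,j})$ contribute $-\frac{1}{n}\sum_{j}\big((\sqrt q)^n e(n\theta_{\Xi,j})+(\sqrt q)^n e(-n\theta_{\Xi,j})\big)=-\frac{2(\sqrt q)^n}{n}\sum_j \cos(2\pi n\theta_{\Xi,j})$.

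Next I would collect the normalization. Multiplying by $\frac{q^\kappa n}{q^{n/2}}$ as in the definition of $X_{k,N}(u)_n$, the main term cancels, the $\sqrt q\,e(\pm\theta_{\Xi,j})$ contributions become exactly $-2\sum_{\Xi}\overline\Xi(u)\cos(2\pi n\theta_{\Xi,j})$ summed over non-trivial $\Xi$, which I reorganize as $-2\sum_{f=2}^{\kappa}\sum_{j=1}^{f-1}\sum_{d(\Xi)=2f-1}\overline\Xi(u)\cos(2\pi n\theta_{\Xi,j})$ since super-even characters have odd Swan conductor between $3$ and $2\kappa-1$ (the conductor-$1$, i.e. ``no zeros besides $1$'', and trivial cases being absorbed into the square-root term and the main term respectively). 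The $1$-inverse-root piece, once renormalized, gives precisely $-\delta_{n\text{ even}}|\{b\in\Sb_{k,q}^1:b^2=u\}|$, and $|\{b:b^2=u\}|\in\{0,1\}$ because $\Sb_{k,q}^1$ is (the $\F_q$-points of) a connected unipotent group over a field of odd characteristic, hence a $p$-group, so squaring is injective. For the error term, the prime-power corrections $\sum_{\pf^m,\,m\ge 2,\,m\deg\pf=n}$ have size $O(q^{n/2})$ crudely, but more precisely $O(\tau(n)q^{n/2})$-type once one sums over the relevant divisors; after multiplying by the normalization $q^\kappa n/q^{n/2}=q^{k/2}n/q^{n/2}+O(\dots)$ one gets the stated $O\big(q^{k/2}\tau(n)/(q^{n/6}n)\big)$ shape — the exponent $1/6$ rather than $1/2$ reflects being cautious about the worst prime-power term (degree $n/2$ or $n/3$) and about bounding $|\Xi(\pf^m)|\le 1$ over all $q^\kappa$ characters, which is where the extra $q^k/q^{n/4}$ term also enters from the conductor-bounded sum over $\Xi$. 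Finally, the remark that $\overline\Xi(u)\cos(2\pi n\theta_{\Xi,j})$ may be replaced by $\Re(e(\theta_{\Xi,j})\Xi(u))$ for $n=1$ — wait, more precisely by $\Re\big(e(n\theta_{\Xi,j})\Xi(u)\big)$ after pairing $\Xi$ with $\overline\Xi$ (both of the same conductor $2f-1$, and $\theta_{\overline\Xi,j}=-\theta_{\Xi,j}$ since $L(\overline\Xi,T)=\overline{L(\Xi,T)}$ as the coefficients are algebraic with conjugate-symmetric inverse roots): grouping the term for $\Xi$ with the term for $\overline\Xi$ turns $\overline\Xi(u)\cos(2\pi n\theta_{\Xi,j})+\Xi(u)\cos(2\pi n\theta_{\Xi,j})$... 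I would instead note $\tfrac12(\overline\Xi(u)e(n\theta_{\Xi,j})+\Xi(u)e(-n\theta_{\Xi,j}))=\Re(\Xi(u)e(-n\theta_{\Xi,j}))$, and relabel.

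The main obstacle I anticipate is the bookkeeping of the error term: getting from the crude Riemann-hypothesis bound $|\sum_{\deg\pf=n}\Xi(\pf)|\ll \deg(L(\Xi,T))\cdot q^{n/2}$ to something uniform over all $q^\kappa$ characters simultaneously, while separating cleanly the genuine ``zero'' contributions (the $\sqrt q\,e(\pm\theta_{\Xi,j})$ and the inverse root $1$) from the prime-power arithmetic corrections, and making the two error terms $q^{k/2}\tau(n)/(q^{n/6}n)$ and $kq^k/q^{n/4}$ come out with absolute constants. The identification of the ``pole-at-$1$'' contribution with $|\{b:b^2=u\}|$ also requires care: one must check that the characters of $\Sb_{k,q}^1$ whose $L$-function retains the inverse root $1$ (equivalently, which are trivial on squares but not everywhere trivial) are exactly detected by $\sum_\Xi \overline\Xi(u)$ over the appropriate subgroup, and that this sum equals $|\{b:b^2=u\}|$ up to the trivial-character main term already removed — a finite abelian group duality computation, but one worth doing explicitly. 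Everything else (orthogonality, logarithmic derivative of $L$, the explicit form of the inverse roots) is routine given Theorem \ref{thm:superEvenConstr} and the equidistribution input of Rudnick--Waxman recalled above.
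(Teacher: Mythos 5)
Your general plan --- orthogonality on $\Sb_{k,q}^1$ to detect the sector, logarithmic derivative of $L(\Xi,T)$ to turn prime counts into sums over inverse roots, then renormalize --- is the same strategy the paper runs (packaged via \cite[Lemma 6.4, Section 6.6]{RudWax17}). The rewriting of $\overline\Xi(u)\cos(2\pi n\theta_{\Xi,j})$ via $\Xi\leftrightarrow\overline\Xi$, and the observation that $|\{b\in\Sb_{k,q}^1:b^2=u\}|\in\{0,1\}$ because $|\Sb_{k,q}^1|=q^\kappa$ is odd, are both correct and match the paper.

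However, there is a genuine gap in the step that produces the secondary main term $-\delta_{n\ \mathrm{even}}|\{b\in\Sb_{k,q}^1:b^2=u\}|$. You attribute it to the trivial inverse root $1$ of $L(\Xi,T)$. That cannot be right: the eigenvalue $1$ contributes $1^n=1$ identically in $n$, with no parity dependence and no sign, so no $\delta_{n\ \mathrm{even}}$ can emerge from it. What that eigenvalue actually produces, after summing $\sum_{\Xi\neq 1}\overline\Xi(u)$ and normalizing by $q^\kappa n/q^{n/2}$, is an $O(q^\kappa/q^{n/2})$ quantity --- in the paper this is the harmless $-\delta_{u=1}q^\kappa/q^{n/2}$ piece, absorbed into the error. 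Your proposed mechanism ("orthogonality over the quotient by squares" detected by the characters "of conductor reflecting $b^2=u$") is not a real condition: every non-trivial primitive super-even $\Xi$ has $L(\Xi,T)=(1-T)\det(1-\sqrt{q}T\Theta_\Xi)$, so the root $1$ does not single out any subset of characters.

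The true source of the $\delta_{n\ \mathrm{even}}$ term is arithmetic, not spectral: it is the von Mangoldt--versus--prime correction $R_{k,n}(u)=\sum_{f\ \text{not prime},\ \deg f=n}\Lambda(f)\delta_{U(f)\in\Sec(u,k)}$. By the prime polynomial theorem, the $m=2$ contribution (squares of primes $P^2$ with $\deg P=n/2$) dominates and only exists for $n$ even, while $m\ge 3$ gives the $O(q^{n/3}\tau(n)/n)$ error; after normalization this is the first error term. One then expands $\delta_{U(P^2)\in\Sec(u,k)}$ over residue classes $a\in R_{k,q}$ with $a^2\equiv u\pmod{H_k}$, applies the function field Dirichlet theorem to count primes $P\equiv a\pmod{S^k}$ of degree $n/2$, and the cardinality $|\{a:a^2\equiv u\pmod{H_k}\}|=|\{b\in\Sb_{k,q}^1:b^2=u\}|$ drops out, with the Dirichlet error producing the $O(kq^k/q^{n/4})$ term. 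Without relocating the square-root term from the eigenvalue $1$ to this prime-power correction and adding the Dirichlet step, your derivation would not close.
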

  \begin{remark}
    Almost all (i.e. a density $1+O(1/q)$) super even $\Xi\in\hat\Sb^1_{k,q}$ have conductor $2\kappa-1$, but since we look at the $N\to\infty$ limit, we cannot restrict the sum in Proposition \ref{prop:XkNun} to those characters only as in \cite[Proof of Theorem 6.7]{RudWax17} (with a $q\to\infty$ limit).
  \end{remark}
  \begin{proof}
    By \cite[Lemma 6.4, Section 6.6]{RudWax17}, we have
    \begin{eqnarray*}
      X_{k,N}(u)_n&=&-\sum_{\Xi\neq 1} \overline\Xi(u)\tr\Theta_{\Xi}^n-\frac{R_{k,n}(u)q^\kappa}{q^{n/2}}-\frac{\delta_{u=1}q^\kappa}{q^{n/2}},
    \end{eqnarray*}
    where, by the prime polynomial theorem \cite[Theorem 2.2]{Ros02},
    \begin{eqnarray*}
      R_{k,n}(u)&:=&\sum_{\substack{f\in\F_q[S] \text{ monic}\\ \text{not prime}\\ \deg(f)=n}} \Lambda(f)\delta_{U(f)\in\Sec(u,k)}\\
                &=&\delta_{n\text{ even}}\frac{n}{2}\sum_{\substack{P\text{ monic}\\ \text{prime} \\ \deg(P)=n/2}} \delta_{U(P^{2})\in\Sec(u,k)}+ O \left(\frac{q^{n/3}\tau(n)}{n}\right).
    \end{eqnarray*}

    By the function field analogue of Dirichlet's theorem on primes in arithmetic progressions \cite[Theorem 4.8]{Ros02}, if $n$ is even,
    \begin{eqnarray*}
  &&-\frac{q^{\kappa}n}{2q^{n/2}}\sum_{\substack{P\text{ monic}\\ \text{prime}\\ \deg(P)=n/2}}\delta_{U(P^2)\in\Sec(u,k)}\\
  &=&-\frac{q^{\kappa}n}{2q^{n/2}}\sum_{\substack{a\in R_{k,q} \\ a^2\equiv u\pmod*{H_k}}} \sum_{\substack{P\text{ monic}\\\text{prime}\\ \deg(P)=n/2}} \delta_{P\equiv a\pmod*{S^k}}\\
    \end{eqnarray*}
    This is furthermore
    \begin{eqnarray*}
  &=&-\frac{q^{\kappa}n}{2q^{n/2}}\sum_{\substack{a\in R_{k,q} \\ a^2\equiv u\pmod*{H_k}}} \left(\frac{1}{|R_{k,q}|}\frac{q^{n/2}}{n/2}+O \left(\frac{q^{n/4}k}{n}\right)\right)\\
  &=&-|\{a\in R_{k,q} : a^2\equiv u\pmod*{H_k}\}|\left(\frac{1}{|H_k|}+O \left(\frac{q^{\kappa}k}{q^{n/4}}\right)\right)\\
  &=&-|\{b\in \Sb_{k,q}^1 : b^2=u\}|\left(1+O \left(\frac{k|R_{k,q}|}{q^{n/4}}\right)\right).
    \end{eqnarray*}
    Note that in odd characteristic, the cardinality $|\Sb_{k,q}^1|=q^\kappa$ is odd, so the function $(x\in\Sb_{k,q}^1)\mapsto x^2$ is injective, and $|\{b\in \Sb_{k,q}^1 : b^2=u\}|\in\{0,1\}$.
    
    Hence,
        \begin{eqnarray*}
          X_{k,N}(u)_n&=&-\sum_{\Xi\neq 1} \overline\Xi(u)\tr\Theta_{\Xi}^n-\frac{\delta_{u=1}q^\kappa}{q^{n/2}}+ O \left(\frac{q^\kappa\tau(n)}{q^{n/6}n}\right)\\
                      &&-\delta_{n\text{ even}}|\{b\in \Sb_{k,q}^1 : b^2=u\}|\left(1+O \left(\frac{kq^k}{q^{n/4}}\right)\right),
        \end{eqnarray*}
        which gives the result after splitting the sum over characters $\Xi$ depending on the conductors $d(\Xi)$, which are odd integers. The last assertion follows from the invariance of the sum under $\Xi\mapsto\overline\Xi$.
  \end{proof}

\begin{proof}[Proof of Theorem \ref{thm:limitDistrExp}] The existence of the limiting distribution goes almost exactly as in \cite[Lemma 3.1, Theorem 3.2]{Cha08} (based on \cite{RubSar94}). Let $\tilde X_{k,N}(\bs u)$ be the random variable on $[1,N]$ defined by the right-hand side of the expression in Proposition \ref{prop:XkNun}, but without the error term. Let moreover
  \begin{equation}
    \label{eq:VXij}
    V:=\{(\Xi,j) : \Xi\in\hat\Sb^1_{k,q}, \ \Xi\neq 1, \ 1\le j\le d'(\Xi)\}.
  \end{equation}
    There exists an explicit continuous function $g_{k,\bs u}:(\R/\Z)^V\to\R^R$ such that
    \[\tilde X_{k,N}(\bs u)=\Big(g_{k,\bs u}\left(n\theta_{\Xi,j} : (\Xi,j)\in V\right)\Big)_{n\le N.}\]
    Note that $g_{k,\bs u}$ is bounded is (when $k,q$ are fixed): each component is bounded by $2\kappa q^\kappa$.

    By the Kronecker--Weyl equidistribution theorem, $\left(n\theta_{\Xi,j} : (\Xi,j)\in V\right)_{n\le N}$ converges in law (as $N\to\infty$) to a random vector equidistributed in the closure $\overline\Gamma$ of the torus
    \begin{equation}
      \label{eq:Gamma}
      \Gamma=\left\{n\Big(\theta_{\Xi,j}\Big)_{(\Xi,j)\in V} : n\in\Z\right\}\subset (\R/\Z)^{V}.
    \end{equation}
    It then follows from Helly's selection theorem \cite[Theorems 25.9-10]{Bill86} that $X_{k,N}(\bs u)$ converges in law to a random vector $X_k(\bs u)$ which corresponds to a measure $\mu_{k,\bs u}$ satisfying
    \begin{equation}
      \label{eq:intRRGamma}
      \int_{\R^R} f(\bs x)d\mu_{k,\bs u}(\bs x)=\int_{\overline\Gamma}(f\circ g_{k,\bs u})(\bs x)d\bs x
    \end{equation}
    for every bounded continuous $f:\R^R\to\R$. The limiting measure $\mu_{k, \bs u}$ is compactly supported from the boundedness of $g_{k,\bs u}$ ($k, q$ fixed).

    In particular, there is convergence of the moments, which allows to compute the expected value by noting that
    \begin{eqnarray*}
      \left|\frac{1}{N}\sum_{f=2}^\kappa\quad\sum_{j=1}^{f-1}\sum_{\substack{\Xi\in\hat\Sb_{k,q}^1\\d(\Xi)=2f-1}}\overline\Xi(u)\sum_{n=1}^N\cos(2\pi n\theta_{\Xi,j})\right|&\ll&\frac{\kappa q^\kappa}{N}\xrightarrow{N\to\infty}0.
    \end{eqnarray*}
  \end{proof}

\subsection{Properties of the limiting distribution under (generic) linear independence}\label{subsec:propLimDistrGSHExp}

For the next properties, we continue to use the methods of Rubinstein--Sarnak \cite{RubSar94} and others, in particular by studying characteristic functions.

    \begin{lemma}[Fourier transform]\label{lemma:FTGP}
      For $u_1,\dots,u_R\in\Sb_{k,q}^1$ distinct, let $\mu_{k,\bs u}$ be the measure associated with the $R$-dimensional random vector $X_{k}(\bs u)$. Its Fourier transform
      \[\hat\mu_{k,\bs u}(\bs t):=\int_{\R^R} e^{-i\bs t\cdot \bs x}d\mu_{k,\bs u}(\bs x) \qquad (\bs t\in\R^R)\]
      is given by
      \[\exp \left(i\bs t\cdot \bs b_k(\bs u)\right)\int_{\overline\Gamma} \prod_{f=1}^{\kappa}\prod_{j=1}^{f-1}\prod_{\substack{\Xi\in\hat\Sb_{k,q}^1\\d(\Xi)=2f-1}} \exp \Big(2i \Re\left(e(x_j)\bs t\cdot\Xi(\bs u)\right)\Big) d\bs x,\]
    where $\Gamma$ is the torus \eqref{eq:Gamma} and $\bs b_k(\bs u):=(|\{b\in \Sb_{k,q}^1 : b^2=u_r\}|/2)_{1\le r\le R}$, $\Xi(\bs u):=(\Xi(u_r))_{1\le r\le R}$.
    If Hypothesis \ref{hyp:LI} holds, then
    \begin{equation}
      \label{eq:muhatGPLI}
      \hat\mu_{k,\bs u}(\bs t)=\exp \left(i\bs t\cdot \bs b_k(\bs u)\right)\prod_{f=2}^\kappa\prod_{j=1}^{f-1}\prod_{\substack{\Xi\in\hat\Sb_{k,q}^1\\d(\Xi)=2f-1}} J_0 \left(2|\bs t\cdot \Xi(\bs u)|\right),      
    \end{equation}
    where $J_0(z)=\frac{1}{\pi}\int_0^\pi \cos(z\sin{t})dt$ is the $0$th Bessel function of the first kind.
\end{lemma}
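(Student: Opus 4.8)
The plan is to compute the Fourier transform directly from the integral representation \eqref{eq:intRRGamma} of the limiting measure, applied to the (bounded, continuous) function $f(\bs x)=e^{-i\bs t\cdot\bs x}$. By Proposition \ref{prop:XkNun}, the function $g_{k,\bs u}$ is an explicit finite sum: writing $\bs x=(x_{\Xi,j})_{(\Xi,j)\in V}$ with $V$ as in \eqref{eq:VXij}, the $r$th component of $g_{k,\bs u}(\bs x)$ is
\[-|\{b\in\Sb^1_{k,q}:b^2=u_r\}|/2\;-\;\sum_{f=2}^\kappa\sum_{j=1}^{f-1}\sum_{\substack{\Xi\in\hat\Sb^1_{k,q}\\ d(\Xi)=2f-1}} 2\Re\bigl(e(x_{\Xi,j})\,\Xi(u_r)\bigr),\]
where I have used the last sentence of Proposition \ref{prop:XkNun} to replace $\overline\Xi(u)\cos(2\pi n\theta_{\Xi,j})$ by $\Re(e(\theta_{\Xi,j})\Xi(u))$, and absorbed the $\delta_{n\text{ even}}$ parity term into its average value, which is precisely $\bs b_k(\bs u)$ (this averaging is legitimate because we integrate over $\overline\Gamma$, which contains the diagonal line and hence sees both parities symmetrically; alternatively, as in Proposition~\ref{prop:XkNun} and \cite[Lemma 6.4]{RudWax17}, this constant term is already the correct mean contribution). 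Then $e^{-i\bs t\cdot g_{k,\bs u}(\bs x)}$ factors as $\exp(i\bs t\cdot\bs b_k(\bs u))$ times a product over $(\Xi,j)\in V$ of the exponentials $\exp\bigl(2i\Re(e(x_{\Xi,j})\,\bs t\cdot\Xi(\bs u))\bigr)$, which is exactly the integrand in the first displayed formula. So the first assertion is essentially bookkeeping: regroup the triple sum over $f,j,\Xi$, note $\Re(z_1)+\Re(z_2)=\Re(z_1+z_2)$ to collect the $R$ components into $\bs t\cdot\Xi(\bs u)$, and read off the result.

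For the second assertion, assume Hypothesis \ref{hyp:LI}. Then the angles $\theta_{\Xi,j}$ (together with $1$) are $\Q$-linearly independent, so the closure $\overline\Gamma$ of the torus $\Gamma$ from \eqref{eq:Gamma} is the \emph{full} torus $(\R/\Z)^V$, and the equidistribution measure $d\bs x$ is the product Haar (Lebesgue) measure. The integral over $(\R/\Z)^V$ of a product indexed by $(\Xi,j)\in V$ of functions each depending on a single coordinate $x_{\Xi,j}$ therefore factors as a product of one-dimensional integrals
\[\frac{1}{2\pi}\int_0^{2\pi}\exp\bigl(2i\Re(e^{i\phi}\,\bs t\cdot\Xi(\bs u))\bigr)\,d\phi.\]
Writing $\bs t\cdot\Xi(\bs u)=\rho e^{i\alpha}$ with $\rho=|\bs t\cdot\Xi(\bs u)|\ge 0$, the exponent is $2\rho\cos(\phi+\alpha)$, and a change of variable absorbs $\alpha$; the integral becomes $\frac{1}{2\pi}\int_0^{2\pi}e^{2i\rho\cos\psi}\,d\psi = \frac{1}{\pi}\int_0^\pi\cos(2\rho\cos\psi)\,d\psi = J_0(2\rho)$, by the standard integral representation of the Bessel function $J_0$ (the imaginary part vanishes by oddness of $\sin(2\rho\cos\psi)$ about $\psi=\pi/2$, and $\cos(2\rho\cos\psi)$ over $[0,\pi]$ reparametrizes to the stated $\frac1\pi\int_0^\pi\cos(z\sin t)\,dt$ form with $z=2\rho$). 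Multiplying over all $(\Xi,j)\in V$ — equivalently over $f=2,\dots,\kappa$, $1\le j\le f-1$, and $\Xi$ with $d(\Xi)=2f-1$ — and reinstating the prefactor $\exp(i\bs t\cdot\bs b_k(\bs u))$ gives \eqref{eq:muhatGPLI}.

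The only genuine subtlety — and the step I would be most careful about — is the treatment of the parity term $\delta_{n\text{ even}}|\{b:b^2=u_r\}|$ from Proposition \ref{prop:XkNun}: it is not of the form $g_{k,\bs u}(n\theta_{\Xi,j})$ for a continuous $g$, since it depends on $n\bmod 2$ rather than on $(n\theta_{\Xi,j})_{(\Xi,j)\in V}$. The clean way around this is the one implicit in the proof of Theorem \ref{thm:limitDistrExp}: one enlarges the index set to include an auxiliary coordinate tracking $n\bmod 2$ (equivalently $n\cdot\tfrac12\in\R/\Z$), so that $\tilde X_{k,N}(\bs u)$ genuinely becomes a continuous function evaluated along the orbit $n\mapsto(n\theta_{\Xi,j},\,n/2)$; the closure of that orbit is $\overline\Gamma\times(\R/\Z)$ (the extra coordinate is rationally independent of $1$... no — it is rational, so one gets $\overline\Gamma\times\{0,\tfrac12\}$ with the uniform measure on the two points). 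Integrating out that discrete coordinate replaces $\delta_{n\text{ even}}$ by its mean $\tfrac12$, producing exactly $\bs b_k(\bs u)$. Since this coordinate is independent of the $x_{\Xi,j}$ under the equidistribution measure, the factorization above is unaffected, and one obtains the stated formulas. (In practice one may simply cite that this averaging was already carried out in establishing Proposition \ref{prop:XkNun} and Theorem \ref{thm:limitDistrExp}, as in \cite{RudWax17,Cha08}.) Everything else is routine manipulation of trigonometric integrals.
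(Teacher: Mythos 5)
Your approach is the same as the paper's one-line proof: read off $g_{k,\bs u}$ from Proposition \ref{prop:XkNun}, insert it into \eqref{eq:intRRGamma}, and under Hypothesis \ref{hyp:LI} use that $\overline\Gamma$ is the full torus to factor the integral into one-dimensional pieces, each equal to $J_0$ (the paper cites \cite[Lemma C.1]{MarNg17} for that step; your direct computation of the Bessel integral is correct). The one place to be careful is exactly the one you flag: the $\delta_{n\text{ even}}$ term. Adjoining a parity coordinate to the torus is the right move, and you correctly get $\overline\Gamma\times\{0,\tfrac12\}$ with the uniform two-point measure on the extra factor. But replacing $\delta_{n\text{ even}}$ by its mean $\tfrac12$ \emph{inside the exponential} is valid for the first moment only, not for the characteristic function: integrating $e^{\,i\delta\,\bs t\cdot\bs c(\bs u)}$ over $\delta\in\{0,1\}$ uniformly (with $\bs c(\bs u)_r=|\{b\in\Sb^1_{k,q}:b^2=u_r\}|$, so $\bs b_k(\bs u)=\bs c(\bs u)/2$) gives $\tfrac12\bigl(1+e^{i\bs t\cdot\bs c(\bs u)}\bigr)=e^{i\bs t\cdot\bs b_k(\bs u)}\cos\bigl(\bs t\cdot\bs b_k(\bs u)\bigr)$, so there is an extra real factor $\cos(\bs t\cdot\bs b_k(\bs u))$ that both your derivation and the lemma as stated omit. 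This imprecision is inherited from the paper (whose own proof never addresses the parity term), and it is harmless for the downstream applications — the proof of Theorem \ref{thm:distrPropGPLI} uses only the bound $|\hat\mu_{k,\bs u}|\le\prod J_0$ and the symmetry of $e^{-i\bs t\cdot\E(X_k(\bs u))}\hat\mu_{k,\bs u}$, both preserved by an extra real even factor of modulus $\le 1$ — but the claim that integrating out the parity coordinate produces \emph{exactly} $\exp(i\bs t\cdot\bs b_k(\bs u))$ should not stand as written.
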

\begin{proof}
  The first statement is a direct consequence of Proposition \ref{prop:XkNun} and \eqref{eq:intRRGamma}. For \eqref{eq:muhatGPLI}, under Hypothesis \ref{hyp:LI} the torus $\overline\Gamma$ is maximal and the integral splits as a product of integrals of the form
  \[\int_{\R/\Z}\exp\Big(2i\Re \left(e(x_j)\bs t\cdot \Xi(\bs u)\right)\Big)dx_j=J_0 \left(2|\bs t\cdot \Xi(\bs u)|\right)\]
  by \cite[Lemma C.1]{MarNg17}.
\end{proof}
We now prove Theorem \ref{thm:distrPropGPLI} about properties of the limiting distribution under Hypothesis \ref{hyp:LI}.
\begin{proof}[Proof of Theorem \ref{thm:distrPropGPLI}]
  To show that $X_k(\bs u)$ is absolutely continuous, it is enough to show that $\int_{\R^R}|\hat\mu_{k,\bs u}(\bs t)|d\bs t<\infty$ (see \cite[Lemma A.8(b)]{MarNg17}). To do so, we partly follow the method of \cite[Section 4]{MarNg17}.  Since we assume Hypothesis \ref{hyp:LI}, we may use \eqref{eq:muhatGPLI} from Lemma \ref{lemma:FTGP}: we have
    \begin{eqnarray*}
      |\hat\mu_{k,\bs u}(\bs t)|&\le&\prod_{f=2}^\kappa\prod_{j=1}^{f-1}\prod_{\substack{\Xi\in\hat\Sb_{k,q}^1\\d(\Xi)=2f-1}} |\bs t\cdot \Xi(\bs u)|^{-1/2}\le\left[\prod_{\Xi\in S} |\bs t\cdot \Xi(\bs u)|^{2}\right]^{-\frac{\kappa-1}{4}}\\
      &\le&\left[\frac{1}{|S_1(\bs t)|}\sum_{\Xi\in S_1(\bs t)} |\bs t\cdot \Xi(\bs u)|^{2}\right]^{-\frac{\kappa-1}{4}}
    \end{eqnarray*}
    where
    \[S_1(\bs t):=\{\Xi\in\hat\Sb_{2\kappa,q}^1\text{ primitive }: |\bs t\cdot \Xi(\bs u)|>1\}\subset S:=\{\Xi\in\hat\Sb_{2\kappa,q}^1\text{ primitive}\},\]
    since $|J_0(z)|\le \min(1,\sqrt{2/(\pi|z|)})$ for all $z\in\R$ (see \cite[Lemma C.2]{MarNg17}). If $\bs t\in \bs T:=\{\bs t\in \R^R : |S_1(\bs t)|\ge 1\}$, we get
    \begin{eqnarray*}
      \frac{1}{|S_1(\bs t)|}\sum_{\Xi\in S_1(\bs t)} |t\cdot \Xi(\bs u)|^{2}&\ge&\frac{1}{|S|}\sum_{\Xi\in S}|t\cdot \Xi(\bs u)|^{2}\\
                                                              &=&\sum_{r,r'=1}^R t_r\overline{t_{r'}}\frac{1}{|S|}\sum_{\Xi\in S}\Xi(u_r)\overline\Xi(u_{r'}).
    \end{eqnarray*}
    By the orthogonality relations and Möbius inversion,
    \begin{eqnarray*}
      \frac{1}{|S|}\sum_{\Xi\in S}\Xi(u_r)\overline\Xi(u_{r'})&=&\frac{1}{|S|}\sum_{f=2}^\kappa\mu(S^{2(\kappa-f)})\sum_{\Xi\in\hat\Sb_{2f,q}^1}\Xi(u_r)\overline\Xi(u_{r'})\\
                                                              &=&\frac{q^{\kappa}\delta_{u_r=u_{r'}}}{|S|}=\frac{\delta_{u_r=u_{r'}}}{1-1/q}.
    \end{eqnarray*}
    Since the $u_i$ are distinct, it follows that
    \[\frac{1}{|S_1(\bs t)|}\sum_{\Xi\in S_1(\bs t)} |\bs t\cdot \Xi(\bs u)|^{2}\ge||\bs t||^2\qquad\text{if }|S_1(\bs t)|\ge 1.\]
    Therefore, if $\bs t\in\bs T$, then $|\hat\mu_{k,\bs u}(\bs t)|\le ||\bs t||^{-\frac{\kappa-1}{2}}$.
    On the other hand, if $\bs t\not\in \bs T$, the same argument shows that
    \[1\ge \frac{1}{|S|}\sum_{\Xi\in S}|\bs t\cdot \Xi(\bs u)|\ge ||\bs t||^2,\]
    i.e. $\R^R\backslash \bs T$ is bounded. It also contains a neighborhood of $\bs 0$ since it contains the finite intersection $\bigcap_{\Xi\in S} \{\bs t\in\R^R: |\bs t\cdot \Xi(\bs u)|<1\}$ of open sets containing $\bs 0$.

    Thus, there exists $\varepsilon>0$ such that
    \begin{eqnarray*}
      \int_{\R^R}|\hat\mu_{k, \bs u}(\bs t)|d\bs t&\ll&\int_{||\bs t||\le 1} |\hat\mu_{k,\bs u}(\bs t)| d\bs t+\int_{\R^R\backslash B_\varepsilon(\bs 0)}||\bs t||^{-\frac{\kappa-1}{2}}d\bs t,
    \end{eqnarray*}
    and the second integral converges when $\kappa-1>2R$ (see \cite[p. 22]{MarNg17}). This concludes the proof of \ref{item:thm:distrPropGPLIABsCont}.
    
    Concerning \ref{item:thm:distrPropGPLISymmetry}, the symmetry/exchangeability follow from the expression \eqref{eq:muhatGPLI} for $\hat\mu_{k,\bs u}$.

        The last statements of the theorem follow from the previous ones: since $\mu_{k,\bs u}$ is absolutely continuous, $A=\{\bs x\in\R^R:x_1<\dots<x_R\}$ is a continuity set, so that by the portmanteau theorem,
    \[\lim_{N\to\infty}\P\Big(X_{k,N}(u_1)<\dots<X_{k,N}(u_R)\Big)=\mu_{k,\bs u}(A).\]
  \end{proof}

  Finally, we prove Corollary \ref{cor:distrPropGPweak} (unconditional properties of the limiting distribution) assuming Theorem \ref{thm:genericLIGP} on generic linear independence.
\begin{proof}[Proof of Corollary \ref{cor:distrPropGPweak}] ~
  
  \begin{enumerate}
  \item It suffices to show it when $R=1$, i.e. that the random variable $X_k(u)$ is continuous for every $u\in \Sb_{k,q}^1$. We follow the argument in \cite[Proof of Theorem 2.2]{Dev18} (see also \cite[Proposition 2.1]{DevMeng18}). By Wiener's lemma, it suffices to show that
  \begin{equation}
    \label{eq:Wiener}
    \lim_{S\to\infty} \frac{1}{S}\int_{-S}^S |\hat\mu_{k,u}(t)|^2dt=0.
  \end{equation}
  By Lemma \ref{lemma:FTGP}, $|\hat\mu_{k,u}(t)|\le\left|\int_{\overline\Gamma} \exp \left(it \phi(\bs x)\right)d\bs x\right|$, where
  \begin{eqnarray*}
    \phi(\bs x)&:=&2\sum_{f=1}^{\kappa}\sum_{j=1}^{f-1}\sum_{\substack{\Xi\in\hat\Sb_{k,q}^1\\d(\Xi)=2f-1}}\cos(2\pi x_j) \Xi(u).
  \end{eqnarray*}
  By Theorem \ref{thm:genericLIGP}, there exists $\Xi\in\hat\Sb_{k.q}^{1}$ and $1\le j\le d'(\Xi)$ such that $\theta_{\Xi,j}\not\in\Q$. It follows that the function $\phi: \overline\Gamma\to\R$ is analytic and non-constant, since $\Xi(u)\neq 0$ (being a root of unity). Thus, the scaling principle \cite[VIII.2, Proposition 5]{Ste93} shows that $|\hat\mu_{k,u}(t)|\ll |t|^{-\alpha}$ for some constant $\alpha>0$, where $\alpha$ and the implied constant can depend on all parameters but $t$. Thus, \eqref{eq:Wiener} holds, using the trivial bound $|\hat\mu_{k,u}(t)|\le 1$ around $0$.
\item This is a consequence of the proof of Theorem \ref{thm:limitDistrExp}: $\overline\Gamma$ is a subtorus of $(\R/\Z)^{V}$, with $V$ as in \eqref{eq:VXij}), and if the set of the $\theta_{\Xi,j}$ ($(\Xi,j)\in V$) contains at least $t$ linearly independent elements, then $\dim\overline\Gamma\ge t$. By Theorem \ref{thm:genericLIGP}, the latter holds whenever $t=o(\log|\Sb_{k,q}^1|)$.
  \end{enumerate}
\end{proof}

\section{Prime polynomials in short intervals}\label{sec:variancePPSI}
\subsection{Definitions and cohomological interpretation}
\begin{definition}
  Let $Q\in\F_q[T]$ be non-constant.
  \begin{itemize}
  \item A Dirichlet character $\chi$ modulo $Q$ is a character of $(\F_q[T]/(Q))^\times$.
  \item The character $\chi$ is \textit{even} if it is trivial on $\F_q^\times$.
  \item It is \textit{primitive} if it is not induced from a character modulo a proper divisor $Q'\mid Q$ through the natural map $(\F_q[T]/(Q))^\times\to(\F_q[T]/(Q'))^\times$. The \textit{conductor} of $\chi$ is the monic divisor $Q'\mid Q$ of smallest degree such that $\chi$ is primitive modulo $Q'$.
  \item As usual, we may extend $\chi$ as $\chi:\F_q[T]\to\C$ by defining $\chi(f)=\chi(f\pmod*{Q})$ if $(f,Q)=1$, $\chi(f)=0$ otherwise.
\item The number of Dirichlet characters modulo $Q$ is denoted by $\varphi(Q)$. The number of even (resp. primitive, even primitive) such characters is $\varphi^\ev(Q)=\varphi(Q)/(q-1)$ (resp. $\varphi_\prim(Q)$, $\varphi_\prim^\ev(Q)$).
\item The \emph{$L$-function} of $\chi$ is
  \[L(\chi,T)=\prod_{\substack{P\text{ prime}\\ \text{monic}\\P\nmid Q}}\left(1-\chi(P)T^{\deg{P}}\right)^{-1}.\]
  \end{itemize}    
\end{definition}

We recall that if $\deg(Q)\ge 2$ and $\chi\neq 1$, then $L(\chi,T)$ is a polynomial (rather than a formal power series) of degree $\deg(Q)-1$ (see \cite[Proposition 4.3 and p. 130]{Ros02}).

  If $\chi$ is even, then $L(\chi,T)$ has a ``trivial'' zero at $T=1$. As in \cite[(3.34)]{KeatRud14}, we define $\lambda_\chi=\delta_{\chi\text{ even}}$, which allows to factor
  \[L(\chi,T)=(1-\lambda_\chi T)L^*(\chi,T), \quad L^*(\chi,T)\in\F_q[T].\]
  If $\chi$ is primitive, Weil's work on the Riemann hypothesis over finite fields (see \cite[Chapters 4, 5]{Ros02}) shows that
  \begin{equation}
    \label{eq:L*theta}
    L^*(\chi,T)=\det(1-\sqrt{q}T\Theta_\chi), \qquad \Theta_\chi\in U_{\deg(Q)-1-\lambda\chi}(\C),
  \end{equation}
  and we let
  \[e(\theta_{\chi,j}),\quad (1\le j\le \deg(Q)-1-\lambda_\chi), \quad \theta_{\chi,j}\in[0,1],\]
  be the eigenvalues of $\Theta_\chi^{-1}$. This is also reflected in the following result:
\begin{theorem}[Katz]\label{thm:chiConstr}
  Let $\F_q$ be a finite field of odd characteristic $p$, $m\ge 2$ be an integer,
  \[E=\Q \left(\zeta_{m-2},\zeta_{4p^r} : 1\le r\le 1+\frac{\log{m}}{\log{p}}\right)\subset\Q(\zeta_{p^\infty},\zeta_n)\]
  with ring of integers $\Oc$, and let $\lambda\in\Lambda:=\Spec_p(\Oc)$.
  \begin{enumerate}
  \item There exists a unipotent group $\W_m$ over $\F_p$ such that $\W_m(\F_q)$ is the group of even characters modulo $T^m\in\F_q[T]$, as well as an open set $\Prim_{m}\subset\W_m$ such that $\Prim_{m}(\F_q)$ is the set of primitive even characters modulo $T^m$.
  \item There exists a lisse sheaf $\Gc_{m,\lambda}$ on $\Prim_{m}$ of free $\Oc_\lambda$-modules, of rank $m-2$, pure of weight $1$, such that for every $\chi\in\Prim_m(\F_q)$,
    \[\det\left(1-T\Frob_{q,\chi}\mid \Gc_{m,\lambda}\right)=L^*(\chi,T),\]
    which is a polynomial of degree $m-2$. In particular, the family $(\Gc_{m,\lambda})_{\lambda\in\Lambda}$ forms a compatible system.
  \item\label{item:chiTwist} The Tate twist $\Fc_{m,\lambda}=\Gc_{m,\lambda}(1/2)$ is a lisse sheaf of free $\Oc_\lambda$-modules on $\Prim_{m}$, pure of weight zero, of rank $m-2$.
  \end{enumerate}
\end{theorem}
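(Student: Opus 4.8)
The plan is to obtain this statement as the $\Oc_\lambda$-integral incarnation of Katz's results in \cite{Kat13} — the paper treating directly the Keating--Rudnick short-interval problem — in exact parallel with Theorems \ref{thm:Klconstr} and \ref{thm:superEvenConstr}. Thus the bulk of the argument is a citation; the genuine work is the bookkeeping of the coefficient ring $E$ and of the local freeness over $\Oc_\lambda$.

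First I would set up the parameter space. An even Dirichlet character modulo $T^m$ over $\F_q$ is trivial on $\F_q^\times$, so it is a character of the finite abelian group $G_{m,q}:=1+T\F_q[T]/(T^m)$ under multiplication. This group is a $p$-group, since $(1+f)^p=1+f^p$ in characteristic $p$; its exponent is $p^{\lceil\log_p m\rceil}$, which is exactly why only the roots of unity $\zeta_{4p^r}$ with $1\le r\le 1+\log m/\log p$ are needed to describe character values. By Artin--Schreier--Witt theory, as carried out in \cite[Sections 1--3]{Kat13}, the dual group of even characters is functorially the group of $\F_q$-points of a unipotent group scheme $\W_m$ over $\F_p$; and $\chi$ being primitive, i.e.\ nontrivial on the last graded piece $1+T^{m-1}\F_q[T]/(T^m)\cong\F_q$, is the complement of a closed subgroup, which cuts out the open subscheme $\Prim_m\subset\W_m$.

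Next, the sheaf. For a fixed primitive even $\chi$, Weil's Riemann Hypothesis over function fields (\cite[Chapters 4--5]{Ros02}) gives $L(\chi,T)=(1-\lambda_\chi T)L^*(\chi,T)$ with $L^*(\chi,T)=\det(1-\sqrt q\,T\,\Theta_\chi)$, $\Theta_\chi\in U_{m-2}(\C)$, as in \eqref{eq:L*theta}; cohomologically, $L^*(\chi,T)$ is the reversed characteristic polynomial of $\Frob_q$ on the weight-one part of $H^1_c$ of a rank-one sheaf on $\G_m$ attached to $\chi$ (equivalently on $H^1$ of its middle extension), a space of dimension $m-2$ by an Euler--Poincaré count, the factor $1-\lambda_\chi T$ being the weight-zero part. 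Katz realizes the variation of this cohomology as $\chi$ runs over $\Prim_m$ by a cohomological construction over $\Prim_m$ (an $\ell$-adic Fourier/Mellin transform of a universal sheaf), and shows that the resulting $R^1$ is lisse of rank $m-2$ over the open locus $\Prim_m$. Since that transform is already defined over $\Oc_\lambda$ and carries lisse sheaves of free $\Oc_\lambda$-modules to complexes of such — exactly as in the integral treatment of Kloosterman sheaves in \cite[Section 8.9]{KatzGKM} — one obtains the lisse sheaf $\Gc_{m,\lambda}$ of free $\Oc_\lambda$-modules of rank $m-2$ on $\Prim_m$, pure of weight $1$ by Deligne, with the asserted trace identity. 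Compatibility of $(\Gc_{m,\lambda})_\lambda$ then holds because the coefficients of $\det(1-T\Frob_q\mid\Gc_{m,\lambda})=L^*(\chi,T)$ are elementary symmetric functions of the values $\chi(P)$, which lie in the field of $p$-power roots of unity and, together with the roots of unity $\zeta_{m-2}$ appearing in Katz's explicit description of $\Gc_{m,\lambda}$, are contained in $E$ and are visibly independent of $\lambda$. Finally $\sqrt q\in\Z[\zeta_{4p}]\subset\Oc$ by the evaluation of quadratic Gauss sums (\cite[11.0]{KatzGKM}), so the half-integral Tate twist $\Fc_{m,\lambda}:=\Gc_{m,\lambda}(1/2)$ is again a lisse sheaf of free $\Oc_\lambda$-modules of rank $m-2$ on $\Prim_m$, now pure of weight $0$; this gives \ref{item:chiTwist}.

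I expect the main obstacle to be precisely this bookkeeping of the integral structure. Katz states his inputs with $\overline{\Q}_\ell$-coefficients, so one must verify that the Fourier/Mellin construction descends to sheaves of \emph{free} $\Oc_\lambda$-modules, and that restricting to the open locus $\Prim_m$ genuinely makes the relevant $R^1$ lisse there (equivalently, kills $R^0$ and $R^2$) — a statement about the wild ramification of $\chi$ at $0$ and $\infty$, governed by Swan conductors, which is where primitivity (and, downstream, the hypothesis $p>m$) is used. As this is entirely parallel to \cite[Sections 8.9, 11.0]{KatzGKM} and to the super-even case of Theorem \ref{thm:superEvenConstr}, I would invoke those rather than reproduce the computations.
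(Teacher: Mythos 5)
Your proposal is correct and follows the same route as the paper: the statement is essentially a citation of \cite[Sections 1--4]{Kat13}, and your unpacking of what that citation contains — the identification of even characters with characters of $1+T\F_q[T]/(T^m)$ via Artin--Schreier--Witt theory, the unipotent group scheme $\W_m$ and the open locus $\Prim_m$, the cohomological realization of $L^*(\chi,T)$ as a reversed characteristic polynomial of Frobenius on an $R^1$ of rank $m-2$, and the integrality via $\sqrt q\in\Z[\zeta_{4p}]$ for the Tate twist — is the right gloss. One small inaccuracy: you suggest $\zeta_{m-2}$ "appears in Katz's explicit description of $\Gc_{m,\lambda}$", but in fact it plays no role in the construction here. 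The paper notes explicitly that adjoining $\zeta_{m-2}$ is unnecessary at this stage; it is inserted into $E$ purely so that, later in Theorem~\ref{thm:monEvenDir}, degree-one primes $\lambda$ of $\Oc$ satisfy $\ell\equiv1\pmod{m-2}$, which is what makes the identification $\PGL_{m-2}(\Oc_\lambda)\cong\SL_{m-2}(\Oc_\lambda)$ work. The compatibility of $(\Gc_{m,\lambda})_\lambda$ already follows from the $\chi(P)$ being $p$-power roots of unity, as you say in the first part of that sentence.
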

In other words, the eigenvalues of $\sqrt{q}\Theta_\chi$ (the zeros of $L^*(\chi,T)$) are the eigenvalues of $\Frob_{\F_q}$ acting on the stalk of $\Gc_{m,\lambda}$ at $\chi$.
\begin{proof}
  This is essentially the contents of \cite[Sections 1-4]{Kat13}. The addition of $\zeta_{m-2}$ is not necessary at this point, but will be useful in Theorem \ref{thm:monEvenDir}.
\end{proof}
\subsection{Existence of the limiting distribution}

We start with an explicit formula for $X_{m,N}(\bs B)$, and proceed as in Section \ref{subsec:limitingDiscExp}.

\begin{proposition}\label{prop:XkNunVar}
  Under the notations of Section \ref{subsec:PPSI}, we have, for $B\in\F_q[T]$ monic of degree $m-1$,
  \begin{eqnarray*}
    X_{m,N}(B)_n&=&-\sum_{f=3}^m\sum_{\substack{\chi\pmod*{T^m}\\ \mathrm{even}\\ \cond(\chi)=T^f}}\sum_{j=1}^{f-2}\quad \overline\chi(B^*)e(\theta_{\chi,j})+\frac{1}{q^{n/2}},
  \end{eqnarray*}
  where $B^*\in\F_q[T]$ is the reflected polynomial defined by $B^*(T)=T^{\deg B}B(1/T)$.
\end{proposition}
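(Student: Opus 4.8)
The plan is to follow the proof of Proposition~\ref{prop:XkNun} closely: unwind the definition of $X_{m,N}(B)_n$, rewrite the short-interval count as a sum over even Dirichlet characters modulo $T^m$ using the reflection of Keating--Rudnick \cite[Section~4]{KeatRud14}, and then substitute the explicit formula supplied by Theorem~\ref{thm:chiConstr}. First, set $h:=n-m$, so that $T^{h+1}B$ is monic of degree $n$; by \eqref{eq:varnu} one has $\E_{q,n}(\nu_{n-m})=q^{n-m+1}(1-q^{-n})$, whence
\[X_{m,N}(B)_n=\frac{q^m}{q^{n/2+1}}\Big(\nu_h(T^{h+1}B)-q^{n-m+1}(1-q^{-n})\Big),\]
so the whole matter reduces to an exact expression for $\nu_h(T^{h+1}B)$.

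For $f$ monic of degree $n$, set $f^*(T):=T^nf(1/T)$, so $f^*(0)=1$. The two key facts are: (i) the condition $\deg(f-T^{h+1}B)\le h$ is equivalent to $f^*\equiv B^*\pmod{T^m}$, and — as both polynomials have constant term $1$ — to the equality of the classes of $f^*$ and $B^*$ in $(\F_q[T]/(T^m))^\times/\F_q^\times$, the character group of which consists precisely of the even characters modulo $T^m$; and (ii) $P\mapsto P^*/P(0)$ is a degree-preserving involution of the set of monic primes $\neq T$, so that $\sum_{f\text{ monic},\,\deg f=n}\Lambda(f)\chi(f^*)=\psi(\chi,n)+1$ for every even character $\chi$, where $\psi(\chi,n):=\sum_{f\text{ monic},\,\deg f=n}\chi(f)\Lambda(f)$ and the $+1$ comes from $f=T^n$. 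Orthogonality of the $\varphi^\ev(T^m)=q^{m-1}$ even characters then gives, for $B\neq T^{m-1}$,
\[\nu_h(T^{h+1}B)=\frac{1}{q^{m-1}}\sum_{\substack{\chi\bmod T^m\\ \mathrm{even}}}\overline\chi(B^*)\,\psi(\chi,n),\]
the would-be extra term $\frac{1}{q^{m-1}}\sum_{\chi\text{ even}}\overline\chi(B^*)$ vanishing since $B^*$ is not a unit constant modulo $T^m$; the degenerate case $B=T^{m-1}$ (equivalently $A=T^n$) is checked by hand. The bookkeeping in this step — the precise normalisation, the role of the prime $T$, and the $\F_q^\times$-ambiguity in leading coefficients — is the part I expect to require the most care.

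Finally, I substitute the explicit formula. For the trivial character $\chi_0$ modulo $T^m$ the prime polynomial theorem gives $\psi(\chi_0,n)=q^n-1$, whose contribution $q^{n-m+1}-q^{1-m}$ to $\nu_h$ cancels $\E_{q,n}(\nu_{n-m})$ exactly, leaving $X_{m,N}(B)_n=q^{-n/2}\sum_{\chi\neq\chi_0\text{ even}}\overline\chi(B^*)\,\psi(\chi,n)$. For $\chi\neq\chi_0$ even with $\cond(\chi)=T^f$ (so $2\le f\le m$), using $L(\chi,T)=L(\chi^*,T)$ for the primitive character $\chi^*$ inducing $\chi$ (legitimate since $T$ is the only prime dividing $T^m$), together with $L(\chi,T)=(1-T)L^*(\chi,T)$ and $L^*(\chi,T)=\det(1-\sqrt qT\,\Theta_\chi)$, $\Theta_\chi\in U_{f-2}(\C)$, from Theorem~\ref{thm:chiConstr}, the logarithmic derivative gives $\psi(\chi,n)=-1-q^{n/2}\tr\Theta_\chi^{\,n}$ (the trace being the empty sum, hence $0$, when $f=2$). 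The constants $-1$ sum to $-q^{-n/2}\sum_{\chi\neq\chi_0\text{ even}}\overline\chi(B^*)=q^{-n/2}$ (because $\sum_{\chi\text{ even}}\overline\chi(B^*)=0$ while $\overline{\chi_0}(B^*)=1$), and the remaining part $-\sum_{\chi\neq\chi_0\text{ even}}\overline\chi(B^*)\tr\Theta_\chi^{\,n}$, regrouped by conductor $T^f$ with $f\ge 3$ and with $\tr\Theta_\chi^{\,n}=\sum_{j=1}^{f-2}e(-n\theta_{\chi,j})$ (where $e(\theta_{\chi,j})$ are the eigenvalues of $\Theta_\chi^{-1}$), yields the stated identity.
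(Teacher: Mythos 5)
Your proof is correct and follows essentially the same route as the paper: reduce to a sum over even Dirichlet characters modulo $T^m$ and then apply the explicit formula $\psi(n,\chi)=-1-q^{n/2}\tr\Theta_\chi^n$. The only difference is that you re-derive the character decomposition of $\nu_h(T^{h+1}B)$ (involution $P\mapsto P^*/P(0)$, treatment of $f=T^n$, cancellation of the $\chi_0$-term against $\E_{q,n}(\nu_{n-m})$), whereas the paper simply cites it as \cite[(4.22)]{KeatRud14}; this extra care is sound and in fact reveals that the statement as printed is missing the factor $n$, i.e.\ the summand should read $\overline\chi(B^*)e(n\theta_{\chi,j})$ (compare Proposition~\ref{prop:XkNun}). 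One small remark: the ``degenerate'' case $B=T^{m-1}$ you set aside actually produces the same final identity, because the extra $+1$ from $\sum_\chi\overline\chi(1)=q^{m-1}$ in $\nu_h$ is exactly compensated once the $\chi_0$-term and the $-1$'s from the nontrivial $\psi(n,\chi)$ are accounted for, so no separate verification is needed.
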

\begin{proof}
  By \cite[(4.22)]{KeatRud14},
  \begin{eqnarray*}
    X_{m,N}(B)_n&=&\frac{1}{q^{n/2}}\sum_{\substack{\chi\pmod*{T^m} \\ \text{even}}}\overline\chi(B^*)\psi(n,\chi),\\
    \psi(n,\chi)&:=&\sum_{\substack{f\in\F_q[T]\\ \deg(f)=n}} \Lambda(f)\chi(f)=-q^{n/2}\tr(\Theta_\chi^n)-1,
  \end{eqnarray*}
  where the last equality is the explicit formula for $\psi$ (see \cite[(3.38)]{KeatRud14}), obtained by taking the logarithmic derivative on both sides of \eqref{eq:L*theta}. Thus,
  \begin{eqnarray*}
    X_{m,N}(B)_n&=&\frac{1}{q^{n/2}}\sum_{\substack{\chi\pmod*{T^m} \\ \text{even}}}\overline\chi(B^*)\tr(\Theta_\chi^n)-\frac{1}{q^{n/2}}\sum_{\substack{\chi\pmod*{T^m} \\ \text{even}}}\overline\chi(B^*).
  \end{eqnarray*}
  The result follows after splitting the first sum according to the conductor of $\chi$ and applying the orthogonality relations in $(\F_q[T]/(T^m))^\times/\F_q^\times$ to the second one.
\end{proof}

Then, the Proof of Theorem \ref{thm:limitDistrExpVar} is exactly like the proof of Theorem \ref{thm:limitDistrExp} (see Section \ref{subsec:limitingDiscExp}). As in Proposition \ref{prop:XkNun}, one may replace the $e(\theta_{\chi,j})$ in Proposition \ref{prop:XkNunVar} by $\cos(2\pi\theta_{\chi,j})$ since $X_{m,N}(B)_n\in \R$.

\subsection{Properties of the limiting distribution under (generic) linear independence}
Again, the proofs of Theorem \ref{thm:distrPropGPLIchi} and Corollary \ref{cor:distrPropGPweakchi} are exactly like the proofs of Theorem \ref{thm:distrPropGPLI} and Corollary \ref{cor:distrPropGPweak} respectively, in Section \ref{subsec:propLimDistrGSHExp}.

\section{An extension of the large sieve for Frobenius}\label{sec:largeSieve}

In the next two sections, we set up the tools to prove the main Theorems \ref{thm:linIndepExp}, \ref{thm:genericLIGP} and \ref{thm:genericLIGPchi} on generic linear independence. As outlined in Section \ref{subsec:methods}, the strategy follows that of previous works and is the following:
\begin{enumerate}
\item Obtain information about integral monodromy groups of reductions of sheaves of $\Oc_\lambda$-modules from Theorem \ref{thm:Klconstr} and \ref{thm:superEvenConstr}, for a set of ideals/valuations $\lambda\in\Spec_{1,p}(\Oc)$ of positive density.
\item\label{item:LSConsAlgRel} Use a variant of the large sieve for Frobenius to show that for all such $\lambda$, the (splitting) fields generated by the roots ($\alpha_{i,f,p}(x)$, $e(\theta_{\Xi,j})$ or $e(\theta_{\chi,j})$) are maximal for almost all tuples of arguments $x$ (resp. $\Xi,\chi$) for exponential sums (resp. (super-)even characters).
\item Apply Girstmair's work to show that \ref{item:LSConsAlgRel} implies the desired linear independence.
\end{enumerate}
The first two points and the variant of the large sieve for Frobenius are implemented in this section, and the third point in Section \ref{sec:genericMax}.
\begin{remark}
  Note that \cite{Kow08,ChaFioJou16b} dealt with symplectic and orthogonal monodromy types. Here, we need to consider special linear and symplectic ones, which will correspond to splitting fields with Galois groups $\Sf_n$ (the full symmetric group), or $W_{2n}\le \Sf_{2n}$, the subgroup with order $2^nn!$ of permutations of $n$ pairs (the Coxeter group $B_n$).
\end{remark}
\begin{remark}
  We consider ideals of degree $1$ so that $\F_\lambda=\F_\ell$ and considerations on the sheaves mod $\lambda$ can be reduced as much as possible to existing arguments, for the large sieve or computations of integral monodromy groups. This is actually not a restriction because $\Spec_{1,p}(\Oc)$ has natural density $1$ in $\Spec(\Oc)$ (\cite[Corollary 2, p. 345, Proposition 7.17]{Nark04})
\end{remark}
\begin{remark}
    Since we considered Tate-twisted/normalized sheaves of $\Oc_\lambda$-modules from the beginning (which also forces the determinant to be trivial and the arithmetic/geometric monodromy groups to coincide, for exponential sums and super-even characters), we will not encounter the difficulty observed in \cite{Kow08,ChaFioJou16b} that the normalized characteristic polynomials may be defined over a quadratic extension of the base field, with the possibility of a different Galois group. This was overcome in ibid. by looking at squares of the roots, and showing that their Galois group was still maximal from a study of additive relations, in addition to the multiplicative ones.
  \end{remark}
\subsection{Integral monodromy groups}\label{subsec:intMonGroups}

The lisse sheaves $\Fc_\lambda$ of free modules on a variety $X$ given by Theorems \ref{thm:Klconstr}, \ref{thm:superEvenConstr} and \ref{thm:chiConstr} correspond to continuous representations $\rho_\lambda: \pi_1(X,\overline\eta)\to\GL_r(\Oc_\lambda),$ for $\overline\eta$ a geometric generic point, such that for every $x\in X(\F_q)$, if $\Frob_{x,q}\in \pi_1(X,\overline\eta)^\sharp$ is the geometric Frobenius conjugacy class at $x$, then $\rho_\lambda \left(\Frob_{x,q}\right)\in\GL_r(\Oc_\lambda)^\sharp$ gives the action of $\Frob_{q}$ on $(\Fc_\lambda)_{x}$.
\begin{definition}[Monodromy groups]
The \emph{geometric and arithmetic monodromy groups} of $\rho_\lambda$ are respectively
\[G_\lambda^\geom:=\overline{\rho_\lambda\Big(\pi_1^{\text{geom}}(X,\overline\eta)\Big)}^\Zar\le G_\lambda:= \overline{\rho_\lambda\Big(\pi_1(X,\overline\eta)\Big)}^\Zar\le\GL_r(\overline{E_\lambda}),\]
where $\overline{\,\cdot\,}^\Zar$ denotes Zariski closure in $\GL_r(\overline{E_\lambda})$. By reducing modulo $\lambda$, we also obtain representations $\tilde \rho_\lambda: \pi_1(X,\overline\eta)\to\GL_r(\F_\lambda)$, and we define the \emph{geometric and arithmetic integral monodromy groups} of $\rho_\lambda$ as the monodromy groups
\[\tilde G_\lambda^\geom:=\tilde\rho_\lambda\Big(\pi_1^{\text{geom}}(X,\overline\eta)\Big)\le \tilde G_\lambda:= \tilde\rho_\lambda\Big(\pi_1(X,\overline\eta)\Big)\le\GL_r(\F_\lambda)\]
of $\tilde\rho_\lambda$. If the adjective ``projective'' is added to those groups, one refers to their image with respect to the projections $\GL_r\to\PGL_r$ (over $\overline{E_\lambda}$ or $\F_\lambda$ respectively).
\end{definition}

\subsubsection{From monodromy to integral monodromy}

The determination of integral monodromy groups may be more challenging that their counterparts over $\overline{E_\lambda}$, since they have less structure (under purity assumption, the connected component at the identity of $G_\lambda^\geom$ is a semisimple algebraic group).

Fortunately, as explained by \cite[Section 7]{Katz12}, one may use deep results of Larsen--Pink \cite{LarsPink92,Lars95} to conclude (roughly) that if the monodromy over $\overline{E_\lambda}$ is as large as possible, then the same holds for a density $1$ of the integral monodromy groups.

Katz's argument is given for sheaves of $\Z_\ell$-modules, but carries over more generally to sheaves of $\Oc_\lambda$-modules: we spelled out the details in \cite[Section 5.2]{PG18}, and the conclusion reads as:

\begin{theorem}\label{thm:KatzLP}
  Let $X$ be a smooth affine geometrically connected variety over $\F_p$, let $E\subset\C$ be a Galois number field with ring of integers $\Oc$, and let $\Lambda$ be a set of valuations on $\Oc$ of natural density 1. Let $(\Fc_\lambda)_{\lambda\in\Lambda}$ be a compatible system with $\Fc_\lambda$ a lisse sheaf of free $\Oc_\lambda$-modules on $X$. We assume that
  \begin{quotation}
    there exists $G\in\{\SL_n,\Sp_{2n}\}$ such that for every $\lambda\in\Lambda$, the arithmetic monodromy group of $\Fc_\lambda$ is conjugate to $G(\overline{E_\lambda})$.
  \end{quotation}
  Then there exists a subset $\Lambda_p\subset\Lambda\cap\Spec_{1,p}(\Oc)$ of natural density 1, depending on $p$ and on the family, such that  $\Fc_\lambda$ has geometric \emph{and} arithmetic integral monodromy groups conjugate to $G(\F_\lambda)$ for all $\lambda\in\Lambda_p$.
\end{theorem}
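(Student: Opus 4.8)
The plan is to follow the argument of Katz in \cite[Section 7]{Katz12} (carried out with $\Oc_\lambda$-coefficients in \cite[Section 5.2]{PG18}), which combines the Larsen--Pink classification of finite subgroups of $\GL_N(\overline{\F_\ell})$ with the compatible-system hypothesis; the passage from $\Z_\ell$- to $\Oc_\lambda$-coefficients is harmless once one restricts to valuations of degree $1$, for which $\F_\lambda\cong\F_\ell$. First I would shrink $\Lambda$ to $\Lambda\cap\Spec_{1,p}(\Oc)$, still of density $1$; for $\lambda$ in this set write $N$ for the rank (so $N=n$ or $2n$), $\ell$ for the prime below $\lambda$, and $V$ for the standard $N$-dimensional module. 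Note that the hypothesis already forces the geometric monodromy group over $\overline{E_\lambda}$ to be $G$ as well: it is a normal algebraic subgroup of $G$ with pro-cyclic (hence abelian) quotient, and for $G\in\{\SL_n,\Sp_{2n}\}$ the only such subgroup is $G$ itself, since the proper normal subgroups are central and $G$ modulo a central subgroup is non-abelian. Next, the easy inclusion $\tilde G_\lambda\le G(\F_\lambda)$ for all but finitely many $\lambda$: when $G=\SL_n$, Zariski density of $\rho_\lambda(\pi_1)$ in $\SL_n$ forces $\det\rho_\lambda\equiv 1$, so $\tilde G_\lambda\le\SL_n(\F_\lambda)$; when $G=\Sp_{2n}$, the symplectic auto-duality gives an alternating form defined over $\Oc$, hence over $\Oc_\lambda$, preserved by $\rho_\lambda$, whose Pfaffian is a fixed nonzero element of $\Oc$, so the form remains nondegenerate mod $\lambda$ for every $\lambda$ not dividing it and then $\tilde G_\lambda\le\Sp_{2n}(\F_\lambda)$. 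Since $\tilde G_\lambda^{\geom}\triangleleft\tilde G_\lambda$, it now suffices to prove $\tilde G_\lambda^{\geom}\supseteq G(\F_\lambda)$ for a density-$1$ set of such $\lambda$.

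Apply the Larsen--Pink theorem \cite{LarsPink92,Lars95} to $\Gamma_\lambda:=\tilde G_\lambda^{\geom}\le\GL_N(\F_\ell)$: there is an absolute constant $J(N)$ and a normal subgroup $\Gamma_\lambda^+\triangleleft\Gamma_\lambda$ of index $\le J(N)$ which, as soon as $\ell$ exceeds an absolute bound, is of the form $\mathbf H_\lambda(\F_\ell)^+$ for a connected semisimple subgroup $\mathbf H_\lambda\le\GL_{N,\overline{\F_\ell}}$ of bounded dimension and with bounded weights on $V$; moreover one may take $\mathbf H_\lambda$ over $\F_\ell$ with $\F_\ell$-rational $N$-dimensional representation. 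To identify $\mathbf H_\lambda$ I would transport the characteristic-zero structure of $G$ across the compatible system: the monodromy over $\overline{E_\lambda}$ being $G$ means the standard representation is absolutely irreducible with the low moments (dimensions of invariants in $V^{\otimes a}\otimes(V^\vee)^{\otimes b}$, $a+b\le 4$) equal to those of $\SL_n$ or $\Sp_{2n}$; since the characteristic polynomials of the $\Frob_{x,q}$ lie in $\Oc[T]$ independently of $\lambda$, these invariant dimensions can be read off from $\lambda$-independent averages of Frobenius traces via Deligne's equidistribution theorem, and so reduce mod $\lambda$ to the same values for all $\ell$ outside a finite set. Feeding this into Larsen's alternative for $\mathbf H_\lambda$ excludes the degenerate options (finite, or a case with a central torus, which would inflate the second or fourth moment or destroy irreducibility) and forces $\mathbf H_\lambda$ to contain a conjugate of $\SL(V)$ or $\Sp(V)$; triviality of the determinant (resp. preservation of the nondegenerate alternating form) then forces the reverse containment, so $\mathbf H_\lambda$ is conjugate to $G$, and $\F_\ell$-rationality together with absolute irreducibility of the representation pins it to the split form $G_{\F_\ell}$. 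For $\ell$ above a further absolute bound $G(\F_\ell)$ is perfect and generated by its $\ell$-Sylow subgroups, so $\Gamma_\lambda^+=\mathbf H_\lambda(\F_\ell)^+=G(\F_\ell)$ (up to conjugacy); then $G(\F_\ell)=\Gamma_\lambda^+\le\tilde G_\lambda^{\geom}\le\tilde G_\lambda\le G(\F_\ell)$ collapses to equalities, giving the claim.

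It remains to check density $1$: the $\lambda\in\Lambda\cap\Spec_{1,p}(\Oc)$ that were discarded are those above $p$ (finitely many), those dividing the Pfaffian in the symplectic case (finitely many), and those whose $\ell$ lies below one of the finitely many absolute thresholds coming from Larsen--Pink, from perfectness of $G(\F_\ell)$, and from the non-collapse of the moments upon reduction (finitely many $\ell$, hence finitely many $\lambda$); so the surviving set $\Lambda_p$ still has natural density $1$. The main obstacle is precisely this last transfer of the characteristic-zero structure of $G$ to the residue characteristic: one must guarantee, uniformly across the family, that the invariant dimensions (equivalently the fourth moment), the nondegeneracy of the symplectic form, and the perfectness of $G(\F_\ell)$ all survive reduction modulo $\lambda$ for all but finitely many $\lambda$. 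This is exactly where the compatible-system hypothesis and the integrality of the $\Fc_\lambda$ (free $\Oc_\lambda$-modules) are indispensable, and it is the one place where Katz's $\Z_\ell$-argument has to be re-inspected in the $\Oc_\lambda$-setting — painlessly, since for degree-$1$ valuations the residue field is still $\F_\ell$.
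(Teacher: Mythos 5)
Your overall plan — Larsen--Pink plus the compatible-system hypothesis, following Katz's argument in \cite[Section 7]{Katz12} as adapted in \cite[Section 5.2]{PG18} — is precisely the route the paper takes, and most of the soft steps are fine: reducing to degree-$1$ valuations, deducing that the geometric monodromy over $\overline{E_\lambda}$ equals the arithmetic one via normality and perfectness of $G$, and the ``easy'' upper bound $\tilde G_\lambda\le G(\F_\lambda)$ from $\det\equiv 1$ (resp.\ preservation of a perfect alternating form).

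The gap is in the lower bound, i.e.\ the identification of $\mathbf H_\lambda$ with $G$, and specifically in the sentence asserting that the invariant dimensions ``reduce mod $\lambda$ to the same values for all $\ell$ outside a finite set.'' This is not a matter of reducing integers mod $\lambda$: the quantities $\dim(V^{\otimes a}\otimes(V^\vee)^{\otimes b})^{\Gamma_\lambda^+}$ in characteristic $\ell$ are attached to the modular representation of the unknown group $\Gamma_\lambda^+$, not to $G$ in characteristic $0$, and there is no direct ``reduction'' map carrying one to the other. Deligne equidistribution gives the characteristic-$0$ moments as $q\to\infty$ limits, but transporting them to the finite group $\Gamma_\lambda^+\subset\GL_N(\F_\ell)$ requires producing enough Frobenius elements landing in the right conjugacy classes mod $\lambda$, which is a Chebotarev-type input. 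That input is exactly what Larsen's theorem \cite{Lars95} supplies, and what it yields is a set of primes of natural density $1$, not a cofinite set. Your discarded set is therefore not shown to be finite; it is only known to have density $0$. This is not merely a cosmetic overclaim: cofiniteness of the ``good'' set of primes for compatible systems with $\SL_n$ or $\Sp_{2n}$ monodromy is precisely what is \emph{not} available in general (it is known for $\GL_2$ via Serre's open image theorem, but not for higher rank), and the theorem statement's ``natural density $1$, depending on $p$ and on the family'' is there exactly because the exceptional set is controlled only up to density. The fix is simply to invoke \cite{Lars95} for the density-$1$ statement instead of arguing cofiniteness by hand; the rest of your reduction then goes through as you wrote it.

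A smaller point: the alternating form in the $\Sp_{2n}$ case lives a priori over $\Oc_\lambda$ on the free module $\Fc_\lambda$, not over $\Oc$, and its discriminant need not be a unit for every $\lambda$; the correct statement is that the form is perfect over $\Oc_\lambda$ for the sheaves actually used (by construction), or else one again only gets a density-$1$ set of $\lambda$ where the reduction is nondegenerate. Either way this does not affect the density-$1$ conclusion.
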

\begin{remark}[Implied constants]
  The dependency of the sets of valuations on some of the variables $p,k,m$ in Theorems \ref{thm:monBirch}, \ref{thm:monSuperEven} and \ref{thm:monEvenDir} below will give dependencies on those of the implied constants in the final results.
\end{remark}
\begin{remark}[Strong approximation]
  Another method to get information on integral monodromy groups from the transcendental ones is through strong approximation results for arithmetic groups, as explained in \cite[Section 9]{Katz12} (see also \cite[Section 5]{JKZ13}); this is for example used in \cite{ChaFioJou16b}. In those cases, \cite{Pink00} (a generalization of \cite{MVW84,Weis84}) allows to show that the integral monodromy is large for all but finitely many primes. Moreover, by also using results of \cite{LarsPink92}, it avoids the classification of finite simple groups, unlike \cite{MVW84,Weis84}.

  However, this requires that the sheaves $\Fc_\lambda$ on $X$ may be formed over the analytification $X^\an$: a sheaf $\Fc^\an$ of finitely generated $\Oc$-modules is constructed on $X^\an$, whose extension of scalars to $\Oc_\lambda$ corresponds to the analytification of $\Fc_\lambda$, and strong approximation can then be applied to the monodromy of $\Fc^\an$ in $G(\Oc)$ to yield the result. This can be done in the case of families of $L$-functions considered in \cite{Katz12,ChaFioJou16b}, but a priori not for the sheaves from Theorems \ref{thm:Klconstr} and \ref{thm:superEvenConstr} (one may think about Artin--Schreier sheaves, i.e. Kloosterman sheaves of rank 1, as a first example)
\end{remark}

\subsubsection{Kloosterman and Birch sheaves}
Combining Theorem \ref{thm:KatzLP} with the determination of monodromy groups over $\overline E_\lambda$ by Katz, we obtain the following:

\begin{theorem}[Kloosterman sheaves]\label{thm:monKl}
  In the setting of Theorem \ref{thm:Klconstr}, there exists a subset $\Lambda_{r,p}$ of $\Spec_{1,p}(\Oc)$, of natural density $1$, such that for every $\lambda\in\Lambda_{r,p}$, the arithmetic and geometric integral monodromy groups of $\Klc_{r,\lambda}$ are equal and conjugate to $\SL_r(\F_\lambda)$ if $r$ is odd, $\Sp_r(\F_\lambda)$ if $r$ is even.
\end{theorem}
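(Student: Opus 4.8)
The plan is to apply Theorem \ref{thm:KatzLP} to the compatible system $(\Klc_{r,\lambda})_{\lambda\in\Lambda}$ from Theorem \ref{thm:Klconstr}, with $X=\G_{m,\F_p}$. The hypotheses of Theorem \ref{thm:KatzLP} are almost entirely met by construction: $\G_{m,\F_p}$ is smooth, affine and geometrically connected; $E=\Q(\zeta_{4p})$ is a Galois number field; and $\Lambda=\Spec_p(\Oc)$ has natural density $1$ in $\Spec(\Oc)$. The one thing still to be supplied is the input hypothesis, namely that there is a fixed group $G\in\{\SL_r,\Sp_r\}$ such that the \emph{arithmetic} monodromy group of $\Klc_{r,\lambda}$ over $\overline{E_\lambda}$ is conjugate to $G(\overline{E_\lambda})$ for every $\lambda$. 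So the first step is to recall this from Katz's work.

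Concretely, I would cite \cite[Chapter 11]{KatzGKM}: for the normalized (Tate-twisted) Kloosterman sheaf $\Klc_{r,\lambda}$ of rank $r$ on $\G_m$ in characteristic $p>r$ (which holds here since $p>5$ and... more precisely one needs $p\nmid 2r$; in our setting $p>5$ and the statement is for fixed $r$, so for $p$ large enough relative to $r$ this is automatic, and in general Katz treats $p$ odd with the relevant non-vanishing hypotheses), the geometric monodromy group is the full $\SL_r$ when $r$ is odd and the full $\Sp_r$ when $r$ is even, the symplectic autoduality coming from the self-duality $\Kl_r(a)\mapsto\Kl_r((-1)^r a)$ of the Kloosterman sheaf. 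Because we have Tate-twisted to weight $0$ and the determinant of the normalized sheaf is geometrically and arithmetically trivial, the arithmetic monodromy group coincides with the geometric one (this is exactly the point flagged in the third remark of Section \ref{subsec:intMonGroups}), so the arithmetic monodromy group is also conjugate to $\SL_r(\overline{E_\lambda})$ resp. $\Sp_r(\overline{E_\lambda})$. This verifies the quoted hypothesis of Theorem \ref{thm:KatzLP} with $G=\SL_r$ ($r$ odd) or $G=\Sp_r$ ($r$ even).

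With the hypothesis in hand, Theorem \ref{thm:KatzLP} immediately produces a subset $\Lambda_{r,p}\subset\Lambda\cap\Spec_{1,p}(\Oc)$ of natural density $1$ such that for every $\lambda\in\Lambda_{r,p}$ the geometric and arithmetic integral monodromy groups of $\Klc_{r,\lambda}$ are both conjugate to $G(\F_\lambda)$, i.e. to $\SL_r(\F_\lambda)$ if $r$ is odd and $\Sp_r(\F_\lambda)$ if $r$ is even. This is exactly the assertion of Theorem \ref{thm:monKl}, so the proof is essentially a one-line deduction once the monodromy input is cited.

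The only place where any care is needed — and hence the main (mild) obstacle — is checking that Katz's determination of the geometric monodromy group applies uniformly in $\lambda$ under our running hypothesis $p>5$ and $r$ fixed: Katz's big-monodromy theorem for $\Kl_r$ requires mild conditions such as $p\nmid 2r$ (and in the symplectic case $p$ odd), which may fail for the finitely many primes $p$ dividing $2r$. Since $r$ is fixed and the final results carry an implied constant depending on $r$ (and $p$), one can either simply note that for $p>5$ with $p\nmid 2r$ the hypotheses hold, or invoke the refined statements available in the Kloosterman case (as alluded to in Section \ref{subsec:methods}, ``or more precise results in the case of Kloosterman sums''), e.g. the integral monodromy computations of Hall, or of Katz, that handle small characteristic directly. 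In any event this does not affect the conclusion as stated, since $\Lambda_{r,p}$ is allowed to depend on both $r$ and $p$.
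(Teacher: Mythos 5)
Your proof is correct and follows exactly the same route as the paper: cite Katz's determination of the geometric (hence, after Tate twist, arithmetic) monodromy over $\overline{E_\lambda}$ from \cite[Chapter 11]{KatzGKM}, then feed this into Theorem \ref{thm:KatzLP} to descend to integral monodromy for a density-one set of $\lambda$. The paper's proof is a one-line version of this; your extra remarks about the restrictions on $p$ relative to $r$ in Katz's monodromy theorem, and about the sharper integral-monodromy inputs of Hall/Yu/Perret-Gentil, correspond to the paper's Remark \ref{rem:monKl} and do not change the argument.
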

\begin{proof}
  This follows from Theorem \ref{thm:KatzLP} and the determination of monodromy groups over $\overline E_\lambda$ contained in \cite[Chapter 11]{KatzGKM}.
\end{proof}
\begin{remark}\label{rem:monKl}
  By work of Hall \cite{Hall08} or J-K. Yu (unpublished) when $r=2$, and the author \cite{PGIntMonKS16} for any $r\ge 2$, one may actually take
  \begin{equation}
    \label{eq:LambdaKlc}
    \Lambda_{r,p}=\{\lambda\in\Spec_{1,p}(\Oc)\text{ above }\ell : \ell\gg_r 1\}.
  \end{equation}
  In particular, the densities of elements $\Lambda_{r,p}$ with bounded norm are bounded from below independently of $p$.
\end{remark}

\begin{theorem}[Birch sheaves]\label{thm:monBirch}  
  In the setting of Theorem \ref{thm:Klconstr}\ref{item:Biconstr}, there exists a subset $\Lambda_{p}$ of $\Spec_{1,p}(\Oc)$, of natural density $1$, such that for every $\lambda\in\Lambda_p$, the arithmetic and geometric integral monodromy groups of $\Bic_\lambda$ are equal to $\SL_2(\F_\lambda)$.
\end{theorem}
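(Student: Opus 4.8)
The plan is to mimic the proof of Theorem~\ref{thm:monKl}: combine Katz's determination of the monodromy group over $\overline{E_\lambda}$ with the descent to integral monodromy provided by Theorem~\ref{thm:KatzLP}.

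First I would recall the transcendental input. By Katz \cite[7.12]{KatzESDE} (see also \cite[Part~3]{KatzMonodromyFamES}), since $p>5$, the geometric monodromy group of the Birch sheaf $\Bic_\lambda$ on $\G_{m,\F_p}$ over $\overline{E_\lambda}$ is $\SL_2(\overline{E_\lambda})=\Sp_2(\overline{E_\lambda})$, and this holds for \emph{every} $\lambda\in\Lambda=\Spec_p(\Oc)$, the computation being uniform in $\ell$. Because $\Bic_\lambda$ has rank $2$ and, after the normalizing Tate twist built into Theorem~\ref{thm:Klconstr}\ref{item:Biconstr}, trivial determinant (see the discussion preceding Section~\ref{subsec:intMonGroups}), its arithmetic monodromy group lies in $\SL_2$ as well; being sandwiched between the geometric monodromy group and $\SL_2(\overline{E_\lambda})$, it also equals $\SL_2(\overline{E_\lambda})$. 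Hence the hypothesis of Theorem~\ref{thm:KatzLP} is met with $G=\SL_2$.

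Next I would apply Theorem~\ref{thm:KatzLP} to the compatible system $(\Bic_\lambda)_{\lambda\in\Lambda}$ of lisse sheaves of free $\Oc_\lambda$-modules on the smooth affine geometrically connected variety $X=\G_{m,\F_p}$, with $E=\Q(\zeta_{4p})$ a Galois number field and $\Lambda=\Spec_p(\Oc)$ of natural density $1$ in $\Spec(\Oc)$. This produces a subset $\Lambda_p\subset\Lambda\cap\Spec_{1,p}(\Oc)$ of natural density $1$ for which the geometric and arithmetic integral monodromy groups of $\Bic_\lambda$ are conjugate to $\SL_2(\F_\lambda)$ inside $\GL_2(\F_\lambda)$ for all $\lambda\in\Lambda_p$. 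Since $\SL_2(\F_\lambda)$ is normal in $\GL_2(\F_\lambda)$, any such conjugate equals $\SL_2(\F_\lambda)$, which gives the statement.

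I do not expect any real obstacle here: the entire substance sits in Theorem~\ref{thm:KatzLP} (the Larsen--Pink machinery as packaged by Katz and in \cite{PG18}) and in Katz's computation of the geometric monodromy. The only point that genuinely needs care is the triviality of the determinant of the normalized Birch sheaf, which is what guarantees that the \emph{arithmetic} monodromy group over $\overline{E_\lambda}$ — and not merely the geometric one — is all of $\SL_2(\overline{E_\lambda})$, as required to invoke Theorem~\ref{thm:KatzLP}; this triviality is precisely what the Tate twist in Theorem~\ref{thm:Klconstr} arranges.
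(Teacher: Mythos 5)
Your proposal is correct and follows exactly the route of the paper's (very brief) proof: apply Theorem~\ref{thm:KatzLP} to the compatible system $(\Bic_\lambda)_\lambda$, using Katz's identification of the monodromy over $\overline{E_\lambda}$ as $\SL_2$ from \cite[7.12]{KatzESDE}. Your added remarks — that the Tate twist forces the determinant to be trivial so that the arithmetic monodromy, not just the geometric one, is $\SL_2(\overline{E_\lambda})$, and that normality of $\SL_2$ in $\GL_2$ upgrades "conjugate to" into "equal to" — are valid and make explicit points the paper leaves implicit.
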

\begin{proof}
  This follows from Theorem \ref{thm:KatzLP} and the determination of monodromy groups over $E_\lambda$ in \cite[7.12]{KatzESDE}.
\end{proof}
\subsubsection{Primitive super-even characters}
\begin{theorem}\label{thm:monSuperEven}
  In the setting of Theorem \ref{thm:superEvenConstr} \ref{item:superEvenTwist}, assuming that $k\ge 4$, there exists a subset $\Lambda_{k,p}\subset\Spec_{1,p}(\Oc)$ of natural density $1$ such that for every $\lambda\in\Lambda_{k,p}$, the arithmetic and geometric integral monodromy groups of $\Gc_{k,\lambda}$ are equal and conjugate to $\Sp_{2\kappa-2}(\F_\lambda)$.
\end{theorem}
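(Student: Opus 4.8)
The plan is to apply Theorem \ref{thm:KatzLP} with $G = \Sp_{2\kappa-2}$, so that the whole content of the proof reduces to verifying the hypothesis of that theorem: namely, that for every $\lambda \in \Lambda := \Spec_p(\Oc)$ the arithmetic monodromy group of $\Gc_{k,\lambda}$ (equivalently, of its Tate twist $\Fc_{k,\lambda}$) is conjugate to $\Sp_{2\kappa-2}(\overline{E_\lambda})$. Once this is in place, Theorem \ref{thm:KatzLP} directly produces a density-$1$ subset $\Lambda_{k,p} \subset \Lambda \cap \Spec_{1,p}(\Oc)$ on which the geometric \emph{and} arithmetic integral monodromy groups of $\Gc_{k,\lambda}$ are conjugate to $\Sp_{2\kappa-2}(\F_\lambda)$, which is exactly the assertion. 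So the real work is the input over $\overline{E_\lambda}$, and here the reference is Katz's determination of the monodromy of these sheaves.

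First I would recall from Theorem \ref{thm:superEvenConstr}\ref{item:superEvenTwist} that $\Fc_{k,\lambda} = \Gc_{k,\lambda}(1/2)$ is lisse of rank $2\kappa-2$, pure of weight $0$, with symplectic autoduality on $\Prim_{k\codd}$; in particular the geometric monodromy group is a priori contained in $\Sp_{2\kappa-2}$. The key transcendental input is Katz's computation in \cite{Katz16} (building on \cite{Kat13}) that, for $k \ge 4$ even, the geometric monodromy group of $\Gc_{k,\lambda}$ on $\Prim_{k\codd}$ is the full symplectic group $\Sp_{2\kappa-2}$ — this is where the hypothesis $k \ge 4$ (equivalently $\kappa \ge 2$, so that the rank is positive and the relevant irreducibility and big-monodromy arguments of Katz go through) is used. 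Since $G_\lambda^{\geom} \le G_\lambda \le \Sp_{2\kappa-2}$ and $G_\lambda^{\geom} = \Sp_{2\kappa-2}$, the arithmetic monodromy group $G_\lambda$ must also equal $\Sp_{2\kappa-2}(\overline{E_\lambda})$, so arithmetic and geometric monodromy coincide. (One can also see the coincidence more directly: the sheaf is pure of weight $0$ with trivial determinant, forcing the arithmetic group into the same semisimple group as the geometric one.) That $(\Gc_{k,\lambda})_{\lambda}$ is a compatible system is already recorded in Theorem \ref{thm:superEvenConstr}.

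With the hypothesis of Theorem \ref{thm:KatzLP} verified uniformly over $\lambda \in \Lambda$, I would invoke that theorem with $X = \Prim_{k\codd}$ (smooth, affine, geometrically connected over $\F_p$, as guaranteed by Theorem \ref{thm:superEvenConstr}), $E = \Q(\zeta_{4p^r} : 1 \le r \le 1 + \log k/\log p)$ with its ring of integers $\Oc$, $\Lambda = \Spec_p(\Oc)$ of density $1$, and $G = \Sp_{2\kappa-2}$, to obtain the desired density-$1$ set $\Lambda_{k,p} \subset \Spec_{1,p}(\Oc)$. The dependence of $\Lambda_{k,p}$ on $k$ and $p$ is inherited from the Larsen--Pink step inside Theorem \ref{thm:KatzLP}, as flagged in the implied-constants remark, and this is harmless for the applications.

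The main obstacle is not the formal deduction — that is a direct citation of Theorem \ref{thm:KatzLP} — but rather ensuring that Katz's determination of the geometric monodromy group in \cite{Katz16} genuinely gives the \emph{full} symplectic group $\Sp_{2\kappa-2}$ (and not merely a group with the same Zariski-dense image, or a subgroup like a smaller symplectic factor) for all even $k \ge 4$, and that it applies to the sheaf of $\Oc_\lambda$-modules of Theorem \ref{thm:superEvenConstr} rather than only to an $\overline{\Q_\ell}$-coefficient version. The former is exactly the content of Katz's theorem, so there is nothing to prove beyond citing it carefully; the latter is why one needs the compatible-system formulation and the observation (as in the treatment of the large sieve in Section \ref{sec:largeSieve}) that the relevant reductions mod $\lambda$ behave uniformly. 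If $k$ is odd, a separate argument for the monodromy would be needed, which is why the statement restricts to even $k$ (consistent with the standing assumption $k \ge 2$ even in Theorem \ref{thm:superEvenConstr}).
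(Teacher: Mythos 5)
Your proposal is correct and matches the paper's proof: the paper also deduces the result by combining Theorem \ref{thm:KatzLP} with Katz's determination of the monodromy group over $\overline{E_\lambda}$ in \cite[Theorem 2.5]{Katz16} (which relies on \cite[3.10]{KatzMMP}). Your write-up merely spells out the routine verification of the hypothesis of Theorem \ref{thm:KatzLP}, which the paper leaves implicit.
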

\begin{proof}
  This follows from Theorem \ref{thm:KatzLP} and the determination of monodromy groups over $E_\lambda$ in \cite[Theorem 2.5]{Katz16} (using results from \cite[3.10]{KatzMMP}).
\end{proof}
\subsubsection{Primitive even characters mod $T^m$}
 
  \begin{theorem}\label{thm:monEvenDir}
    In the setting of Theorem \ref{thm:chiConstr} \ref{item:chiTwist}, assuming that $m\ge 5$ is odd, there exists a subset $\Lambda_{m,p}\subset\Spec_{1,p}(\Oc)$ of natural density 1 such that for every $\lambda\in\Lambda_{m,p}$, the projective arithmetic and geometric integral monodromy groups of $\Gc_{m,\lambda}$ are conjugate to $\PSL_{m-2}(\F_\lambda)$. 
\end{theorem}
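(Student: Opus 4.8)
The plan is to invoke Theorem \ref{thm:KatzLP} once more, exactly as in the proofs of Theorems \ref{thm:monKl}, \ref{thm:monBirch} and \ref{thm:monSuperEven}, but now in the projective setting: the determination of the monodromy group over $\overline{E_\lambda}$ is available from Katz's work on the sheaves $\Gc_{m,\lambda}$ attached to even Dirichlet characters modulo $T^m$ (this is \cite[Sections 1--4]{Kat13}, specifically the monodromy computation carried out there for $m$ odd, $m\ge 5$), which gives that the geometric and arithmetic monodromy groups over $\overline{E_\lambda}$ are both conjugate to $\SL_{m-2}(\overline{E_\lambda})$. The subtlety compared to the earlier theorems is that Theorem \ref{thm:chiConstr} does \emph{not} guarantee that $\Gc_{m,\lambda}$ has trivial determinant, so one only controls monodromy \emph{projectively}; hence the conclusion is phrased in terms of $\PSL_{m-2}(\F_\lambda)$.

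First I would apply Theorem \ref{thm:KatzLP} to the compatible system $(\Gc_{m,\lambda})_{\lambda\in\Lambda}$ with $G=\SL_{m-2}$, after possibly replacing $\Gc_{m,\lambda}$ by the twist $\Gc_{m,\lambda}\otimes L$ for a rank-$1$ system $L$ chosen so that the determinant becomes trivial — equivalently, I would work directly with the induced representation into $\PGL_{m-2}$ and note that the Larsen--Pink/Katz machinery of \cite[Section 7]{Katz12} (as spelled out in \cite[Section 5.2]{PG18}) applies verbatim to the projective image: the hypothesis needed is only that the projective monodromy group over $\overline{E_\lambda}$ is $\PSL_{m-2}(\overline{E_\lambda})=\PGL_{m-2}(\overline{E_\lambda})^{\circ}$ for all $\lambda$ in a density-$1$ set, which is what \cite{Kat13} provides. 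The output is a density-$1$ subset $\Lambda_{m,p}\subset\Spec_{1,p}(\Oc)$ such that for $\lambda\in\Lambda_{m,p}$ the projective geometric \emph{and} arithmetic integral monodromy groups of $\Gc_{m,\lambda}$ are conjugate to $\PSL_{m-2}(\F_\lambda)$.

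Two points deserve care. The odd-$m$ hypothesis with $m\ge 5$ is exactly what makes Katz's monodromy computation for the $T^m$ family give the full special linear group (for small or even $m$ the group can be smaller or of a different type, e.g. symplectic or orthogonal via an autoduality), so this must be cited precisely from \cite{Kat13}; this restriction is inherited in the statement. The second point is the passage from the generic-fibre result to the integral one: here one must check that the relevant classification-of-finite-simple-groups input in Larsen--Pink \cite{LarsPink92,Lars95} handles the type $A_{m-2}$ projective case, which it does, and that $m-2$ being coprime to $\ell$ for all but finitely many $\lambda$ (automatic since we range over $\lambda$ of arbitrarily large residue characteristic) removes the usual small-characteristic pathologies; these are routine verifications of the hypotheses of Theorem \ref{thm:KatzLP}.

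The main obstacle I anticipate is purely bookkeeping rather than conceptual: ensuring that Theorem \ref{thm:KatzLP}, as stated for the honest (non-projective) groups $\SL_n,\Sp_{2n}$, transfers cleanly to the projective situation forced on us by the possibly-nontrivial determinant of $\Gc_{m,\lambda}$. This is handled either by a determinant twist to reduce to the $\SL$ case and then projecting, or by observing that the proof of Theorem \ref{thm:KatzLP} in \cite{PG18,Katz12} is insensitive to passing to $\PGL$ — the Larsen--Pink dichotomy and Nori-type arguments are applied to the (semisimple) image, and the center plays no role. Once this is in place, the proof is a one-line citation, mirroring the proofs of Theorems \ref{thm:monKl}--\ref{thm:monSuperEven} above.

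\begin{proof}
  This follows from Theorem \ref{thm:KatzLP} and the determination of the (projective) monodromy groups over $\overline{E_\lambda}$ in \cite[Sections 1--4]{Kat13}: for $m\ge 5$ odd, the geometric and arithmetic monodromy groups of $\Gc_{m,\lambda}$ are conjugate to $\SL_{m-2}(\overline{E_\lambda})$. Since Theorem \ref{thm:chiConstr} does not ensure that $\det\Gc_{m,\lambda}$ is trivial, we only obtain, via the Larsen--Pink arguments of \cite[Section 7]{Katz12} (detailed in \cite[Section 5.2]{PG18}, which apply equally to the image in $\PGL_{m-2}$), a density-$1$ subset $\Lambda_{m,p}\subset\Spec_{1,p}(\Oc)$ such that the projective arithmetic and geometric integral monodromy groups of $\Gc_{m,\lambda}$ are conjugate to $\PSL_{m-2}(\F_\lambda)$ for every $\lambda\in\Lambda_{m,p}$.
\end{proof}
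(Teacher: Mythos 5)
Your proposal identifies the right starting point (Katz's monodromy computation from \cite{Kat13} and the transfer-to-integral-monodromy machinery of Theorem~\ref{thm:KatzLP}) and correctly senses that the nontrivial determinant of $\Gc_{m,\lambda}$ is the sticking point, but the way you resolve it has a genuine gap, and it also contradicts a claim you make earlier in the same paragraph.

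First, your opening claim that Katz's work ``gives that the geometric and arithmetic monodromy groups over $\overline{E_\lambda}$ are both conjugate to $\SL_{m-2}(\overline{E_\lambda})$'' is not what \cite[Theorem~5.1]{Kat13} says and is inconsistent with your own later observation that $\det\Gc_{m,\lambda}$ need not be trivial. What Katz gives is a sandwich $\SL_{m-2}\le G_\geom\le G_\arith\le\GL_{m-2}$, so the correct conclusion at the transcendental level is only that the \emph{projective} monodromy groups equal $\PGL_{m-2}$. This is more than a slip of notation: it is precisely the reason Theorem~\ref{thm:KatzLP}, as stated for $G\in\{\SL_n,\Sp_{2n}\}$, does not apply directly, and the gap must be bridged.

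Second, both of your proposed bridges are left unestablished. The determinant twist $\Gc_{m,\lambda}\otimes L$ requires an $(m-2)$th root of $\det\Gc_{m,\lambda}^{-1}$ as a rank-one lisse sheaf of $\Oc_\lambda$-modules, which is not automatic; and the assertion that the Larsen--Pink/Katz arguments (and Theorem~\ref{thm:KatzLP}) ``apply verbatim to the projective image'' is exactly the step the paper explicitly declines to take, stating that projective representations are \emph{not} directly handled by that theorem. The paper's actual fix is more concrete and is the reason $\zeta_{m-2}$ was deliberately included in the coefficient field $E$ in Theorem~\ref{thm:chiConstr}: for $\lambda\in\Spec_{1,p}(\Oc)$ with $\ell\nmid m-2$, the degree-one condition in a field containing $\Q(\zeta_{m-2})$ forces $\ell\equiv 1\pmod{m-2}$, and Hensel's lemma then gives $(m-2)$th roots in $\Oc_\lambda$, yielding an isomorphism $\PGL_{m-2}(\Oc_\lambda)\cong\SL_{m-2}(\Oc_\lambda)$. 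Composing $\rho_\lambda$ with the projection to $\PGL_{m-2}$ and this isomorphism produces an honest $\SL_{m-2}(\Oc_\lambda)$-valued representation with transcendental monodromy $\SL_{m-2}$, to which Theorem~\ref{thm:KatzLP} applies as stated; reduction mod $\lambda$ commutes with the projection, giving the claim about projective integral monodromy. Without spotting the role of $\zeta_{m-2}\in E$ and the degree-one restriction, neither of your two routes closes the argument.
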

\begin{proof}
  By, \cite[Theorem 5.1]{Kat13},
  \[\SL_{m-2}(\C)\le G_\geom(\Gc_{m,\lambda})\le G_\arith(\Gc_{m,\lambda})\le\GL_{m-2}(\C),\]
  whence $PG_\geom(\Gc_{m,\lambda})=PG_\arith(\Gc_{m,\lambda})=\PGL_{m-2}(\C)$.

  However, projective representations are not directly handled in Theorem \ref{thm:KatzLP}. Instead, we note that if $\lambda\in\Spec_{1,p}(\Oc)$ is above $\ell\nmid m-2$, then $\ell\equiv 1\pmod{m-2}$ (by the characterization of ideals of degree $1$ in cyclotomic extensions), so Hensel's lemma implies that every element of $\Oc_\lambda$ has an $(m-2)$th root, whence $\PGL_{m-2}(\Oc_\lambda)\cong\SL_{m-2}(\Oc_\lambda)$.

  If $\Gc_{m,\lambda}$ corresponds to a representation $\rho_\lambda: \pi_1(X,\overline\eta)\to\GL_{m-2}(\Oc_\lambda)$ and $\pi: \GL_{m-2}\to\PGL_{m-2}$ is the projection, we get in this case a continuous representation $\pi\circ\rho_\lambda: \pi_1(X,\overline\eta)\to\SL_{m-2}(\Oc_\lambda)$ with transcendental arithmetic and geometric monodromy groups isomorphic to $\SL_{m-2}(\C)$. We may then apply Theorem \ref{thm:KatzLP} to get that the arithmetic and geometric integral monodromy of $\pi\circ\rho_\lambda$ are $\SL_{m-2}(\F_\lambda)\cong\PGL_{m-2}(\F_\lambda)$ for a subset of density $1$ of $\Spec_{1,p}(\Oc)$. Since $\im(\pi\circ\rho_\lambda\pmod*{\lambda})=\pi(\im\rho_\lambda \pmod*{\lambda})$, this proves the assertion on the projective monodromy groups of $(\Gc_{m,\lambda})_{\lambda}$.  
\end{proof}
\subsection{Large sieve for Frobenius, with wild ramification}
Next, we need a version of the large sieve for Frobenius, originally developed in \cite{KowLS06} (see also \cite{KowLargeSieve08,Kow08}).

In these works as well as in \cite{ChaFioJou16b}, the sieve applies to sheaves of $\F_\ell$-modules on a variety $X$ over $\F_p$, that either:
\begin{enumerate}
\item\label{item:compSystems} are compatible systems, with $X$ a curve;
\item\label{item:tameRam} are tamely ramified;
\item\label{item:primeTop} have monodromy group of cardinality prime to $p$, a stronger condition than the previous one.
\end{enumerate}
For Kloosterman and Birch sums, \ref{item:compSystems} applies. However, for super-even characters, the variety is not a curve, and the sheaves are a priori not tamely ramified, which rules out \ref{item:tameRam}. Concerning \ref{item:primeTop}, note that for $E=\Q(\zeta_{p^N})$ and $\lambda\in\Spec(\Oc)$, the prime $p$ always divides $|\SL_r(\F_\lambda)|$ and $|\Sp_r(\F_\lambda)|$ (if $r$ is even).

\subsubsection{Extension of the large sieve for Frobenius}

Instead, we give an extension of \cite[Theorem 3.1]{KowLS06}/\cite[Theorems 4.1, 4.3]{Kow08} that works in this case and answers the question in \cite[Remark 4.8]{KowLS06}. To bound the sums of Betti numbers that appear, we give two arguments:
\begin{enumerate}
\item One, Theorem \ref{thm:largeSieve}\ref{item:modular}, involving sums of Betti numbers associated to tensor powers of the sheaves, inspired by \cite[Section 4]{KowLS06}, \cite[Theorem 9.2.6]{KatzSarnak91}, \cite[Lemma 5.2]{Katz16}, and an effective/modular version of a theorem of Burnside on irreducible representations contained in tensor powers of faithful representations.
\item Another, Theorem \ref{thm:largeSieve}\ref{item:compSystem}, provided by Will Sawin, reducing to the tame case (where a result of Deligne \cite{Ill81} on the Euler characteristic of tamely ramified sheaves can be applied) by exploiting the presence of a compatible system. This gives a much stronger bound, but with less explicit constants.
\end{enumerate}

\begin{definition}\label{def:Betti}
  Let $X$ be a smooth affine geometrically connected algebraic variety over $\F_p$, $E$ be a number field with ring of integers $\Oc$, let $\lambda,\lambda'\in\Spec_{1,p}(\Oc)$, and let $\Fc$ be a lisse sheaf of $R$-modules on $X$, where $R=\overline\Q_\ell$, $\Oc_\lambda$, $\Oc_\lambda\otimes\Oc_{\lambda'}$, $\F_\lambda$, or $\F_\lambda\otimes\F_{\lambda'}$. We define the sum of Betti numbers
  \[\sigma_c(X,\Fc)=\sum_{i=0}^{2\dim X} \rank H_c^i(X,\Fc),\]
  where the rank of an $R$-module is defined as its dimension over the total ring of fractions of $R$ (recall that these cohomology groups are finitely generated by \cite[Exposé 1, Théorème 4.6.2]{DelEC}).

  If $X$ is a curve and $R=\Oc_\lambda$, we moreover define
  \[\cond(\Fc_\lambda)=1-\chi_c(X,\Q_\ell)+2\sum_{x}\Swan_x(\Fc_\lambda)\]
  to be the quantity in \cite[(4.1)]{KowLS06} (see also \cite[Chapters 1--2]{KatzGKM}), where the sum is over ``points at infinity'' of $X$.
\end{definition}

\begin{theorem}\label{thm:largeSieve}
  Let $X$ be a smooth affine geometrically connected algebraic variety of dimension $d$ over $\F_p$. For $E$ a number field with ring of integers $\Oc$, let $\Lambda\subset\Spec_{1,p}(\Oc)$ with lower density
  \begin{equation*}
    \label{eq:deltaLambda}
    \delta_\Lambda:=\liminf_{L\to\infty} \frac{|\{\lambda\in\Lambda : N(\lambda)\le L\}|}{L/\log{L}}>0.
  \end{equation*}
  For every $\lambda\in\Lambda$, let $\Fc_\lambda$ be a rank $r$ lisse sheaf of $\F_\lambda$-modules on $X$, corresponding to a representation
  \begin{equation}
    \label{eq:rholambda}
    \rho_\lambda: \pi_1(X,\overline\eta)\to\GL_{r}(\F_\lambda),
  \end{equation}
  for $\overline\eta$ a geometric generic point. We assume that there exists $G\in\{\SL_r, \Sp_r\}$ such that either:
  \begin{enumerate}[(i)]
  \item\label{item:notPG} the arithmetic and geometric monodromy groups of $\rho_\lambda$ are equal and conjugate to $G(\F_\lambda)$ for all $\lambda\in\Lambda$, or;
  \item\label{item:PG} the \emph{projective} arithmetic and geometric monodromy group of $\rho_\lambda$ are equal and conjugate to $\PGL_r(\F_\lambda)$ for all $\lambda\in\Lambda$, and $\zeta_r\in E$, so that\footnote{See the proof of Theorem \ref{thm:monEvenDir}, recalling that $\lambda$ has degree $1$.} $\PGL_r(\F_\lambda)=\SL_r(\F_\lambda)=G(\F_\lambda)$.
  \end{enumerate}

  Let $t\ge 1$ be an integer. For every $\lambda\in\Lambda$, let $\bs\Omega_\lambda\subset G(\F_\lambda)^t$ be a conjugacy-invariant subset, such that
  \[\delta_{\bs\Omega}:=\sup_{\lambda\in\Lambda} \frac{|\bs\Omega_\lambda|}{|G(\F_\lambda)|^t}<1.\]
  Then, for any field $\F_q$ of characteristic $p$ and any $L\ge 1$,
  \begin{eqnarray*}
    P \Big(q,(\Fc_\lambda,\bs\Omega_\lambda)_{\lambda\in\Lambda}\Big)&:=&\frac{|\{\bs x\in X(\F_q)^t : (\rho_\lambda(\Frob_{x_i,q}))_i\in\bs\Omega_\lambda\text{ for all }\lambda\in\Lambda\}|}{|X(\F_q)|^t}\\
                                                                  &\ll&\frac{1}{(1-\delta_{\bs \Omega})\delta_\Lambda }\frac{\log{L}}{L}\left(1+\frac{tC \left(L,(\Fc_\lambda)_{\lambda\in\Lambda}\right)^t}{q^{1/2}}\right),
  \end{eqnarray*}
  where
  \begin{enumerate}[(a)]
  \item\label{item:curves} If $d=1$, $C\left(L,(\Fc_\lambda)_{\lambda\in\Lambda}\right) \ll r^{\delta_{G=\SL_r}}L^{\dim G+\frac{\rank G}{2}} \max\limits_{\substack{N(\lambda)\le L}}\cond(\Fc_\lambda)$.
  \item\label{item:modular} If $d\ge 1$, $C\left(L,(\Fc_\lambda)_{\lambda\in\Lambda}\right)\ll dL^{\dim G} \max\limits_{\substack{N(\lambda)\le L}}\max\limits_{M\le N(\lambda)M_G}\sigma_c(X,\Fc_\lambda^{\otimes M})^2$, with $M_G=\rank(G)(\rank(G)+1)/2$.
  \item\label{item:compSystem} If the representations \eqref{eq:rholambda} arise from a compatible system $\rho:\pi_1(X,\overline\eta)\to\GL_r(\prod_{\lambda\in\Lambda}\Oc_\lambda)$, and $X$ has a compactification where it is the complement of a divisor with normal crossing, then
    \[C\left(L,(\Fc_\lambda)_{\lambda\in\Lambda}\right)\ll L^{\dim G+1}r^{\delta_{G=\SL_r}}\left(r+C(X,\rho_{\lambda_0})\right),\]
    where $C(X,\rho_{\lambda_0})$ only depends on $X$ and $\rho_{\lambda_0}$ for an arbitrary fixed $\lambda_0\in\Lambda$.
  \end{enumerate}
\end{theorem}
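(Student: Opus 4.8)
The plan is to run Kowalski's large sieve for Frobenius \cite{KowLS06,KowLargeSieve08,Kow08} with siftable set $X(\F_q)^t$ and sieve support $\{\lambda\in\Lambda:N(\lambda)\le L\}$ (after the routine reduction to at most one prime of $\Oc$ above each rational prime, which changes the density only by a bounded factor); the genuinely new ingredient is the bound on sums of Betti numbers that control the large sieve constant when $X$ is not a curve and the sheaves are wildly ramified, everything else being an adaptation of \cite{KowLS06,Kow08}. First I would recall from there that the large sieve inequality gives
\[P\big(q,(\Fc_\lambda,\bs\Omega_\lambda)_{\lambda\in\Lambda}\big)\ll\frac{\delta_{\bs\Omega}}{1-\delta_{\bs\Omega}}\cdot\frac{\Delta}{H},\]
where $H=\sum_{N(\lambda)\le L}\frac{|G(\F_\lambda)|^t-|\bs\Omega_\lambda|}{|\bs\Omega_\lambda|}\gg(1-\delta_{\bs\Omega})\delta_\Lambda L/\log L$, and where the large sieve constant satisfies, by duality, $\Delta\le 1+\max_{\lambda}\sum_{\lambda'}\sum_{\pi,\pi'}|W(\lambda,\lambda';\pi,\pi')|$ with the ``$1$'' coming from the diagonal terms $\lambda=\lambda'$, $\pi=\pi'$, and
\[W(\lambda,\lambda';\pi,\pi'):=\frac{1}{|X(\F_q)|^t}\sum_{\bs x\in X(\F_q)^t}\chi_\pi\big((\rho_\lambda(\Frob_{x_i,q}))_i\big)\,\overline{\chi_{\pi'}\big((\rho_{\lambda'}(\Frob_{x_i,q}))_i\big)},\]
$\pi$ (resp.\ $\pi'$) running over the nontrivial irreducible complex characters of $G(\F_\lambda)^t$ (resp.\ $G(\F_{\lambda'})^t$). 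In case \ref{item:PG} one first reduces to the setting of \ref{item:notPG} by observing, exactly as in the proof of Theorem \ref{thm:monEvenDir}, that $\zeta_r\in E$ together with $N(\lambda)=1$ forces $\PGL_r(\F_\lambda)=\SL_r(\F_\lambda)$, so the projective monodromy hypothesis becomes an honest $G=\SL_r$ monodromy hypothesis.

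The core is to bound the correlation sums by sums of Betti numbers. Since $X(\F_q)^t$ is a product and the irreducible characters of $G^t$ are external tensor products $\chi_{\pi_1}\boxtimes\cdots\boxtimes\chi_{\pi_t}$, the sum $W(\lambda,\lambda';\pi,\pi')$ factors as $\prod_{i=1}^t\frac{1}{|X(\F_q)|}\sum_{x\in X(\F_q)}\tr\big(\Frob_{x,q}\mid\Fc_\lambda^{(\pi_i)}\otimes\Fc_{\lambda'}^{(\bar\pi'_i)}\big)$, where $\Fc_\lambda^{(\pi)}$ is the lisse sheaf attached to $\pi\circ\rho_\lambda$, realized over $\overline{\Q_{\ell'}}$ for an auxiliary prime $\ell'\ne p$ so that it has finite monodromy, hence is geometrically semisimple and pointwise pure of weight $0$. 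For each coordinate $i$ with $(\pi_i,\pi'_i)\ne(1,1)$, Goursat's lemma applied to the (quasi-)simple groups $\SL_r(\F_\lambda)$, $\Sp_r(\F_\lambda)$ (excluding finitely many small residue characteristics, and using that we kept one prime above each rational prime) shows $\Fc_\lambda^{(\pi_i)}\otimes\Fc_{\lambda'}^{(\bar\pi'_i)}$ has no geometrically trivial constituent when $\lambda\ne\lambda'$, so its $H^{2d}_c$ vanishes and the Grothendieck--Lefschetz trace formula with Deligne's weight bound make that coordinate's factor $\ll\sigma_c\big(X,\Fc_\lambda^{(\pi_i)}\otimes\Fc_{\lambda'}^{(\bar\pi'_i)}\big)q^{-1/2}$; the remaining coordinates contribute $1$, and on the diagonal the factors are $1+O(\sigma_c(\cdot)q^{-1/2})$ by Schur's lemma. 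Since $\pi\ne 1$, at least one factor saves a power of $q^{1/2}$, so $|W|\ll q^{-1/2}\prod_i\max\big(1,\sigma_c(X,\Fc_\lambda^{(\pi_i)}\otimes\Fc_{\lambda'}^{(\bar\pi'_i)})\big)$ for $\lambda\ne\lambda'$. By an effective, modular form of Burnside's theorem (following \cite[Section~4]{KowLS06}, \cite[Theorem~9.2.6]{KatzSarnak91}, \cite[Lemma~5.2]{Katz16}) every $\pi_i$ occurs in $V^{\otimes M}$ for the standard representation $V$ and some $M\le N(\lambda)M_G$, $M_G=\rank(G)(\rank(G)+1)/2$ (the factor $N(\lambda)$ absorbing Frobenius twists in characteristic $\ell$, the dual of $V$ being itself a summand of a bounded tensor power), so $\Fc_\lambda^{(\pi_i)}$ is a subquotient of $\Fc_\lambda^{\otimes M}$; combining this with subadditivity of $\sigma_c$ along short exact sequences, Künneth, the count $\ll\ell^{\rank G}$ for the number of irreducible characters of $G(\F_\ell)$, and $\ell\le L$, the sum over $\pi,\pi'$ and over $\lambda'$ with $N(\lambda')\le L$ collapses to a single quantity $C=C(L,(\Fc_\lambda)_{\lambda\in\Lambda})$, yielding $\Delta\ll 1+tC^tq^{-1/2}$ and hence the asserted bound on $P$.

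It then remains to bound $C$ — that is, the $\sigma_c$'s above — in the three cases. In \ref{item:curves} ($d=1$) the Grothendieck--Ogg--Shafarevich/Euler--Poincaré formula gives $\sigma_c(X,\mathcal{H})\le\cond(\mathcal{H})$ for any lisse $\mathcal{H}$, and subadditivity of Swan conductors under tensor product bounds $\cond(\mathcal{H})\ll_{r,M}\cond(\Fc_\lambda)$ for $\mathcal{H}$ a constituent of $\Fc_\lambda^{\otimes M}$; bookkeeping of ranks and of the number of characters produces $C\ll r^{\delta_{G=\SL_r}}L^{\dim G+\rank G/2}\max_{N(\lambda)\le L}\cond(\Fc_\lambda)$. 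In \ref{item:modular} ($d\ge1$) no such formula is available, so one keeps the tensor powers and bounds the Betti numbers of the tensor-product sheaves in terms of those of the $\Fc_\lambda^{\otimes M}$, $M\le N(\lambda)M_G$, via Künneth and subadditivity, giving $C\ll dL^{\dim G}\max_{N(\lambda)\le L}\max_{M\le N(\lambda)M_G}\sigma_c(X,\Fc_\lambda^{\otimes M})^2$. In \ref{item:compSystem} one uses Sawin's argument: the Swan conductors of the $\Fc_\lambda$ along the components of the normal-crossings boundary divisor are independent of $\lambda$ in the compatible system, so after a suitable fibration/blow-up reduction the Euler characteristic of $\Fc_\lambda^{\otimes M}$ is computed in a tamely ramified situation, where Deligne's bound on Euler--Poincaré characteristics of tame sheaves \cite{Ill81} applies with a constant depending only on $X$ and the fixed $\rho_{\lambda_0}$; this produces the much sharper $C\ll L^{\dim G+1}r^{\delta_{G=\SL_r}}\big(r+C(X,\rho_{\lambda_0})\big)$, at the cost of an ineffective constant.

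The hard part is controlling $\sigma_c(X,\Fc_\lambda^{\otimes M})$, and more generally the Betti numbers of the relevant tensor-product sheaves, for wildly ramified $\Fc_\lambda$ on a higher-dimensional $X$, uniformly in $\lambda$ and $M$: on a curve this is Grothendieck--Ogg--Shafarevich, but on a variety one must either accept the crude square in \ref{item:modular} or carry out Sawin's reduction to the tame case in \ref{item:compSystem}, which is the real technical heart of the theorem. A secondary difficulty is making Burnside's theorem effective in the modular setting — bounding the tensor-power degree $M\le N(\lambda)M_G$, and hence the number and conductors of the characters — so that these quantities enter the final bound only polynomially in $L$ and keep the $t$-dependence of the clean shape $C^t$.
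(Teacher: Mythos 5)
Your proposal follows essentially the same approach as the paper: set up Kowalski's large sieve for Frobenius, factor the correlation sums over the $t$ coordinates and over irreducible characters, then bound the resulting $\sigma_c$'s, with the projective hypothesis \ref{item:PG} reduced to the linear one \ref{item:notPG} via $\PGL_r(\F_\lambda)\cong\SL_r(\F_\lambda)$ (which in the paper is carried out by replacing $G$ by $PG$ throughout and noting $|\eta(\Omega)|/|PG(\F_\lambda)|\le\delta_\Omega$). The structure and the key ideas—modular Burnside, semisimplification/subadditivity of $\sigma_c$, Künneth, Goursat independence for $\lambda\ne\lambda'$—all match the paper's proof.

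One correction to the sketch of case \ref{item:compSystem} (Sawin's argument): the mechanism is not a ``fibration/blow-up reduction,'' and stating that ``Swan conductors are independent of $\lambda$'' is not literally what the paper establishes. What the paper does is pass to the \emph{\'etale covering} $\varphi:Y\to X$ corresponding to $\rho_{\lambda_0}\pmod{\lambda_0}$: on $Y$ the pullback of $\rho_{\lambda_0}$ lands in the pro-$\ell_0$-group $\{g\equiv 1\pmod{\lambda_0}\}$ and hence is tame; then Deligne's compatibility (reducing to curves by purity and invoking \cite[7.5.1]{KatzTwistedL}) propagates tameness to every $\lambda$ in the system on $Y$. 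Only after this replacement $X\rightsquigarrow Y$ (via Hochschild--Serre, $\sigma_c(X,\Fc_{\pi,\pi'})\le\sigma_c(Y,\varphi^*\Fc_{\pi,\pi'})$) is Deligne's Euler--Poincar\'e bound for tame sheaves applicable. A fibration or blow-up would not kill the wild ramification, so the argument as you phrased it would not go through; but the constant you end up with indeed depends only on $X$ and $\rho_{\lambda_0}$, as in the paper. Also, the paper's effective modular Burnside (Proposition \ref{prop:BurnsideLieModular}) is obtained from Steinberg's splitting-field and lifting theorems for $\ell$-restricted highest weights, rather than via an argument about Frobenius twists, though the quantitative bound $M\le\ell M_G$ agrees with yours.
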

\begin{table}
  \centering
  \begin{tabular}{c||c|c|c|c|c}
    $G$&$\dim G$&$\rank G$&$E_G$&Type&Weyl group\\ \hline
    $\SL_r$&$r^2-1$&$r-1$&$\frac{2r^2+r-3}{2}$&$A_{r-1}$&$\Sf_r$\\
    $\Sp_r$&$\frac{r(r+1)}{2}$&$r/2$&$\frac{r(2r+3)}{4}$&$C_{r/2}$&$W_{r}\le\Sf_r$
  \end{tabular}
  \caption{Reminder of certain invariants for the groups considered.}
\end{table}
\begin{remarks}
  \begin{enumerate}
  \item In the case of curves ($d=1$) with $E=\Q$ and assumption \ref{item:notPG}, this is \cite[Theorem 3.1, Proposition 3.3]{KowLS06} (see also \cite[Section 5, Remark 5.4]{Kow08}).
  \item We handle the weaker assumption \ref{item:PG} on projective monodromy groups to treat $L$-function attached to even Dirichlet characters over function fields (Section \ref{sec:variancePPSI}).
  \item The constant $C\left(L,(\Fc_\lambda)_{\lambda\in\Lambda}\right)$ may depend on the characteristic $p$, but crucially not on the index $[\F_q:\F_p]$.
  \item The last part of \cite[Remark 5.2]{KowLS06} does not seem quite correct: one crucially has to control the dependency of $C$ with respect to $L$ (that is, the Betti numbers) if one wants to take $L\to\infty$.
  \end{enumerate}
\end{remarks}
In practice, we will use the following consequence of Theorem \ref{thm:largeSieve}:
\begin{corollary}\label{cor:largeSieve}
  In the setting of Theorem \ref{thm:largeSieve}:
  \begin{enumerate}[(a),leftmargin=*]
  \item\label{item:largeSievea} If $X$ is a curve, then
    \[P \Big(q,(\Fc_\lambda,\bs\Omega_\lambda)_{\lambda\in\Lambda}\Big)\ll \frac{t \sup_{\lambda\in\Lambda}  \cond(\Fc_{\lambda})^t}{(1-\delta_{\bs\Omega})\delta_\Lambda}\frac{\log{q}}{q^{1/(tE_G)}},\]
    where the implied constant is absolute and $E_G=\dim G+(\rank{G})/2$.
  \item\label{item:largeSieveb} If there are constants $B_1>0$ and $B_2> 1$ such that
    \[\sup_{\lambda\in\Lambda}\sigma_c(X,\Fc_\lambda^{\otimes N})\le B_1 B_2^N\text{ for all }N\ge 1,\quad\mathrm{ then}\]
    \[P \Big(q,(\Fc_\lambda,\bs\Omega_\lambda)_{\lambda\in\Lambda}\Big)\ll \frac{t^2(B_1^2dr^{\delta_{G=\SL_r}})^t(\log(B_2)M_G+\dim{G})}{(1-\delta_{\bs\Omega})\delta_\Lambda}\frac{\log\log{q}}{\log{q}},\]
    with an absolute implied constant.
  \item\label{item:largeSievec} If hypothesis \ref{item:compSystem} of Theorem \ref{thm:largeSieve} holds, then
      \[P \Big(q,(\Fc_\lambda,\bs\Omega_\lambda)_{\lambda\in\Lambda}\Big)\ll \frac{t \left(r^{\delta_{G=\SL_r}+1}C(X,\rho_{\lambda_0})\right)^t}{(1-\delta_{\bs\Omega})\delta_\Lambda}\frac{\log{q}}{q^{1/(2t(\dim G+1))}}.\]
  \end{enumerate}
\end{corollary}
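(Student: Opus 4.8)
The plan is to specialize Theorem~\ref{thm:largeSieve} by inserting each of its three bounds \ref{item:curves}, \ref{item:modular}, \ref{item:compSystem} on $C\big(L,(\Fc_\lambda)_{\lambda\in\Lambda}\big)$ into the main inequality
\[
P\Big(q,(\Fc_\lambda,\bs\Omega_\lambda)_{\lambda\in\Lambda}\Big)\ll\frac{1}{(1-\delta_{\bs\Omega})\delta_\Lambda}\,\frac{\log L}{L}\left(1+\frac{tC\big(L,(\Fc_\lambda)_{\lambda\in\Lambda}\big)^t}{q^{1/2}}\right),
\]
and then to choose $L=L(q)$ so as to balance the two terms inside the parenthesis, i.e.\ to take $L$ as large as possible subject to $tC^t/q^{1/2}\ll 1$, after which the bound reduces to a constant multiple of $(1-\delta_{\bs\Omega})^{-1}\delta_\Lambda^{-1}\,\tfrac{\log L}{L}$.

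First I would treat parts \ref{item:largeSievea} and \ref{item:largeSievec}, where the relevant bound on $C$ is \emph{polynomial} in $L$. In the curve case \ref{item:curves} one has $C\ll r^{\delta_{G=\SL_r}}L^{E_G}\sup_{\lambda}\cond(\Fc_\lambda)$, and under hypothesis \ref{item:compSystem} one has $C\ll r^{\delta_{G=\SL_r}}L^{\dim G+1}\big(r+C(X,\rho_{\lambda_0})\big)$. Writing $C\ll A\,L^{\kappa}$ with $A$ independent of $L$ and $\kappa$ equal to $E_G$, respectively $\dim G+1$, one picks $L$ an appropriate power of $q$ (so that $A^tL^{t\kappa}\asymp q^{1/2}$); this forces $tC^t/q^{1/2}\ll 1$, makes $\tfrac{\log L}{L}\asymp \tfrac{\log q}{q^{1/(tE_G)}}$ (resp.\ $\tfrac{\log q}{q^{1/(2t(\dim G+1))}}$) up to a factor depending only on $t$ and $\kappa$, and the leftover factor $t A^t$ produces the $t\sup_\lambda\cond(\Fc_\lambda)^t$ (resp.\ $t\big(r^{\delta_{G=\SL_r}+1}C(X,\rho_{\lambda_0})\big)^t$) appearing in the statement, once the tame/compatible-system constants are absorbed.

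For part \ref{item:largeSieveb} the essential difference is that the bound \ref{item:modular} on $C$ is \emph{exponential} in $L$: feeding $\sigma_c(X,\Fc_\lambda^{\otimes M})\le B_1B_2^{M}$ (valid for all $M\le N(\lambda)M_G\le LM_G$ since $N(\lambda)\le L$) into part \ref{item:modular} gives $C\ll d\,B_1^2\,L^{\dim G}B_2^{2LM_G}$. Hence $tC^t/q^{1/2}\ll 1$ can only be ensured for $L$ of size $\asymp \frac{\log q}{t(M_G\log B_2+\dim G)}$ (obtained by solving $t\log(dB_1^2)+t\dim G\log L+2tLM_G\log B_2\le\tfrac12\log q$), which yields the weaker saving $\tfrac{\log L}{L}\asymp\frac{t(M_G\log B_2+\dim G)\log\log q}{\log q}$; multiplying by the remaining $t(B_1^2d\,r^{\delta_{G=\SL_r}})^t$ coming from the estimate gives the claimed bound.

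I expect the only real work to be bookkeeping: tracking how the constants $t$, $r$, $B_1$, $B_2$, $M_G$, $\dim G$ (and $\cond(\Fc_\lambda)$ in \ref{item:largeSievea}, $C(X,\rho_{\lambda_0})$ in \ref{item:largeSievec}) propagate through the choice of $L$, checking that the chosen $L$ is $\ge 1$ for $q$ large and harmless for $q$ bounded, and verifying compatibility with the hypotheses $\delta_\Lambda>0$ and $\delta_{\bs\Omega}<1$ of Theorem~\ref{thm:largeSieve}. No ingredient beyond Theorem~\ref{thm:largeSieve} itself is required.
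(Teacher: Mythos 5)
Your approach — insert each of the three estimates on $C\big(L,(\Fc_\lambda)_\lambda\big)$ into the master inequality of Theorem~\ref{thm:largeSieve} and choose $L$ as large as possible subject to $tC^t/q^{1/2}\ll 1$ — is correct and is clearly the intended argument, since the paper states the corollary with no further proof.

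However, your bookkeeping in part \ref{item:largeSievea} is internally inconsistent, and you should pause over it. You set $L$ so that $A^tL^{t\kappa}\asymp q^{1/2}$ with $\kappa=E_G$, which forces $L\asymp q^{1/(2t E_G)}$ (up to factors in $A,t$), and hence $\tfrac{\log L}{L}\asymp\tfrac{\log q}{q^{1/(2tE_G)}}$ — \emph{not} $\tfrac{\log q}{q^{1/(tE_G)}}$ as you wrote. You carry out the identical balancing in part \ref{item:largeSievec} with $\kappa=\dim G+1$ and there you \emph{do} keep the factor of $2$, obtaining $q^{1/(2t(\dim G+1))}$ as claimed. In other words, the exponent you report for \ref{item:largeSievea} does not follow from the balancing you perform with the bound $C\ll r^{\delta_{G=\SL_r}}L^{E_G}\sup_\lambda\cond(\Fc_\lambda)$ of Theorem~\ref{thm:largeSieve}\ref{item:curves}; you appear to have transcribed the exponent from the statement rather than recomputing it. You should flag this discrepancy explicitly (it propagates to Theorem~\ref{thm:linIndepExp}) rather than silently matching the display.

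In part \ref{item:largeSieveb}, the solve for $L$ is slightly hand-wavy: because $C$ is dominated by the term $B_2^{2LM_G}$, the binding constraint really is $2tLM_G\log B_2\le\tfrac12\log q - t\log(dB_1^2)-t\dim G\log L$, and the additional prefactor $(B_1^2dr^{\delta_{G=\SL_r}})^t$ in the stated bound is slack inserted so that the estimate remains trivially true when $\log q$ is not large enough for the chosen $L$ to be at least $2$. That caveat (the need to handle small $q$, or equivalently $L<2$) applies to all three parts and is part of the bookkeeping you defer; it should be addressed to justify the absolute implied constants.
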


We prove the theorem and its corollary in the next sections.

\subsection{Preliminaries to the proof of Theorem \ref{thm:largeSieve}\ref{item:modular}}

\subsubsection{Irreducibles in tensor powers of faithful representations}

A classical theorem of Burnside asserts that
\begin{quote}
  if $G$ is a finite group with a faithful (complex) representation $\rho$, then any irreducible representation of $G$ appears as a direct summand of $\rho^{\otimes M}$ for some integer $M\ge 1$ 
\end{quote}
(see e.g. \cite{Ste62,Brau64,BryKov72}). The same result holds for compact groups, and is the key to get bounds on Betti numbers in \cite{Katz16}. For classical groups, this can actually directly be seen from Weyl's constructions of the irreducible modules.

A key input to the proof of Theorem \ref{thm:largeSieve}\ref{item:modular} is the following modular version of Burnside's result, for classical finite groups in defining characteristic.
\begin{proposition}\label{prop:BurnsideLieModular}
  Let $k$ be a field of characteristic $\ell$ and $G=\SL_n(\F_\ell)$ or $\Sp_n(\F_\ell)$ with its standard $k$-representation $\Std: G\to\GL_n(k)$. Any irreducible $k$-representation of $G$ appears as a composition factor\footnote{We need to look at composition factors instead of summands, since we consider modular representations, which are not completely reducible.} of $\Std^{\otimes M}$ for some $M\le \ell M_G$, $M_G=\rank(G)(\rank(G)+1)/2$. Therefore, for any $k$-representation $\pi$ of $G$, the semisimplification $\pi^\semisimple$ appears as a direct summand of $(\dim\pi)(\Std^{\otimes M})^\semisimple$.
\end{proposition}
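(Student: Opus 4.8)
The goal is a modular analogue of Burnside's theorem: every irreducible $k$-representation of $G=\SL_n(\F_\ell)$ or $\Sp_n(\F_\ell)$ occurs as a composition factor of some tensor power $\Std^{\otimes M}$ with $M$ controlled by $\ell M_G$. The natural tool is the Steinberg tensor product theorem together with the parametrization of irreducible $k\overline{\F_\ell}$-modules by $\ell$-restricted highest weights. First I would reduce to $k=\overline{\F_\ell}$: a composition factor over $\overline{\F_\ell}$ descends (after a Galois-orbit argument) to a composition factor over $\F_\ell$, and conversely a $k$-irreducible stays so under extension of scalars only up to semisimplification, so one should phrase everything in terms of composition factors of $\Std^{\otimes M}\otimes_k\overline{\F_\ell}$ and note this is harmless. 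Then the irreducible modules of the finite group $G$ in defining characteristic are, by Steinberg, exactly the $L(\lambda)$ for $\lambda$ an $\ell$-restricted dominant weight (i.e. $\langle\lambda,\alpha^\vee\rangle<\ell$ for all simple coroots $\alpha$), these being restrictions of the simple modules $L(\lambda)$ of the ambient algebraic group $\mathbf{G}\in\{\SL_n,\Sp_n\}$.

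**Key steps.** (1) Show that for the algebraic group $\mathbf{G}$, every simple module $L(\lambda)$ with $\lambda$ a dominant weight appears as a composition factor of $V^{\otimes m}$ where $V=\Std$ is the natural module and $m=\sum_i a_i$ when $\lambda=\sum a_i\varpi_i$ in fundamental weights — more precisely, of the tensor power corresponding to the total number of boxes in the Young-diagram/weight description. The cleanest route: the Weyl module $\Delta(\varpi_i)$ (for $\SL_n$, $\Lambda^i V$; for $\Sp_n$, the fundamental symplectic module) is a quotient or subquotient of $V^{\otimes i}$, hence $L(\varpi_i)$ is a composition factor of $V^{\otimes i}$; and $L(\lambda)$ is a composition factor of $\bigotimes_i \Delta(\varpi_i)^{\otimes a_i}$ because the tensor product of Weyl modules has a highest weight $\lambda$ occurring with multiplicity one and a Weyl-module filtration (good filtration), forcing $L(\lambda)$ among its factors. (2) For the $\ell$-restricted weights that parametrize $\mathrm{Irr}_k(G)$, bound $\sum_i a_i$. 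Since $a_i=\langle\lambda,\alpha_i^\vee\rangle\le\ell-1$ for restricted $\lambda$, we get $\sum_i a_i\le (\ell-1)\cdot\mathrm{rank}(\mathbf G)$ — but this is weaker than $\ell M_G$ with $M_G=\mathrm{rank}(\mathrm{rank}+1)/2$; in fact $(\ell-1)\,\mathrm{rank}\le \ell M_G$ comfortably, so this step is fine, and one could even sharpen using the fact that the relevant tensor power for $L(\varpi_i)$ on $\Sp_n$ is $\le i$ and summing $i\cdot a_i$ over $i$ gives the triangular number bound; I would present the $\sum i\,a_i\le \ell M_G$ version to match the statement exactly. (3) Pad if necessary: if $M=\sum_i i a_i<\ell M_G$ one must absorb the discrepancy; since $V^{\otimes M}$ is a summand (up to semisimplification) of $V^{\otimes(M+j)}$ is \emph{not} literally true, but $L(\lambda)$ being a composition factor of $V^{\otimes M}$ implies $L(\lambda)\otimes L(\mu)$-type factors appear in $V^{\otimes(M+j)}$ — to get $L(\lambda)$ itself for a \emph{fixed} larger exponent one uses that the trivial module $\mathbf{1}$ is a composition factor of $V^{\otimes n}$ (via $\Lambda^n V$ or the symplectic form), so $V^{\otimes M}\hookrightarrow$-subquotient-of $V^{\otimes(M+n)}$ after tensoring with a copy containing $\mathbf 1$; iterating adjusts the exponent up to any value $\ge M$ of the right residue, and one checks $\ell M_G$ is reachable. (Alternatively, and more cleanly, restdate the proposition as "for \emph{some} $M\le\ell M_G$", which is literally what is written, so no padding to an exact value is needed.) (4) Deduce the last sentence: for any $k$-representation $\pi$, the composition factors of $\pi$ are among the $L(\lambda)$, each of which is a composition factor of some $(\Std^{\otimes M})^{\mathrm{ss}}$ with $M\le\ell M_G$; collecting, $\pi^{\mathrm{ss}}$ is a summand of $\bigoplus_{M\le\ell M_G}(\dim\pi)(\Std^{\otimes M})^{\mathrm{ss}}$, and since each $\Std^{\otimes M}$ for $M<\ell M_G$ embeds as a summand of $(\Std^{\otimes \ell M_G})^{\mathrm{ss}}$ after tensoring with a factor containing $\mathbf 1$ (step 3), one reaches the stated form with the single exponent $M=\ell M_G$; if padding is not desired, phrase as a sum over $M\le\ell M_G$.

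**Main obstacle.** The delicate point is step (1)–(2): controlling which simple modules of the \emph{finite} group $G(\F_\ell)$ appear, versus those of the algebraic group, and getting the tensor-power bound to land at $\ell M_G$ rather than something like $(\ell-1)n$. The Steinberg tensor product theorem tells us $\mathrm{Irr}_k(G)$ corresponds exactly to $\ell$-restricted weights (using $\ell>$ relevant bound, which holds here since the monodromy hypotheses come with $\ell$ large / $\ell\nmid$ small quantities, and $\ell$ is a "good prime" for $A_{n-1}$ and $C_{n/2}$ when $\ell>2$, indeed $\ell>5$), so no Frobenius twists are needed and the restricted range is genuinely the whole story — this is the crucial simplification that makes the bound linear in $\ell$. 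The other subtlety is that Weyl modules need not be irreducible in characteristic $\ell$, so one must argue with composition factors and good filtrations (Donkin, Jantzen) rather than with direct summands throughout; I would cite Jantzen's book (Representations of Algebraic Groups) for: the good filtration of $\Delta(\mu_1)\otimes\Delta(\mu_2)$, the multiplicity-one appearance of the highest weight, and the Steinberg tensor product theorem. I expect the write-up of this step to be where essentially all the content sits; the descent from $\overline{\F_\ell}$ to $\F_\ell$ and the final bookkeeping in step (4) are routine.
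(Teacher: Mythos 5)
Your proposal follows essentially the same route as the paper: Steinberg's classification of irreducible modular representations via $\ell$-restricted highest weights, identification of the fundamental-weight modules inside $\Std^{\otimes i}$ through the exterior-power (resp.\ truncated exterior-power for $\Sp$) constructions, and the triangular-number bound $\sum_i i\,a_i \le (\ell-1)\,\mathrm{rank}(\mathrm{rank}+1)/2 \le \ell M_G$. The one place where the paper is tidier is at the outset: rather than running a Galois-descent/extension-of-scalars discussion between $k$, $\F_\ell$, and $\overline{\F_\ell}$, the paper invokes Steinberg's 1968 result that $\F_\ell$ is already a splitting field for $G$, so every irreducible $k$-representation is absolutely irreducible and the base-change issue disappears immediately. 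Similarly, you reach for good-filtration machinery (Donkin/Jantzen) to show $L(\lambda)$ occurs in $\bigotimes_i \Delta(\varpi_i)^{\otimes a_i}$, whereas the paper needs only the elementary observation that the highest weight $\lambda$ occurs with multiplicity one in the tensor product, which already forces $L(\lambda)$ to be a composition factor; the filtration language is an overkill here. You also correctly note that padding to a fixed exponent is unnecessary given the ``for some $M \le \ell M_G$'' phrasing, which is how the paper both states and uses the result (via a $\max$ over $M \le \ell M_G$ in the Betti-number bound). So the mathematics is the same; the paper just avoids the two detours you flag as potential obstacles.
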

\begin{proof}
  Since $G$ is defined over $\F_\ell$, any irreducible $k$-representation of $G$ is absolutely irreducible, because $\F_\ell$ is the splitting field of $G$ by a 1968 result of Steinberg \cite[Section 5.2]{Hum06}.

  By a 1963 lifting theorem of Steinberg (see \cite[Section 2.11]{Hum06}), the absolutely irreducible representations of $G$ in characteristic $\ell$ are given by the modules $L(\lambda)$ with $\lambda$ an $\ell$-restricted highest weight, i.e. $0\le \langle\lambda,\alpha^\vee\rangle<q$ for all $\alpha\in\Delta$. For $\omega_i$ ($1\le i\le \rank(G)$) the fundamental dominant weights, that means that $\lambda=\sum_{i=1}^{\rank(G)}a_i\omega_i$ with $0\le a_i<\ell$.

  In Bourbaki numbering \cite[Tables]{BourLie05}, $\omega_i$ is $\Lambda^i(\Std)$ (see ibid, VIII.13.1.IV) (resp. $\ker(\Lambda^i(\Std)\to\Lambda^{i-2}(\Std))$; see ibid, VIII.13.3.IV) for $\SL_n$ (resp. $\Sp_n$). These are simple quotients or subrepresentations of $\Std^{\otimes i}$, so they appear in the composition series.
\end{proof}
\begin{remark}\label{rem:BurnsideImprove}
  For complex representations, combining David Speyer's proof of Burnside's theorem in \cite{MOSpe11} with character bounds \cite{Glu93} shows that $M\ll \dim G$ is enough, as $\ell\to\infty$. Such an improvement (or even $M\ll\log|\F_\ell|$) to Proposition \ref{prop:BurnsideLieModular} would lead to bounds of the quality of Corollary \ref{cor:largeSieve}\ref{item:largeSievec} in Corollary \ref{cor:largeSieve}\ref{item:largeSieveb}. However, while Brauer characters control composition factors, they do not satisfy (in defining characteristic) good bounds, to extend this characteristic $0$ idea.
\end{remark}

\subsubsection{Betti numbers of reductions modulo $\lambda$ and semisimplifications}
\begin{lemma}\label{lemma:BettiRed}
  In the setting of Theorem \ref{thm:largeSieve}, if $\Fc_\lambda$ is the sheaf of $\F_\lambda$-modules on $X$ obtained by reduction of a lisse sheaf of $\Oc_\lambda$-modules $\hat\Fc_\lambda$ on $X$, then
  \[\sigma_c(X,\hat\Fc_\lambda^{\otimes M})\le \sigma_c(X,\Fc_\lambda^{\otimes M})\le 2\sigma_c(X,\hat\Fc_\lambda^{\otimes M})\]
  for any $M\ge 1$.
\end{lemma}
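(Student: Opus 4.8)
The plan is to run the ``Bockstein'' comparison between $\Oc_\lambda$- and $\F_\lambda$-coefficients. Fix a uniformizer $\varpi$ of the discrete valuation ring $\Oc_\lambda$. As $\hat\Fc_\lambda$ is lisse of \emph{free} $\Oc_\lambda$-modules, so is $\hat\Fc_\lambda^{\otimes M}$, multiplication by $\varpi$ on it is injective, and its reduction modulo $\lambda$ is $\Fc_\lambda^{\otimes M}$ (forming tensor powers commutes with the base change $\Oc_\lambda\to\F_\lambda$), so there is a short exact sequence of lisse sheaves $0\to\hat\Fc_\lambda^{\otimes M}\xrightarrow{\cdot\varpi}\hat\Fc_\lambda^{\otimes M}\to\Fc_\lambda^{\otimes M}\to 0$ on $X$. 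Passing to the long exact sequence in compactly supported cohomology and using that every $H^i_c(X,\hat\Fc_\lambda^{\otimes M})$ is a finitely generated $\Oc_\lambda$-module, one gets, for each $i$, a short exact sequence of $\F_\lambda$-vector spaces
\[0\to H^i_c(X,\hat\Fc_\lambda^{\otimes M})\otimes_{\Oc_\lambda}\F_\lambda\to H^i_c(X,\Fc_\lambda^{\otimes M})\to H^{i+1}_c(X,\hat\Fc_\lambda^{\otimes M})[\varpi]\to 0,\]
whose outer terms are the cokernel, resp.\ kernel, of multiplication by $\varpi$ (and $N[\varpi]=\operatorname{Tor}_1^{\Oc_\lambda}(N,\F_\lambda)$).

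Now the structure theorem over the DVR $\Oc_\lambda$ gives $\dim_{\F_\lambda}(N\otimes_{\Oc_\lambda}\F_\lambda)=\rank_{\Oc_\lambda}N+\dim_{\F_\lambda}N[\varpi]$ for any finitely generated $N$. Summing the displayed sequences over $i$ and reindexing the torsion terms (which vanish outside $0\le i\le 2\dim X$) yields the exact identity
\[\sigma_c(X,\Fc_\lambda^{\otimes M})=\sigma_c(X,\hat\Fc_\lambda^{\otimes M})+2T,\qquad T:=\sum_i\dim_{\F_\lambda}H^i_c(X,\hat\Fc_\lambda^{\otimes M})[\varpi].\]
The left-hand inequality of the lemma is then immediate (since $T\ge 0$), and the right-hand one is equivalent to the bound $T\le\tfrac12\sigma_c(X,\hat\Fc_\lambda^{\otimes M})$ on the total torsion of the $\Oc_\lambda$-cohomology.

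Everything so far is formal homological algebra over $\Oc_\lambda$; the one place requiring genuine input is this last torsion estimate, and it is where I expect the real work to lie. I would approach it via Poincaré--Verdier duality over $\Oc_\lambda$ together with Artin vanishing: the latter confines $H^\bullet_c(X,-)$ to degrees $[d,2d]$ ($d=\dim X$) and $H^\bullet(X,-)$ to degrees $[0,d]$, and duality identifies the torsion of $H^i_c(X,\hat\Fc_\lambda^{\otimes M})$ with that of $H^{2d-i+1}(X,(\hat\Fc_\lambda^{\otimes M})^\vee(d))$, which (by Artin vanishing and the freeness of $H^0$) is concentrated in degrees $[1,d]$; one then has to weigh this against the ranks. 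Alternatively — and this already suffices for the application to the large sieve — one may settle for the cruder $\dim_{\F_\lambda}N[\varpi]\le\dim_{\F_\lambda}(N\otimes\F_\lambda)$, which gives $\sigma_c(X,\Fc_\lambda^{\otimes M})\le 2\sum_i\dim_{\F_\lambda}\!\big(H^i_c(X,\hat\Fc_\lambda^{\otimes M})\otimes\F_\lambda\big)$, the right-hand side coinciding with $2\sigma_c(X,\hat\Fc_\lambda^{\otimes M})$ as soon as the relevant $\Oc_\lambda$-cohomology is torsion-free — which holds for $\lambda$ outside a density-zero set, already discarded in Theorem \ref{thm:KatzLP}.
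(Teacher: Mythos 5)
Your argument is the same Bockstein/universal-coefficients route the paper takes, and you extract the identity $\sigma_c(X,\Fc_\lambda^{\otimes M})=\sigma_c(X,\hat\Fc_\lambda^{\otimes M})+2T$ correctly. You are also right to be uneasy about the upper bound, and in fact your unease applies equally to the paper's own proof: read against Definition \ref{def:Betti}, where $\sigma_c$ of an $\Oc_\lambda$-sheaf sums the \emph{free} ranks, the final display in the paper's proof establishes $\sigma_c(X,\Fc_\lambda^{\otimes M})\le 2\sum_i\dim_{\F_\lambda}\!\big(H^i_c(X,\hat\Fc_\lambda^{\otimes M})\otimes\F_\lambda\big)$ — exactly your ``cruder'' bound — and the identification of that right-hand side with $2\sigma_c(X,\hat\Fc_\lambda^{\otimes M})$ is valid only when the $\Oc_\lambda$-cohomology is torsion-free. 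So the two proofs have the same shape and tacitly the same gap, which you have made explicit.

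Neither of your two suggested patches quite closes it, though. Poincaré--Verdier duality combined with Artin vanishing constrains \emph{in which degrees} torsion lives, but provides no comparison between its size and the free rank; a rank-one $\Oc_\lambda$-local system on $\Pb^1\smallsetminus\{0,1,\infty\}$ whose monodromy is $\equiv 1\pmod\lambda$ but nontrivial over $\Oc_\lambda$ has $H^2_c$ entirely torsion, so $T>\tfrac12\sigma_c(X,\hat\Fc_\lambda)$, and such behaviour is excluded here only by the big-monodromy hypotheses of Theorem \ref{thm:largeSieve}, which your duality argument never invokes. As for discarding bad $\lambda$: Theorem \ref{thm:KatzLP} removes a density-zero set on grounds of integral monodromy, not torsion-freeness of $H^\bullet_c(X,\hat\Fc_\lambda^{\otimes M})$, and since the range $M\le N(\lambda)M_G$ of tensor powers appearing in Theorem \ref{thm:largeSieve}\ref{item:modular} grows with $\lambda$, removing a finite bad set of $\lambda$ for each fixed $M$ does not assemble into a single density-one set. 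The cleanest repair — which leaves all downstream estimates intact — is to define $\sigma_c$ of an $\Oc_\lambda$-sheaf as $\sum_i\dim_{\F_\lambda}(H^i_c\otimes_{\Oc_\lambda}\F_\lambda)$ rather than $\sum_i\rank_{\Oc_\lambda}H^i_c$, at which point your Bockstein computation already yields the lemma with no further input.
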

\begin{proof}
  Let $\Gc=\Fc_\lambda^{\otimes M}$ and $\hat\Gc_\lambda=\hat\Fc_\lambda^{\otimes M}$. The lower bound appears in \cite[p. 279]{KatzSarnak91}, and the same argument yields the upper bound: we have the universal coefficients short exact sequence
  \[0\rightarrow H^i_c(X,\hat\Gc_\lambda)\otimes_{\Oc_\lambda}\F_\lambda \rightarrow H^i_c(X,\Gc_\lambda)\otimes_{\Oc_\lambda}\F_\lambda \rightarrow H^{i+1}_c(X,\hat\Gc_\lambda)[\lambda]\rightarrow 0,\]
  obtained after truncating the long exact sequence in cohomology \cite[1.6.5]{DelEC} associated to the short exact sequence $0\rightarrow \hat\Fc_\lambda\xrightarrow{\cdot \lambda}\hat\Fc_\lambda\rightarrow\Fc_\lambda\rightarrow 0$ . Taking dimensions, this implies that
  \[\sigma_c(X,\hat\Gc_\lambda)\le \sigma_c(X,\Gc_\lambda)\le \sum_{i\ge 0}\left(\dim H^i_c(X,\hat \Gc_\lambda) + \dim H^{i+1}_c(X,\hat\Gc_\lambda)\right).\]
\end{proof}
\begin{remark}
  If the sheaves $\Fc_\lambda$ in Theorem \ref{thm:largeSieve} are obtained by reduction of sheaves of $\Oc_\lambda$-modules $\hat\Fc_\lambda$, Lemma \ref{lemma:BettiRed} shows that it suffices to check hypothesis in \ref{item:largeSieveb} of Corollary \ref{cor:largeSieve} for $\Fc_\lambda$, up to replacing $B_1$ by $2B_1$.
\end{remark}
To deal with non-completely reducible representations, we observe the following:
\begin{lemma}\label{lemma:semisimple}
  Let $\Fc$ be a sheaf of $\F_\ell$-modules on $X$ with composition series
  \[0=\Fc_0\subset\dots\subset\Fc_n=\Fc, \quad \Gc_i:=\Fc_{i+1}/\Fc_i\text{ simple} \quad (0\le i\le n-1).\]
  Then $\sigma_c(X,\Fc^\semisimple)=\sum_{i=0}^{n-1}\sigma_c(X,\Gc_i)=\sigma_c(X,\Fc)$.
\end{lemma}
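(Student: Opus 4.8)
The statement is an additivity property of sums of Betti numbers along a composition series, so the plan is to reduce to the case of a short exact sequence and then iterate. First I would recall that for a short exact sequence of lisse sheaves $0\to\Fc'\to\Fc\to\Fc''\to0$ on $X$, the long exact sequence in compactly supported cohomology (\cite[1.6.5]{DelEC}) gives, after taking dimensions over the relevant ring of fractions, the inequality $\sigma_c(X,\Fc)\le\sigma_c(X,\Fc')+\sigma_c(X,\Fc'')$; conversely, since $\F_\ell$ is a field, the sequence splits at the level of stalks (indeed $\Fc''$ is locally free), and one can use the additivity of the Euler--Poincaré characteristic together with semicontinuity to get the reverse. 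The cleanest route, however, is to bypass the inequalities: over a field, the long exact sequence and the rank-nullity theorem applied repeatedly show that $\chi_c(X,\Fc)=\chi_c(X,\Fc')+\chi_c(X,\Fc'')$ and that the \emph{total} dimension $\sigma_c$ is additive as well, because each connecting map either vanishes or contributes symmetrically to the two ends it links — but to make that precise one must be slightly careful, so I expect to fall back on the following.

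\textbf{Key steps.} The main observation is that $\Fc$ and $\Fc^\semisimple=\bigoplus_{i}\Gc_i$ have the \emph{same} compactly supported cohomology in each degree up to semisimplification of the Galois action, hence the same Betti numbers. Concretely: (1) For a short exact sequence $0\to\Fc'\to\Fc\to\Fc''\to0$ with $\Fc',\Fc''$ lisse, one has $\sigma_c(X,\Fc)=\sigma_c(X,\Fc')+\sigma_c(X,\Fc'')$. This is because $H^i_c$ is an exact $\delta$-functor, so the long exact sequence
\[\cdots\to H^i_c(X,\Fc')\to H^i_c(X,\Fc)\to H^i_c(X,\Fc'')\xrightarrow{\partial^i} H^{i+1}_c(X,\Fc')\to\cdots\]
is exact; splitting it into short exact pieces via the images of the $\partial^i$ and adding dimensions, the contributions of $\ker\partial^i$ and $\operatorname{im}\partial^i$ cancel in pairs, leaving exactly $\sum_i\dim H^i_c(X,\Fc')+\sum_i\dim H^i_c(X,\Fc'')$. (2) Apply (1) inductively to the composition series $0=\Fc_0\subset\cdots\subset\Fc_n=\Fc$: writing the short exact sequence $0\to\Fc_i\to\Fc_{i+1}\to\Gc_i\to0$ and using that $\sigma_c(X,\Fc_{i+1})=\sigma_c(X,\Fc_i)+\sigma_c(X,\Gc_i)$, a straightforward induction gives $\sigma_c(X,\Fc)=\sum_{i=0}^{n-1}\sigma_c(X,\Gc_i)$. (3) Since $\Fc^\semisimple$ is by definition $\bigoplus_{i=0}^{n-1}\Gc_i$ and $H^\bullet_c$ commutes with finite direct sums, $\sigma_c(X,\Fc^\semisimple)=\sum_{i=0}^{n-1}\sigma_c(X,\Gc_i)$, which equals $\sigma_c(X,\Fc)$ by (2). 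This chain of equalities is exactly the asserted statement.

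\textbf{Main obstacle.} The only genuinely nontrivial point is step (1), and specifically the claim that the ``rank'' in Definition \ref{def:Betti} — dimension over the total ring of fractions — behaves additively in short exact sequences when the coefficient ring $R$ is $\F_\lambda$ (a field, so this is immediate) or a product $\F_\lambda\otimes\F_{\lambda'}$ or $\Oc_\lambda\otimes\Oc_{\lambda'}$ (where one must pass to the total quotient ring and check that localization is exact, so that the long exact sequence remains exact after $\otimes\operatorname{Frac}(R)$ and dimension counting still works componentwise). For $\F_\ell$-coefficients, which is the case actually used to feed into the large sieve, $R$ is a field and there is nothing to worry about: exactness of the long sequence and additivity of dimension over a field suffice, and the pairing-cancellation argument in (1) is completely standard. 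I would therefore write the proof for the field case in full and remark that the product-ring cases follow by the same argument after inverting all non-zero-divisors, since localization is exact.
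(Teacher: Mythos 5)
Your approach mirrors the paper's: induct along the composition series and apply the long exact sequence in compactly supported cohomology to each short exact sequence $0\to\Fc_i\to\Fc_{i+1}\to\Gc_i\to 0$. However, the claimed equality $\sigma_c(X,\Fc_{i+1})=\sigma_c(X,\Gc_i)+\sigma_c(X,\Fc_i)$ --- which the paper's own proof also asserts --- does not follow from the long exact sequence in general. Splitting
\[\cdots\to H^a_c(X,\Fc_i)\to H^a_c(X,\Fc_{i+1})\to H^a_c(X,\Gc_i)\xrightarrow{\partial^a} H^{a+1}_c(X,\Fc_i)\to\cdots\]
into short exact pieces and adding dimensions gives
\[\sigma_c(X,\Fc_i)+\sigma_c(X,\Gc_i)-\sigma_c(X,\Fc_{i+1})=2\sum_{a\ge 0}\dim\operatorname{im}(\partial^a)\ge 0,\]
which vanishes only if every connecting map vanishes. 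Your ``cancellation in pairs'' is correct for the \emph{alternating} sum (the Euler characteristic is additive across a long exact sequence) but not for the total sum $\sigma_c$. For a concrete failure, take $X=\G_m$ over $\overline\F_p$ and $\Fc$ a nonsplit self-extension of the constant sheaf $\F_\ell$ with nonzero extension class $e\in H^1(X,\F_\ell)$: the connecting map $\partial^1\colon H^1_c(X,\F_\ell)\to H^2_c(X,\F_\ell)$ is cup product with $e$, an isomorphism by Poincar\'e duality, so $\sigma_c(X,\Fc)=2$ while $\sigma_c(X,\Fc^\semisimple)=\sigma_c(X,\F_\ell\oplus\F_\ell)=4$.

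The correct and salvageable form of the lemma is the inequality $\sigma_c(X,\Fc)\le\sigma_c(X,\Fc^\semisimple)=\sum_i\sigma_c(X,\Gc_i)$, and this upper bound is all that is actually used downstream in the proof of Theorem~\ref{thm:largeSieve}\ref{item:modular}. The second equality is safe since $H^\bullet_c$ commutes with finite direct sums, and the inequality does follow from the long exact sequence by the displayed identity. Your ``Main obstacle'' paragraph worries about non-field coefficient rings, but that is not where the difficulty lies (the lemma is stated over the field $\F_\ell$); the genuine gap is the nonvanishing of connecting maps, which you (and the paper) pass over in step (1).
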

\begin{proof}
    For all $0\le i\le n-1$, we have a short exact sequence $0\to\Fc_i\to\Fc_{i+1}\to\Gc_i\to 0$, which gives for all $a\ge 0$ a long exact sequence in cohomology
    \[\dots\to H_c^a(X,\Fc_i)\to H_c^a(X,\Fc_{i+1})\to H_c^a(X,\Gc_i)\to H^{a+1}_c(X,\Fc_i)\to\dots\]
    that yields $\sigma_c(X,\Fc_{i+1})=\sigma_c(X,\Gc_i)+\sigma_c(X,\Fc_{i})$, whence $\sigma_c(X,\Fc)=\sigma_c(X,\Fc_n)=\sum_{i=0}^{n-1}\sigma_c(X,\Gc_i)=\sigma_c(X,\Fc^\semisimple)$.
  \end{proof}
  \subsection{Proof of Theorem \ref{thm:largeSieve}}
  We first give the proof under assumption \ref{item:notPG}, before indicating the changes required in the projective case (assumption \ref{item:PG}).\\
  
  For $\lambda,\lambda'\in\Lambda$, we will denote by $\ell,\ell'$ the primes above which they respectively lie. Since $\Lambda\subset\Spec_{1,p}(\Oc)$, note that $\F_\lambda=\F_\ell$, $\F_{\lambda'}=\F_{\ell'}$. We also let $\widehat{G(\F_\ell)}$ be the set of irreducible (complex) representations of $G(\F_\ell)$.

  For every $\lambda\in\Lambda$, we consider the lisse sheaf $\Gc_\lambda=\Fc_\lambda^{\boxtimes t}$ on $X^t$. By \cite[Lemma 5.1]{Kow08}, the natural map $\pi_1(X^t,(\overline\eta,\dots,\overline\eta))\to\pi_1(X,\overline\eta)^t$ is surjective, so that the arithmetic and geometric monodromy groups of $\Gc_\lambda$ are equal and conjugate to $G(\F_\lambda)^t$.
  
  Exactly as in \cite[Theorem 3.1, Proposition 3.3, Section 5]{KowLS06}, we get that
  \begin{eqnarray*}
    P \Big(q,(\Fc_\lambda,\bs\Omega_\lambda)_{\lambda\in\Lambda}\Big)&\ll&\Delta \left[\sum_{\substack{\lambda\in\Lambda \\ N(\lambda)\le L}}\left(1-\frac{|\bs\Omega_\lambda|}{|G(\F_\lambda)|}\right)\right]^{-1}\ll\frac{\Delta\log{L}}{\delta_\Lambda(1-\delta_{\bs\Omega}) L},
  \end{eqnarray*}
  where $\Delta\ll 1+q^{-1/2}\tilde C(L,(\Fc_\lambda)_{\lambda\in\Lambda})$, and $\tilde C(L,(\Fc_\lambda)_{\lambda\in\Lambda})$ is defined by
  \[\max_{\substack{\lambda\in\Lambda \\ N(\lambda)\le L}}\max_{\substack{\bs\pi\in \widehat{G(\F_\lambda)^t} \\ \bs\pi\neq 1}}\left[\sum_{\substack{\bs\pi'\in \widehat{G(\F_{\lambda})^t} \\ \bs\pi'\neq 1}} \sigma_c(X^t, \Fc_{\bs\pi,\bs\pi'})+\sum_{\substack{\lambda'\in\Lambda\\ N(\lambda')\le L \\ \ell'\neq\ell}} \sum_{\substack{\bs\pi'\in\widehat{G(\F_{\lambda'})^t} \\ \bs\pi'\neq 1}} \sigma_c( X^t, \Fc_{\bs\pi,\bs\pi'})\right],\]
  with (see \cite[Proof of Proposition 5.1]{KowLS06})
    \[\Fc_{\bs\pi,\bs\pi'}=
      \tau_{\bs\pi,\bs\pi'}\circ\begin{cases}
        \rho_{\lambda}^{\boxtimes t}&:\ell=\ell'\\
        (\rho_\lambda^{\boxtimes t},\rho_{\lambda'}^{\boxtimes t})&:\ell\neq\ell'
      \end{cases},\quad \tau_{\bs\pi,\bs\pi'}=
      \begin{cases}
        \bs\pi\otimes D(\bs\pi')&:\ell=\ell'\\
        \bs\pi\boxtimes D(\bs\pi')&:\ell\neq\ell',
      \end{cases}
    \]
    identifying lisse sheaves of $\overline\Q_\ell$-modules on $X^t$ and continuous representations $\pi_1(X^t,\overline\eta)\to\GL_m(\overline\Q_\ell)$.
    Note that $\rho_\lambda$ and $(\rho_\lambda,\rho_{\lambda'})$ respectively correspond to sheaves of $\F_\ell$- and $\Z/\ell\ell'$-modules (if $\ell\neq\ell'$).

    Hence, we need to show that
    \[\tilde C(L,(\Fc_\lambda)_{\lambda\in\Lambda})\ll tC(L,(\Fc_\lambda)_{\lambda\in\Lambda})^t,\]
    with $C$ defined in the statement of the theorem. Künneth's formula \cite[Exposé 6, 2.4]{DelEC} reduces this to the case $t=1$.\\

    \subsubsection{Case \ref{item:curves}: curves} The first bound on $C(L,(\Fc_\lambda)_{\lambda\in\Lambda})$ in Theorem \ref{thm:largeSieve}, when $d=1$, is contained in \cite{KowLS06} (with a power of $L$ smaller by one here, because we assume that the arithmetic and geometric monodromy groups coincide).\\

    \subsubsection{Case \ref{item:compSystem}: compatible systems on varieties by reduction to the tame case}\label{subsubsec:compSystemProof}
    Let $\lambda_0\in\Lambda$ be fixed and let $\varphi: Y\to X$ be the étale covering corresponding to $f\pmod{\lambda_0}$. As in \cite[Proposition 4.7]{KowLS06}, by the Hochschild--Serre sequence,
    \[\sigma_c(X, \Fc_{\pi,\pi'})\le\sigma_c(Y,\varphi^*\Fc_{\pi,\pi'}).\]
    It then suffices to show that the compatible system $\rho$ is tame when restricted to $Y$. Indeed, a result of Deligne \cite[Corollaire 2.8]{Ill81} shows that the Euler characteristic of a lisse tame sheaf is equal to its rank times the Euler characteristic of the variety, so by \cite[$\sigma-\chi$ inequality, p.40]{Katz01}, we have in this case
    \begin{eqnarray*}
      \sigma_c(Y,\varphi^*\Fc_{\pi,\pi'})&\ll& r+|\chi_c(Y,\varphi^*\Fc_{\pi,\pi'})|+\sum_{j=1}^{\dim X}|\chi_c(\text{codim }j\text{ in }Y,\varphi^*\Fc_{\pi,\pi'})|\\
                                         &\le&r+\dim(\pi)\dim(\pi')C(X,\rho_{\lambda_0}),
    \end{eqnarray*}
    where $C(X,\rho_{\lambda_0})$ is a constant depending only on the Euler characteristics $\chi_c$ of $Y$ and its subvarieties, hence only on $X$ and $\Fc_{\lambda_0}$.
    Therefore, $\tilde C(L,(\Fc_\lambda)_{\lambda\in\Lambda})$ is
      \begin{eqnarray*}
        &\ll&C(X,\rho_{\lambda_0})\cdot r\max_{\substack{\lambda\in\Lambda \\ N(\lambda)\le L}}\max_{\substack{\pi\in \widehat{G(\F_\lambda)} \\ \pi\neq 1}}d_\pi\left[\sum_{\substack{\pi'\in \widehat{G(\F_{\lambda})} \\ \pi'\neq 1}} d_{\pi'}+\sum_{\substack{\lambda'\in\Lambda\\ N(\lambda')\le L \\ \ell'\neq\ell}} \sum_{\substack{\pi'\in\widehat{G(\F_{\lambda'})} \\ \pi'\neq 1}} d_{\pi'}\right]\\
        &\ll&C(X,\rho_{\lambda_0})\cdot r^{\delta_{G=\SL_r}+1}L^{\dim G+1},
      \end{eqnarray*}
      where $d_\pi:=\dim \pi$. Indeed, the number (complex) of irreducible representations of $G(\F_\ell)$ is given by $|G(\F_\ell)^\sharp|\ll |Z(G(\F_\ell))|\ell^{\rank G}\le r^{\delta_{G=\SL_r}}\ell^{\rank G}$ (see \cite[Corollary 26.10]{TesMal11}), and the maximal dimension of such a representation is $\ll \ell^{\frac{\dim G-\rank G}{2}}$ (see \cite[Proposition 5.4]{KowLargeSieve08}).
      
      To show the tameness of the compatible system restricted to $Y$, first note that it is tame at $\lambda_0$, since it factors by construction through the pro-$\ell_0$-group $\{g\in \GL_n(\Oc_{\lambda_0}) : g\equiv 1\pmod{\lambda_0}\}$, where $\ell_0$ is the prime above which $\lambda_0$ lies. By purity, it suffices to look at restriction to curves (see \cite[Section 2.6]{Ill81}, also \cite{KerSchm10}). In this case, \cite[7.5.1]{KatzTwistedL} shows, from a compatibility result of Deligne, that tameness at one prime implies tameness of the whole system.
    
    \subsubsection{Case \ref{item:modular}: varieties through modular representations} Given $\pi\in \widehat{G(\F_\lambda)}$, $\pi'\in \widehat{G(\F_{\lambda'})}$, we need to bound the sums of Betti numbers $\sigma_c(X,\Fc_{\pi,\pi'})$. By \cite[Corollary 75.4]{CurRei06}, $\pi$ (resp. $\pi'$) is defined over the ring of integers of a finite extension $F_\lambda/E_\lambda$ (resp. $F_{\lambda'}/E_{\lambda'}$), say
    \[\pi: G(\F_\lambda)\to \GL_m(\Oc_{F_\lambda}),\qquad \pi': G(\F_{\lambda'})\to \GL_{m'}(\Oc_{F_{\lambda'}}).\]
    By reduction, we obtain
    \[\tilde\pi: G(\F_\lambda)\to \GL_m(k),\qquad \tilde\pi': G(\F_{\lambda'})\to \GL_{m'}(k'),\]
    for the residue fields $k/\F_\lambda$, resp. $k'/\F_{\lambda'}$. Let $\Std_\lambda: G(\F_\lambda)\to \GL_r(\F_\lambda)$ be the standard representation by inclusion.\label{page:Std}\\
    
    We start with the case $\ell=\ell'$, which is easier. We may then assume that $\F_\lambda=\F_{\lambda'}$. By Lemmas \ref{lemma:BettiRed} and \ref{lemma:semisimple}, along with the fact that $\rho_\lambda: \pi_1(X,\overline\eta)\to G(\F_\lambda)$ is surjective,
    \[\sigma_c(X, \Fc_{\pi,\pi'})\le \sigma_c(X, \Fc_{\tilde\pi,\tilde\pi'})\le\sigma_c(X,\tau_{\tilde\pi,\tilde\pi'}^\semisimple\circ\rho_\lambda).\]
    By Proposition \ref{prop:BurnsideLieModular}, every simple summand of $\tau_{\tilde\pi,\tilde\pi'}^\semisimple$ appears as a composition factor of $(\Std_\lambda\otimes\overline\F_\ell)^{\otimes M}$ for some $M\le \ell M_G$. It follows that
    \begin{eqnarray}      
      \sigma_c(X, \Fc_{\pi,\pi'})&\le&(\dim\pi)(\dim\pi')\max_{M\le \ell M_G}\sigma_c\left(X,\Fc_\lambda^{\otimes M}\right).\label{eq:conclell}
    \end{eqnarray}                
  
    Let us now assume that $\ell\neq\ell'$, and note that $(\rho_\lambda,\rho_{\lambda'})$ corresponds to the sheaf of $\Z/\ell\ell'$-modules on $X$ given by $\Delta^*(\Fc_\lambda\boxtimes\Fc_{\lambda'})$, for $\Delta: X\to X\times X$ the diagonal immersion. We may view $\Fc_{\pi,\pi'}$ as sheaf of $(\Oc_{F_\lambda}\otimes\Oc_{F_{\lambda'}})$-modules, and $\sigma_c(X,\Fc_{\pi,\pi'})$ is equal to the sum of the ranks (under Definition \ref{def:Betti}) of the corresponding étale cohomology groups with compact support. Then $\Fc_{\tilde\pi,\tilde\pi'}$ is a sheaf of $(k\otimes k')$-modules, and by Lemma \ref{lemma:semisimple} and the same argument as in Lemma \ref{lemma:BettiRed},
    \[\sigma_c(X,\Fc_{\pi,\pi'})\le \sigma_c(X,\Fc_{\tilde\pi,\tilde\pi'})=\sigma_c \left(X,\tau_{\tilde\pi,\tilde\pi'}^\semisimple\circ\Delta^*(\Fc_\lambda\boxtimes\Fc_{\lambda'})\right).\]
    As above, we get that every simple summand in $\tau_{\tilde\pi,\tilde\pi'}^\semisimple$ appears as a composition factor of the $(k\otimes k')$-module $\Std^{\otimes M}\boxtimes\Std^{\otimes M'}$ for some $M\le \ell M_{G}$ and $M'\le \ell' M_{G}$. This implies that
    \[\sigma_c(X,\Fc_{\tilde\pi,\tilde\pi'})\le (\dim\pi+\dim\pi')\max_{M\le \ell M_G}\max_{M'\le \ell' M_G}\sigma_c \left(X, \Delta^*\Gc_{M,M'}\right),\]
    where $\Gc_{M,M'}=(\Fc_\lambda\otimes k)^{\otimes M}\boxtimes(\Fc_{\lambda'} \otimes k')^{\otimes M'}$.

    By purity \cite[Corollary 8.5.6]{Fu11} and the localization sequence \cite[Proposition 5.6.11]{Fu11}, this implies that $\sigma_c \left(X, \Delta^*\Gc_{M,M'}\right)\le \sigma_c \left(X\times X, \Gc_{M,M'}\right)$.
    By Künneth's formula \cite[Exposé 6, 2.4]{DelEC},
    \begin{eqnarray*}
      \rank H^i_c(X\times X, \Gc_{M,M'})&=&\sum_{a+b=i}\rank H^a_c(X, \Fc_\lambda^{\otimes M})\rank H^b_c(X, \Fc_{\lambda'}^{\otimes M'})\\
                                 &\le&\sigma_c \left(X,\Fc_\lambda^{\otimes M}\right)\sigma_c \left(X,\Fc_{\lambda'}^{\otimes M'}\right),
    \end{eqnarray*}
    hence
      \begin{equation}
        \label{eq:conclellell'}
              \sigma_c(X,\Fc_{\pi,\pi'})\ll d(\dim\pi+\dim\pi')S(\lambda)S(\lambda')
            \end{equation}
            where $S(\lambda):=\max_{M\le N(\lambda)M_G} \sigma_c(X,\Fc_\lambda^{\otimes M})$.
      
      Thus, \eqref{eq:conclell} and \eqref{eq:conclellell'} yield that $\tilde C(L,(\Fc_\lambda)_{\lambda\in\Lambda})$ is, as in Section \ref{subsubsec:compSystemProof},
    \begin{eqnarray*}
       &\ll&\max_{\substack{\lambda\in\Lambda\\ N(\lambda)\le L}}S(\lambda) \max_{\substack{\pi\in \widehat{G(\F_\ell)} \\ \pi\neq 1}}\left[d_\pi \sum_{\substack{\pi'\in \widehat{G(\F_{\ell})} \\ \pi'\neq 1}} d_{\pi'}+d\sum_{\substack{\lambda'\in\Lambda\\N(\lambda')\le L \\ \ell'\neq\ell}} \sum_{\substack{\pi'\in\widehat{G(\F_{\ell'})} \\ \pi'\neq 1}} (d_\pi+d_{\pi'})S(\lambda')\right]\\
       &\ll&dr^{\delta_{G=\SL_r}}L^{\dim G}\max_{N(\lambda)\le L}S(\lambda)^2.
    \end{eqnarray*}
 
    \subsubsection{Projective monodromy groups}
    Let us now suppose that only assumption \ref{item:PG} holds. For $\eta: G\to PG$ the projection, we have
    \[P \Big(q,(\Fc_\lambda,\bs\Omega_\lambda)_{\lambda\in\Lambda}\Big)\le \frac{|\{\bs x\in X(\F_q)^t : (\eta\rho_\lambda(\Frob_{x_i,q}))_i\in\eta(\bs\Omega_\lambda)\text{ for all }\lambda\in\Lambda\}|}{|X(\F_q)|^t},\]
      and for any $\Omega\subset G(\F_\lambda)$,
      \[\frac{|\eta(\Omega)|}{|PG(\F_\lambda)|}=\frac{|\eta(\Omega)||Z(G(\F_\lambda))|}{|G(\F_\lambda)|}\le \frac{|\Omega|}{|G(\F_\lambda)|}=\delta_\Omega.\]
      Thus, it is enough to repeat the arguments above with $G$ replaced by $PG$. Indeed, since $(r,|\F_\lambda|-1)=r$ for all $\lambda\in\Lambda$, we have $\SL_r(\F_\lambda)\cong\PGL_r(\F_\lambda)$, so this can be done mutatis mutandis (in particular, the ``standard representation'' $PG(\F_\lambda)\to \GL_r(\F_\lambda)$ on page \pageref{page:Std} is well-defined).
    \qed

\section{Generic maximality of splitting fields and linear independence}\label{sec:genericMax}
This section mostly recalls some results from \cite{Kow08} and gives their analogues for $\SL$ when necessary.
\subsection{Generic maximality of splitting fields}
\begin{definition}
  For $R$ a ring and $r\ge 2$ an integer, we let
  \begin{eqnarray*}
    \Pc_{\SL_r}(R)&:=&\{P\in R[T]\text{ monic} : \deg(P)=r, \ P(0)=1\} \quad (r\ge 2),\\
    \Pc_{\Sp_r}(R)&:=&\{P\in \Pc_{\SL_r}(R) : P(T)=T^rP(1/T)\} \quad (r\ge 2\text{ even}).
  \end{eqnarray*}
\end{definition}
Note that for $G\in\{\SL_r,\Sp_r\}$, the set of (reversed) characteristic polynomials of elements of $G(R)$ is included in $\Pc_G(R)$, with equality at least when $R$ is a finite field (see the reference to Chavdarov's proof in \cite[Lemma B.5(2)]{KowLargeSieve08}).

Let $E$ be a Galois number field with ring of integers $\Oc$. Note that the Galois group of a polynomial $P\in \Pc_{G}(\overline E)$ of degree $n$ is contained in 
\begin{itemize}
\item $\Sf_r$ if $G=\SL_r$.
\item $W_{r}\le \Sf_r$ (the Coxeter group $B_{r/2}$) if $G=\Sp_r$ ($r$ even).
\end{itemize}
We will say that the Galois group is \emph{non-maximal} if this inclusion is strict.

\subsubsection{Detecting non-maximal Galois groups}

\begin{proposition}\label{prop:detectNonmax}
   Let $G=\SL_r$ \textup{(}$r\ge 2$\textup{)} or $G=\Sp_r$ \textup{(}$r\ge 2$ even\textup{)}. For every $t\ge 1$ and $\lambda\in\Spec_{1}(\Oc)$, there exist conjugacy-invariant sets $\bs \Omega_{\bs i,\lambda,G^t}\subset G(\F_\lambda)^t$ \textup{(}$\bs i\in \bs I$, with $\bs I$ an index set of size $4t$\textup{)} such that:
  \begin{itemize}[itemsep=0.2cm]
  \item $\bs\Omega_{i,\lambda,G^t}$ has density $\le \delta_{r,t}:=\left(\left(1-\frac{1}{r!}\right)\left(1+\frac{r}{\ell}\right)\right)^t\left(1-\frac{1}{2r}\right)$.
  \item If $\bs g=(g_1,\dots,g_t)\in \Pc_G(\Oc_\lambda)^t$ is such that
    $\prod_{i=1}^t \det(1-Tg_i)\in \Pc_G(\Oc_\lambda)\subset\Oc_\lambda[T]$
    has non-maximal Galois group, that is, strictly contained in $\Sf_r^t$ (resp. $W_r^t$) if $G=\SL_r$ (resp. $\Sp_r$), then there exists $\bs i\in \bs I$ such that $\bs g\pmod*{\lambda}\in\bs \Omega_{\bs i,\lambda,G^t}$.
  \end{itemize}  
\end{proposition}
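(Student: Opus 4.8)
The plan is to follow the strategy of Kowalski \cite[Proposition 8.1 and its proof]{Kow08}, which produces such "non-maximality detector" sets for the symplectic (Weyl group $W_r$) case, and to add the analogous combinatorial input for the symmetric group $\Sf_r$ needed for $G=\SL_r$. The underlying principle is classical Galois theory of polynomials: for a monic separable $P$ of degree $r$ over a field $K$, the Galois group of the splitting field, viewed inside $\Sf_r$ (resp. inside $W_r$ when the roots come in pairs $\alpha,\alpha^{-1}$), is a proper subgroup if and only if one of finitely many "resolvent" conditions holds — e.g. the discriminant is a square (index-$2$ obstruction landing in $\Af_r$, resp. the analogous obstruction for $W_r$), or one of a bounded list of resolvent polynomials attached to the maximal subgroups of $\Sf_r$ (resp. $W_r$) acquires a root in $K$. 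For the product polynomial $\prod_{i=1}^t\det(1-Tg_i)$ to have Galois group strictly inside $\Sf_r^t$ (resp. $W_r^t$), by the Goursat-type analysis in \cite[Section 8]{Kow08} it is necessary that either some individual factor has non-maximal group, or two distinct factors have "linked" splitting fields (a common subfield), and each of these finitely many — at most $4t$ after bookkeeping, as in \cite{Kow08} — events is detected by a congruence condition on the reductions $g_i \pmod\lambda$.

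The key steps, in order, would be: (1) Recall from \cite[Lemma B.5]{KowLargeSieve08} (Chavdarov) that for a finite field $\F_\lambda$ the set of reversed characteristic polynomials of $G(\F_\lambda)$ is exactly $\Pc_G(\F_\lambda)$, and that the reduction map $\Pc_G(\Oc_\lambda)\to\Pc_G(\F_\lambda)$ is compatible with specialization of splitting behaviour, so that a polynomial whose reduction mod $\lambda$ has a given factorization/Galois-theoretic type forces the characteristic-$0$ polynomial to have compatible structure. (2) For a single factor: define $\bs\Omega$ to be the union of the set of $g\in G(\F_\lambda)$ whose characteristic polynomial has square discriminant (this has density $\le 1-\tfrac{1}{2r}$ by a counting argument, using that the discriminant is a non-trivial function on $G(\F_\lambda)$ — for $\Sf_r$ the "sign" obstruction; for $W_r$ the analogous one from \cite{Kow08}), together with, for each maximal subgroup $H<\Sf_r$ (resp. $H<W_r$) not containing $\Af_r$ (resp. its analogue), the set of $g$ for which the associated resolvent has a root in $\F_\lambda$. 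By Chebotarev/Lang–Weil-type estimates on $G(\F_\lambda)$ — or simply by the explicit description of conjugacy classes — each such "resolvent has a root" set has density bounded by $1-\tfrac{1}{r!}+O(1/\ell)$, giving the factor $\bigl(1-\tfrac1{r!}\bigr)\bigl(1+\tfrac{r}{\ell}\bigr)$; the error $O(r/\ell)$ absorbs the difference between counting in $G(\F_\lambda)$ and the naive density $1/r!$, exactly as in \cite[Proof of Proposition 8.1]{Kow08}. (3) For linkage between two factors $g_i,g_j$: the condition that their splitting fields share a common subextension is again a congruence condition on the pair, and by the same counting it contributes a bounded density; packaging the single-factor obstructions for each of the $t$ coordinates and the pairwise-linkage obstructions, one gets an index set $\bs I$ of size $4t$ and the stated density bound $\delta_{r,t}$, where the product over $t$ coordinates of $\bigl(1-\tfrac1{r!}\bigr)\bigl(1+\tfrac{r}{\ell}\bigr)$ together with one extra factor $\bigl(1-\tfrac1{2r}\bigr)$ from the discriminant-square event appears. (4) Conjugacy-invariance is automatic since all the conditions (factorization type of the characteristic polynomial, splitting of resolvents) depend only on the conjugacy class of $\bs g\pmod\lambda$.

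The main obstacle I expect is step (2)–(3) in the $\SL_r$ case: one must know that the geometric/arithmetic Galois group of the "universal" characteristic polynomial over $\Pc_{\SL_r}$ is the full $\Sf_r$ and that its maximal subgroups are controlled, so that the resolvent conditions genuinely detect all non-maximality and the density bounds come out as claimed — this is where one has to supply the $A_{r-1}$-type analogue of what Kowalski does for $W_r$ (the $B_{r/2}$ / $C_{r/2}$ case). Concretely, the delicate point is verifying that the discriminant of a polynomial in $\Pc_{\SL_r}(\F_\lambda)$ takes non-square values on a proportion $\ge \tfrac1{2r}$ of $G(\F_\lambda)$, and that for each proper maximal $H<\Sf_r$ the set of $g\in\SL_r(\F_\lambda)$ whose characteristic polynomial "factors through $H$" has density $\le 1-\tfrac1{r!}+O(1/\ell)$; both follow from the fact that $\SL_r(\F_\lambda)$ contains regular semisimple elements with irreducible characteristic polynomial in abundance (an $r$-cycle in $\Sf_r$), together with Chebotarev for the cover $\Pc_{\SL_r}$, but making the constants explicit and uniform in $\ell$ requires care. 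The remaining steps are routine once the single-factor case and the Goursat/linkage bookkeeping from \cite[Section 8]{Kow08} are in hand. \qed
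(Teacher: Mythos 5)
The student's proposal takes a genuinely different route from the paper's, and it has gaps that would prevent it from establishing the stated bounds.

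The paper detects non-maximality through Chavdarov's method: the factorization pattern of $P \pmod{\lambda}$ determines the cycle type of Frobenius in the Galois group, and the Bauer--Gallagher lemma (a transitive subgroup of $\Sf_r$ containing a transposition and a prime-length cycle longer than $r/2$ is all of $\Sf_r$) reduces maximality to three explicit factorization-type conditions $\tilde\Omega_1,\tilde\Omega_2,\tilde\Omega_3$ in $\Pc_{\SL_r}(\F_\lambda)$, with a fourth needed in the $\Sp_r$ case from Kowalski. The multi-coordinate case is then handled by the \emph{product construction} $\bs\Omega_{(k,j)}=\Omega_0^{k-1}\times\Omega_j\times\Omega_0^{t-k}$, where $\Omega_0$ is the set of matrices whose characteristic polynomial does \emph{not} split into linear factors; this gives exactly $3t$ sets (resp.\ $4t$) and the multiplicative density $\le\left((1-\tfrac{1}{r!})(1+\tfrac{r}{\ell})\right)^{t-1}\cdot(\text{density of }\Omega_j)$. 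Your proposal replaces these factorization-type conditions with resolvents attached to maximal subgroups and a discriminant-square condition, and handles multi-coordinate non-maximality via Goursat-type pairwise linkage; that is a fundamentally different decomposition.

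Three concrete problems with your approach. First, the pairwise-linkage obstructions are indexed by pairs of coordinates, giving $\binom{t}{2}=O(t^2)$ sets, not the $4t$ required by the statement; there is no bookkeeping that collapses this to linear in $t$, and this matters because the number of sets enters the large-sieve bound. The paper avoids the pairwise blow-up entirely by the $\Omega_0^{k-1}\times\Omega_j\times\Omega_0^{t-k}$ construction, which is the key idea you are missing. Second, the density attribution is incorrect: the set of $g\in G(\F_\lambda)$ whose characteristic polynomial has square discriminant has density $\approx \tfrac12+O(1/\ell)$, not $1-\tfrac{1}{2r}$; in the paper the factor $1-\tfrac{1}{2r}$ is an upper bound for the density of the factorization-type sets $\tilde\Omega_j$, not for a discriminant condition. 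Third, the factor $\left(1-\tfrac{1}{r!}\right)\left(1+\tfrac{r}{\ell}\right)$ in the bound is the density (with finite-field error) of the set $\Omega_0$ of matrices whose characteristic polynomial fails to split into linear factors, raised to a power because of the product construction; it has nothing to do with "resolvent has a root" events, and the resolvent approach would produce densities depending on the particular maximal subgroup with no obvious uniform collapse to that expression. In short, replacing resolvents and pairwise linkage with Chavdarov's cycle-type conditions and Kowalski's $(k,j)$-indexed product sets is essential to get both the index-set size $4t$ and the density $\delta_{r,t}$.
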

\begin{proof}
  The case $G=\Sp_r$ is contained in \cite[Proof of Theorem 4.3]{Kow08} (see also \cite[Proof of Theorem 8.13]{KowLargeSieve08}), using \cite[Lemma B.5]{KowLargeSieve08} to switch between densities of matrices and characteristic polynomials, and up to replacing $\Z$ by $\Oc_\lambda$.

  The case $G=\SL_r$ is simpler, and we also apply the lemma of Bauer quoted by Gallagher \cite[p. 98]{Gall73}: if $H\le \Sf_r$ is transitive, contains a transposition and a $m$-cycle with $m>r/2$ prime, then $H=\Sf_r$. We define
  \begin{eqnarray*}
    \tilde\Omega_{0,\lambda}&=&\{P\in\Pc_{\SL_r}(\F_\lambda) : \text{product of linear factors}\}^c,\\
    \tilde\Omega_{1,\lambda}&=&\{P\in\Pc_{\SL_r}(\F_\lambda) : P\text{ reducible}\},\\
    \tilde\Omega_{2,\lambda}&=&\{P\in\Pc_{\SL_r}(\F_\lambda) : P=QQ_1\dots Q_s, \ \substack{Q,Q_i\text{ irred}\\\deg(Q)=2, \ \deg(Q_i)\text{ odd}}\}^c,\\
    \tilde\Omega_{3,\lambda}&=&\{P\in\Pc_{\SL_r}(\F_\lambda) : P \text{ has }\substack{\text{an irreducible factor}\\\text{of prime degree} }>r/2\}^c,\\
    \Omega_{j,\lambda}&=&\{g\in\SL_r(\F_\lambda) : \det(1-Tg)\in\tilde\Omega_{j,\lambda}\} \quad(0\le j\le 3),\\
    {\bs\Omega}_{\bs i,\lambda,G^t}&=&\Omega_{0,\lambda}^{k-1}\times\Omega_{j,\lambda}\times\Omega_{0,\lambda}^{t-k},\quad \bs i=(k,j)\in \bs I:=\{1,\dots,t\}\times\{1,2,3\},
  \end{eqnarray*}
  (we make the reader attentive to the fact that some of the sets above are defined using complements) and the same arguments as in the $\Sp_r$ case give the conclusion..
\end{proof}
\subsubsection{Application of the large sieve}
\begin{corollary}\label{cor:maxGal}
  Let $X$, $E$, $\Oc$ and $\Lambda$ be as in Theorem \ref{thm:largeSieve}. For every $\lambda\in\Oc$, let $\hat\Fc_\lambda$ be a rank $r$ lisse sheaf of free $\Oc_\lambda$-modules on $X$, corresponding to a representation $\hat\rho_\lambda: \pi_1(X,\overline\eta)\to\GL_r(\Oc_\lambda)$.
  We assume assumption \ref{item:notPG} or \ref{item:PG} of Theorem \ref{thm:largeSieve}, and hypothesis \ref{item:largeSievea}, \ref{item:largeSieveb} or \ref{item:largeSievec} of Corollary \ref{cor:largeSieve}, hold for $\hat\rho_\lambda$.
  For $\bs x\in X(\F_q)^t$, let
  \[P_\lambda(\bs x):= \prod_{i=1}^t P_\lambda(x_i),\quad P_\lambda(x_i)=\det(1-T\rho_\lambda(\Frob_{x_i,q})).\]
  Then, for every $t\ge 1$ and every finite field $\F_q$ of characteristic $p$, we have
  \begin{eqnarray}
    &&\frac{|\{\bs x\in X(\F_q)^t : P_\lambda(\bs x)\in\Oc_\lambda[T]\ \substack{\mathrm{has\, non-maximal}\\ \mathrm{Galois\, group}} \ \forall\lambda\in\Lambda\}|}{|X(\F_q)|^t}\label{eq:cor:maxGal}\\
    &\ll&\frac{t^2}{(1-\delta_{r,t})\delta_\Lambda}\begin{cases}
      \sup_{\lambda\in\Lambda}  \cond(\Fc_{\lambda})^t\frac{\log{q}}{q^{1/(tE_G)}}&\text{under \ref{item:largeSievea}}\\
            t(B_1^2dr^{\delta_{G=\SL_r}})^t(\log(B_2)M_G+\dim{G})\frac{\log\log{q}}{\log{q}}&\text{under \ref{item:largeSieveb}}\\
            \left(r^{\delta_{G=\SL_r}+1}C(X,\rho_{\lambda_0})\right)^t\frac{\log{q}}{q^{1/(2t(\dim G+1))}}&\text{under \ref{item:largeSievec}}\\
          \end{cases}\nonumber
  \end{eqnarray}
  with an absolute implied constant.
\end{corollary}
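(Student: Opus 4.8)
The plan is to combine Proposition~\ref{prop:detectNonmax}, which trades the global condition ``$P_\lambda(\bs x)$ has non-maximal Galois group'' for membership of the local Frobenius data in one of the $4t$ conjugacy-invariant sets $\bs\Omega_{\bs i,\lambda,G^t}$ of density $\le\delta_{r,t}$, with the large sieve bound of Corollary~\ref{cor:largeSieve}. First I would discard from $\Lambda$ the finitely many valuations with $N(\lambda)$ below a bound depending only on $r$, so that $\delta_{r,t}<1$ for every remaining $\lambda$ (recall $\delta_{r,t}\to(1-1/r!)^t(1-1/2r)$ as $N(\lambda)\to\infty$, and $(1-\delta_{r,t})^{-1}=O_r(1)$); this changes neither $\delta_\Lambda$ nor the shape of the claimed estimate. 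For each \emph{fixed} $\bs i\in\bs I$, the family $(\hat\Fc_\lambda,\bs\Omega_{\bs i,\lambda,G^t})_{\lambda\in\Lambda}$ then satisfies all the hypotheses of Theorem~\ref{thm:largeSieve} and Corollary~\ref{cor:largeSieve}: the monodromy input is precisely assumption~\ref{item:notPG} or~\ref{item:PG}, which we are assuming for $\hat\rho_\lambda$, and $\delta_{\bs\Omega}\le\delta_{r,t}<1$ by the first bullet of Proposition~\ref{prop:detectNonmax}.

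The core of the argument is the inclusion
\[
S\;\subseteq\;\bigcup_{\bs i\in\bs I}\Big\{\bs x\in X(\F_q)^t:\ (\rho_\lambda(\Frob_{x_i,q}))_{i}\in\bs\Omega_{\bs i,\lambda,G^t}\ \text{ for all }\lambda\in\Lambda\Big\},
\]
where $S$ denotes the set in the numerator of~\eqref{eq:cor:maxGal}. To prove it, fix $\bs x\in S$; for each $\lambda\in\Lambda$ the second bullet of Proposition~\ref{prop:detectNonmax}, applied to $\bs g=(\rho_\lambda(\Frob_{x_i,q}))_i\in\Pc_G(\Oc_\lambda)^t$ (whose product of reversed characteristic polynomials is $P_\lambda(\bs x)$), yields an index $\bs i\in\bs I$ with $\bs g\pmod\lambda\in\bs\Omega_{\bs i,\lambda,G^t}$. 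In every situation where this corollary is used the sheaves $(\hat\Fc_\lambda)_\lambda$ form a compatible system (Theorems~\ref{thm:Klconstr}, \ref{thm:superEvenConstr}, \ref{thm:chiConstr}), so $P_\lambda(\bs x)\in E[T]$ does not depend on $\lambda$; and, reading off the proof of Proposition~\ref{prop:detectNonmax}, the index $\bs i$ is dictated solely by which of the defining conditions of the sets $\tilde\Omega_{j,\lambda}$ (or their $\Sp$-analogues) the factorization of this $\lambda$-independent polynomial over $\overline E$ violates. Hence $\bs i$ can be chosen independently of $\lambda$, which is exactly what the displayed inclusion asserts. It then remains to apply the relevant case~\ref{item:largeSievea}, \ref{item:largeSieveb} or~\ref{item:largeSievec} of Corollary~\ref{cor:largeSieve} to each of the at most $4t$ sets on the right, with $\delta_{\bs\Omega}\le\delta_{r,t}$, and to sum: summing $O(t)$ copies turns the factor $t$ (resp.\ $t^2$) of Corollary~\ref{cor:largeSieve} into $t^2$ (resp.\ $t^3$) and reproduces the factor $(1-\delta_{r,t})^{-1}\delta_\Lambda^{-1}$.

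I expect the only genuinely delicate point to be the $\lambda$-uniformity of the detecting index $\bs i$ in the displayed inclusion. Without it one would be forced to take the union bound at the level of the sieving sets themselves, i.e.\ to sieve with $\bigcup_{\bs i}\bs\Omega_{\bs i,\lambda,G^t}$, whose density $\le 4t\,\delta_{r,t}$ can exceed $1$ for small $t$, so that the large sieve would not even apply; exploiting the compatible-system structure (equivalently, the $\lambda$-independence of $P_\lambda(\bs x)$ and of its splitting type over $\overline E$) is what makes the argument work. Everything else is routine: checking that the monodromy hypotheses transfer verbatim from $\hat\rho_\lambda$ to $(\hat\Fc_\lambda,\bs\Omega_{\bs i,\lambda,G^t})$, keeping track of the powers of $t$ against the three cases of Corollary~\ref{cor:largeSieve}, and absorbing the $O_r(1)$ factors into the implied constant. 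In the projective case~\ref{item:PG} one uses, as in Theorem~\ref{thm:largeSieve} and Theorem~\ref{thm:monEvenDir}, that $\PGL_r(\F_\lambda)=\SL_r(\F_\lambda)$ for $\lambda$ of degree~$1$, so that $G=\SL_r$ throughout and Proposition~\ref{prop:detectNonmax} applies with this $G$.
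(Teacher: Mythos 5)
Your argument follows the paper's proof, which is itself a one-liner: bound the numerator of \eqref{eq:cor:maxGal} by $\sum_{\bs i\in\bs I}|\{\bs x : (\rho_\lambda(\Frob_{x_i,q}))_i\in\bs\Omega_{\bs i,\lambda,G^t}\ \forall\lambda\in\Lambda\}|$ via Proposition~\ref{prop:detectNonmax}, then apply Corollary~\ref{cor:largeSieve} to each of the $4t$ summands and absorb $(1-\delta_{r,t})^{-1}=O_r(1)$; your accounting of the powers of $t$ and your use of the projective reduction $\PGL_r(\F_\lambda)=\SL_r(\F_\lambda)$ all match. You are also right that the one genuinely delicate point is that the detecting index $\bs i$ must be chosen \emph{uniformly in $\lambda$} (otherwise one sieves with $\bigcup_{\bs i}\bs\Omega_{\bs i,\lambda,G^t}$, whose density $4t\,\delta_{r,t}$ may exceed $1$), and that this uniformity is exactly where the compatible-system structure enters. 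This is left implicit in the paper's proof, and your decision to make it explicit is sound.

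However, the \emph{reason} you give for the $\lambda$-uniformity is off. You say that $\bs i$ is ``dictated solely by which of the defining conditions the factorization of this $\lambda$-independent polynomial over $\overline E$ violates'', but over $\overline E$ every $P(x_i)$ splits into $r$ linear factors, so the factorization over $\overline E$ carries no Galois-theoretic information, and the sets $\tilde\Omega_{j,\lambda}$ are in any case defined by factorization patterns over $\F_\lambda$, which \emph{do} vary with $\lambda$. The correct mechanism (this is Gallagher's/Kowalski's Chebotarev-type argument underlying Proposition~\ref{prop:detectNonmax}) is: since $P_\lambda(\bs x)=P(\bs x)\in E[T]$ is $\lambda$-independent, its Galois group $\Gal(P(\bs x)/E)$ is $\lambda$-independent; if it is non-maximal, Bauer/Jordan-type criteria imply it misses a conjugacy class of one of four specified cycle types (full cycle, transposition, $2$-cycle with odd rest, or long prime cycle) in at least one coordinate, and \emph{that} choice of missing type is what fixes $\bs i=(k,j)$. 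For each such $\bs x$ and \emph{every} $\lambda\in\Lambda$, the factorization pattern of $P(x_k)\bmod\lambda$ records the cycle type of $\Frob_\lambda\in\Gal(P(x_k)/E)$, which therefore avoids the missing type, placing $\bs g\pmod\lambda$ in $\bs\Omega_{\bs i,\lambda,G^t}$. So $\bs i$ is constant in $\lambda$ because it is determined by the global group $\Gal(P(\bs x)/E)$, not by any factorization over $\overline E$. With this correction your argument is complete and matches the paper's.
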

\begin{proof}
  By Proposition \ref{prop:detectNonmax}, the density on the left-hand side is
  \begin{eqnarray*}
    &\le&\sum_{\bs i\in \bs I}\frac{|\{\bs x\in X(\F_q)^t : (\rho_\lambda(\Frob_{x_i,q}))_{1\le i\le t}\in\bs\Omega_{\bs i,\lambda,G^t} \  \forall\lambda\in\Lambda\}|}{|X(\F_q)|^t},
  \end{eqnarray*}
  and it suffices to apply Corollary \ref{cor:largeSieve} to each summand.
\end{proof}

\subsection{Girstmair's method}
Below, we recall the following forms of Girstmair's results \cite{Girst82,Girst99}, as exposed in \cite{Kow08} (with some changes in the symmetric case).
\begin{definition}
  For a set $M$ of complex numbers, let
  \begin{eqnarray*}
    \Rel_m(M)&=&\{(n_\alpha)\in\Z^M : \prod_{\alpha\in M}\alpha^{n_\alpha}=1\}.
  \end{eqnarray*}
\end{definition}
\begin{proposition}\label{prop:Girstmair}
  Let $E$ be a number field, $t\ge 1$ an integer, and for $1\le i\le t$, let $P_i\in E[X]$ be a polynomial with splitting field $K_i$, set of roots $M_i\subset K_i$, and Galois group $G_i:=\Gal(K_i/E)$. We assume that the fields $K_i$ are linearly disjoint, and we let $M=\bigcup_{i=1}^t M_i$, $K=K_1\cdots K_t$. Then
  $\Rel_m(M)\otimes\Q=\bigoplus_{i=1}^t \Rel_m(M_i)\otimes\Q$. Moreover:
  \begin{enumerate}
  \item \textup{(}$W$ case\textup{)} Assume that $G_i\cong W_{r}$ for some $r\ge 4$ even, acting by permutation on $M_i$.
    If $|\alpha|=1$ for every $\alpha\in M_i$, then
    \[\Rel_m(M_i)\otimes\Q=\left\{(n_\alpha)\in\Q^{M_i} : n_{\alpha}=n_{\overline\alpha}\right\}.\]
  \item \textup{(}$\Sf$ case\textup{)} Assume that $G_i\cong \Sf_r$ for some $r\ge 2$, acting by permutation on $M_i$. Then $\Rel_m(M_i)\otimes\Q$ is either:
    \begin{enumerate}
    \item if $r=2$:\quad $0$,\quad $\Q\bs 1$,\quad or \quad $\Q(-1,1)$.
    \item if $r\ge 3$:\quad $0$\quad or\quad $\Q\bs 1$.
    \end{enumerate}
  \end{enumerate}
  \end{proposition}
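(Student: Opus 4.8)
The plan is to follow Girstmair's method \cite{Girst82,Girst99} in the form used by Kowalski \cite{Kow08}, the only genuinely new ingredient being the treatment of the symmetric case $G_i\cong\Sf_r$ (Kowalski handles the Weyl group $W_r$ of the symplectic groups). The starting point is that $\Rel_m(M)$ is a $\Z[\Gal(K/E)]$-submodule of the permutation module $\Z^M$, so that tensoring with $\Q$ turns everything into a question about $\Q$-rational representations of the Galois groups, together with a small amount of field theory.

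\emph{The direct sum decomposition.} The inclusion $\bigoplus_i\Rel_m(M_i)\otimes\Q\subseteq\Rel_m(M)\otimes\Q$ is clear. For the converse, linear disjointness of the $K_i/E$ gives $\Gal(K/E)\cong\prod_{i=1}^t G_i$. Given $(n_\alpha)\in\Rel_m(M)$, set $\beta_i:=\prod_{\alpha\in M_i}\alpha^{n_\alpha}\in K_i^\times$, so that $\prod_i\beta_i=1$; applying to this relation an element of $\Gal(K/E)$ that is trivial on every $K_j$ with $j\neq i$ shows that $\beta_i$ is fixed by all of $G_i$, hence $\beta_i\in E^\times$. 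Since the roots in question are eigenvalues of geometric Frobenius on sheaves pure of weight $0$, a Galois-fixed product of them is a Weil number all of whose archimedean conjugates have absolute value $1$; clearing the $\sqrt q$ so that one is dealing with genuine algebraic integers, Kronecker's theorem forces $\beta_i$ to be a root of unity, i.e. $(n_\alpha)_{\alpha\in M_i}\in\Rel_m(M_i)\otimes\Q$. This reduces the statement to the case $t=1$, to which the two displayed assertions pertain.

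\emph{The case $t=1$.} Write $G=G_i$, $M=M_i$ and $L:=\Rel_m(M)\otimes\Q$, a $\Q[G]$-submodule of $\Q^M$. For $G\cong\Sf_r$ one has $\Q^M=\Q\bs 1\oplus V$ with $V$ the standard reflection representation, irreducible over $\Q$ for every $r\geq2$, so the only candidate submodules are $0,\ \Q\bs 1,\ V,\ \Q^M$ (and for $r=2$, $V=\Q(-1,1)$). For $G\cong W_r$ put $D=\{(n_\alpha):n_\alpha=n_{\bar\alpha}\}$, on which $W_r$ acts through the action of $\Sf_{r/2}$ permuting the conjugate pairs $\{\alpha,\bar\alpha\}$ (note $\bar\alpha=\alpha^{-1}$ since $|\alpha|=1$), and let $D'$ be its complement, the $(-1)$-eigenspace of the involution flipping all pairs, so that $\Q^M=D\oplus D'$. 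In both cases the ``diagonal'' part is automatically contained in $L$: indeed $\alpha\bar\alpha=|\alpha|^2=1$ gives $e_\alpha+e_{\bar\alpha}\in\Rel_m(M)$, so $D\subseteq L$ in the $W_r$ case, while in the $\Sf_r$ case $\Q\bs 1\subseteq L$ precisely when $\prod_\alpha\alpha$ is a root of unity (which is why the possibility $L=0$ must be allowed).

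\emph{Excluding the remaining submodules.} The key mechanism is a reflection argument: if $(n_\alpha)\in\Rel_m(M)$ is not constant on the orbit of a reflection $s$ — a transposition $s=(\alpha\ \beta)$ in $\Sf_r$, or an involution of $W_r$ swapping $\alpha\leftrightarrow\bar\alpha$ within a single pair — then $s\cdot(n_\alpha)-(n_\alpha)$ is a nonzero multiple of $e_\alpha-e_\beta$ lying in $\Rel_m(M)$, whence $(\alpha/\beta)^k=1$ for some $k\neq0$, so $\alpha/\beta$ is a root of unity. Transitivity of $G$ on the roots then makes every ratio of roots a root of unity, so the roots share a common power $\gamma\in K^G=E$, which (again a weight-$0$ Weil number) is a root of unity; hence every $\alpha\in M$ is a root of unity and $K/E$ is abelian. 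This contradicts $G\cong\Sf_r$ with $r\geq3$ and $G\cong W_r$ with $r\geq4$, both nonabelian; so for $\Sf_r$ with $r\geq3$ only $L\in\{0,\Q\bs 1\}$ survives, for $W_r$ one gets $L\cap D'=0$ and hence $L=D$, and for $\Sf_2$ (abelian) the reflection line $\Q(-1,1)$ survives, the full module $\Q^M$ being ruled out by the running hypothesis — as in \cite{Kow08} — that no individual root is a root of unity. The step I expect to require the most care is making the ``weight-$0$ Weil number $\Rightarrow$ root of unity'' implication fully rigorous in the present normalization (the roots themselves are not algebraic integers, only $\sqrt q$ times them are), and, in the symmetric case, carefully checking for small $r$ that no further sub-representation of $\Q^M$ can occur.
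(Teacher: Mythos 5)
Your proof takes essentially the same route as the paper's (and Kowalski's): view $\Rel_m(M_i)\otimes\Q$ as a $\Q[G_i]$-submodule of the permutation module $\Q^{M_i}$, decompose the latter into $\Q$-irreducibles, and exclude the submodules containing the non-trivial constituent by showing $K_i/E$ would then have to be abelian. Your ``reflection argument'' is a nice concrete reformulation of the irreducibility of the standard representation (any submodule not inside $\Q\bs 1$ must contain it), not a new mechanism, so structurally you and the paper coincide.

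The one interesting divergence is in how ``all root ratios are roots of unity $\Rightarrow K_i/E$ abelian'' is justified in the $\Sf_r$ case. The paper deduces $\alpha_1^{nm}=N_{M_i/E}(\alpha_1)^m\in E$ and concludes that $K_i/E$ is a Kummer extension, hence abelian; as written this step is not valid (a radical extension need not be abelian when the relevant roots of unity are not in $E$: for $P=X^3-2$ over $\Q$, every ratio of roots is a cube root of unity, yet $\Gal$ is $\Sf_3$, so without a further hypothesis the $r\ge 3$ alternative of the proposition fails). Your version — Kronecker's theorem applied to the element of $E^\times$ whose archimedean absolute values are all $1$, so that the roots are forced to be roots of unity and $K_i\subseteq E(\mu_N)$ — is the argument that actually closes the gap, and you're right that this is the delicate point. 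But it quietly uses the hypothesis that every $\alpha$ has all its archimedean conjugates of absolute value $1$, which the statement of the proposition only imposes in the $W$ case (though it holds in every application, and is likewise needed to make your proof of the direct-sum decomposition go through). For $r=2$, the $\Q^2$ case similarly cannot be excluded from the stated hypotheses alone ($\alpha_1=\zeta_3$, $\alpha_2=\zeta_3^{-1}$ over $\Q$ has Galois group $\Sf_2$ and full-rank $\Rel_m$); you correctly flag that an extra ``no root is a root of unity'' input is being used, whereas the paper's assertion that $\Rel_m\otimes\Q=\Q^2$ would force $\Rel_m=\Z^2$ does not hold for full-rank proper sublattices. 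In short: same approach, and your execution of the key ``abelian'' step is the sounder one, at the cost of making explicit a hypothesis that the proposition (like the paper's proof) uses tacitly.
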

  \begin{proof}
    The $W$ case is \cite[Proposition 2.4, (2.5)]{Kow08}. However, $\Q$ in the paragraph after the second display of \cite[p. 13]{Kow08} should probably be replaced by $E$, and the contradiction comes from the fact that the splitting field of $K/E$ would be a 2-group. 

    For the $\Sf$ case, note that the permutation representation $F(M_i)$ of $\Sf_r$ decomposes as the sum of two irreducible representations
    \[F(M_i)=\Q\bs 1\bigoplus G(M_i), \text{ where }  G(M_i)=\left\{(n_\alpha)\in\Q^{M_i} : \sum_{\alpha\in M_i}n_\alpha=0\right\}.\]
    If $G(M_i)$ is contained in the subrepresentation $\Rel_m(M_i)\otimes\Q$ of $F(M_i)$, then there exists $m\ge 1$ such that $(\alpha_j/\alpha_1)^m=1$ for $1\le j\le r$, if $M_i=\{\alpha_1,\dots,\alpha_r\}$, so that $\alpha_1^{nm}=N_{M_i/E}(\alpha_1)^m\in E$. Hence, $K_i/E$ is a Kummer extension and $\Gal(K_i/E)$ is abelian, which implies that $r=|M_i|=2$. If $r=2$, note that $\Rel_m(M_i)\otimes\Q=\Q^2$ would imply that $\Rel_m(M_i)=\Z^2$, a contradiction.
  \end{proof}
  
\subsection{Conclusion}
\begin{corollary}\label{cor:linIndep}
  Under the hypotheses of Corollary \ref{cor:maxGal}, assume moreover that $(\Fc_\lambda)_{\lambda\in\Lambda}$ forms a \emph{compatible system}, i.e. that for all $x\in X(\F_q)$, $P_\lambda(x)=P(x)\in E[T]$ does not depend on $\lambda$. For every $\bs x\in X(\F_q)^t$ and $1\le i\le t$, let $M(x_i)\subset\C$ be the set of zeros of $\det(1-T\rho_\lambda(\Frob_{x_i,q}))$, so that the set of zeros of $P_\lambda(\bs x)$ is $\bigcup_{i=1}^t M(x_i)$. Then, for all but at most a proportion \eqref{eq:cor:maxGal} of $\bs x\in X(\F_q)^t$, we have
  \begin{eqnarray*}
    \Rel_m(M(\bs x))&=&
                        \begin{cases}
                          \otimes_{i=1}^t\Z\bs 1&:G=\SL_r \ (r\ge 2)\\
                          \otimes_{i=1}^t\{(n_\alpha)\in\Z^{M(x_i)}: n_\alpha=n_{\overline\alpha}\}&:G=\Sp_r \ (r\ge 4\text{ even}).
                        \end{cases}
  \end{eqnarray*}
  In other words, the only multiplicative relations among the roots are the trivial ones. If we write the roots of $P(x_i)$ as
  \[
    \begin{cases}
      e(\theta_{j}(x_i)) \quad \ \ (1\le j\le r)&:G=\SL_r\\
      e(\pm \theta_{j}(x_i)) \quad (1\le j\le r/2)&:G=\Sp_r,
    \end{cases}
  \]
  then the angles
  \[
    \begin{cases}
      1, \ \theta_{j}(x_i) \quad (1\le i\le t, \ 1\le j\le r-1)&:G=\SL_r\\
      1, \ \theta_{j}(x_i) \quad (1\le i\le t, \ 1\le j\le r/2)&:G=\Sp_r
    \end{cases}
  \]
  are $\Q$-linearly independent for all but at most a proportion \eqref{eq:cor:maxGal} of $\bs x\in X(\F_q)^t$.
\end{corollary}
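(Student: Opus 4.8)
The plan is to chain Corollary~\ref{cor:maxGal} with Girstmair's structure theorem (Proposition~\ref{prop:Girstmair}), using the compatible-system hypothesis to transfer the mod-$\lambda$ output of the large sieve into a statement over $E$. Because $(\Fc_\lambda)_{\lambda\in\Lambda}$ is a compatible system, $P_\lambda(\bs x)=P(\bs x):=\prod_{i=1}^t P(x_i)$ lies in $E[T]$ and is independent of $\lambda$; the event appearing in \eqref{eq:cor:maxGal} then contains the event that the Galois group of $P(\bs x)$ over $E$ is a proper subgroup of $\Sf_r^t$ (if $G=\SL_r$) or $W_r^t$ (if $G=\Sp_r$), since non-maximality over $E$ forces it over every completion $E_\lambda$. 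Hence by Corollary~\ref{cor:maxGal}, for all but a proportion \eqref{eq:cor:maxGal} of $\bs x\in X(\F_q)^t$ the Galois group of $P(\bs x)$ over $E$ is the full $\Sf_r^t$, resp.\ $W_r^t$. Fix such a good $\bs x$ for the rest.

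Next I unpack this maximality. Let $K_i$ be the splitting field of $P(x_i)$ over $E$ and $G_i=\Gal(K_i/E)$. Restriction gives an embedding $\Gal(K_1\cdots K_t/E)\hookrightarrow\prod_{i=1}^t G_i$, and each $G_i$ sits inside $\Sf_r$ (resp.\ $W_r$) via its action on the $r$ roots of $P(x_i)$, as recalled before Proposition~\ref{prop:detectNonmax}. Since the composite image is all of $\Sf_r^t$ (resp.\ $W_r^t$), each $G_i$ is the full $\Sf_r$ (resp.\ $W_r$) and, comparing orders, the $K_i$ are linearly disjoint over $E$. In particular every $P(x_i)$ is separable and irreducible, so the root sets $M(x_i)$ are pairwise disjoint and $M(\bs x)=\bigsqcup_{i=1}^t M(x_i)$, as asserted.

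Now apply Proposition~\ref{prop:Girstmair}: linear disjointness gives $\Rel_m(M(\bs x))\otimes\Q=\bigoplus_{i=1}^t \Rel_m(M(x_i))\otimes\Q$, and each factor is computed separately. For $G=\Sp_r$ ($r\ge4$) the $W$ case applies, using $|\alpha|=1$ for all $\alpha\in M(x_i)$ (the sheaves are pure of weight $0$), giving $\Rel_m(M(x_i))\otimes\Q=\{(n_\alpha):n_\alpha=n_{\overline\alpha}\}$; since the pair relations $\alpha\overline\alpha=1$ already lie in $\Rel_m(M(x_i))$ and the sublattice $\{(n_\alpha)\in\Z^{M(x_i)}:n_\alpha=n_{\overline\alpha}\}$ is saturated, intersecting with $\Z^{M(x_i)}$ forces equality. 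For $G=\SL_r$ the $\Sf$ case yields $0$ or $\Q\bs 1$ when $r\ge3$, plus the extra possibility $\Q(-1,1)$ when $r=2$; the latter would force $\alpha_1=\alpha_2$, contradicting separability, while $\det\rho_\lambda(\Frob_{x_i,q})=1$ gives $\prod_{\alpha\in M(x_i)}\alpha=1$, so $\bs 1\in\Rel_m(M(x_i))$ rules out $0$; a rank-$1$ subgroup of $\Z^{M(x_i)}$ contained in $\Q\bs 1$ and containing $\bs 1$ must be $\Z\bs 1$. Reassembling blocks (an integral vector of $\bigoplus_i\Q\bs 1$ has integral coordinates on each block, and likewise for the symplectic lattices) gives the displayed formula for $\Rel_m(M(\bs x))$.

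Finally, for the linear independence of angles, suppose a nontrivial $\Q$-linear relation holds among $1$ and the $\theta_j(x_i)$ in the indicated ranges; clearing denominators and applying $z\mapsto e(z)$ turns it into an element of $\Rel_m(M(\bs x))$ whose coordinate at $e(\theta_r(x_i))$ (for $\SL_r$), resp.\ at $e(-\theta_j(x_i))$ (for $\Sp_r$), vanishes in each block. The explicit description of $\Rel_m(M(\bs x))$ then forces all remaining coefficients, and hence the constant term, to be $0$, a contradiction. The only genuinely delicate point is eliminating the exceptional $\Q(-1,1)$ relation in the $\SL_2$ case — and, in the same spirit, ensuring separability of each $P(x_i)$ and disjointness of the $M(x_i)$ — all of which follow from maximality of the Galois group but must be invoked with care because Proposition~\ref{prop:Girstmair} explicitly lists that extra option when $r=2$; the rest is formal bookkeeping.
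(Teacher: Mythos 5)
Your proposal follows exactly the paper's route: Corollary~\ref{cor:maxGal} gives generic maximality of the Galois group over $E$ (after transferring from the completions $E_\lambda$ via the compatible-system hypothesis), then Proposition~\ref{prop:Girstmair} determines $\Rel_m(M(\bs x))\otimes\Q$ block by block, and the integral structure plus the trivial relations $\bs 1\in\Rel_m(M(x_i))$ (from $\det=1$, resp.\ from $\alpha\overline\alpha=1$) pin down $\Rel_m(M(\bs x))$ itself. The passage from the multiplicative-relation description to $\Q$-linear independence of the angles is also the intended one. Overall this is the paper's argument, just with the bookkeeping (linear disjointness, separability, saturation of the symplectic sublattice) spelled out.

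One small slip: in the $\SL_2$ sub-case you claim that $\Rel_m(M(x_i))\otimes\Q=\Q(-1,1)$ ``would force $\alpha_1=\alpha_2$, contradicting separability.'' That inference is not correct: $\Rel_m(M(x_i))=\Z(-n,n)$ only gives $(\alpha_2/\alpha_1)^n=1$, i.e.\ the ratio is a root of unity, which is compatible with $\alpha_1\neq\alpha_2$ and with $\Gal=\Sf_2$. The correct way to dismiss this option is the same observation you use to rule out $0$: since $\det\rho_\lambda(\Frob_{x_i,q})=1$ we have $\bs 1\in\Rel_m(M(x_i))$, and $\bs 1\notin\Q(-1,1)$, so the case $\Q(-1,1)$ cannot occur (this is how the paper argues, via ``$\Z\bs 1\subset\Rel_m(M(x_i))$ and Proposition~\ref{prop:Girstmair}''). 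Once that is fixed the proof is complete and matches the paper.
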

\begin{proof}
  By the compatibility assumption and Corollary \ref{cor:maxGal}, $P_\lambda(\bs x)$ has maximal Galois group $\Sf_r^t$ or $W_r^t$ for all but at most a proportion \eqref{eq:cor:maxGal} elements $\bs x\in X(\F_q)^t$. Let us assume this maximality condition holds, in which case the hypotheses of Proposition \ref{prop:Girstmair} hold. Since the product of the zeros of $P_{\lambda}(x_i)$ is equal to $1$, we have $\Z\bs 1\subset\Rel_m(M(x_i))$ for all $x_i\in X(\F_q)$. By Proposition \ref{prop:Girstmair} and the fact that $\Rel_m(M(x_i))$ is a lattice, this implies that $\Rel_m(M(\bs x))$ is as given in the statement.
\end{proof}

\section{Proof of the generic linear independence theorems}\label{sec:proofGenericLI}
In this section, we finally prove Theorems \ref{thm:linIndepExp}, \ref{thm:genericLIGP} and \ref{thm:genericLIGPchi}, by applying Corollary \ref{cor:linIndep}. That basically means checking that assumptions \ref{item:notPG} or \ref{item:PG} of Theorem \ref{thm:largeSieve} (on monodromy groups) apply, as well as hypothesis \ref{item:largeSievea} or \ref{item:largeSievec} of Corollary \ref{cor:largeSieve}.

\subsection{Proof of Theorem \ref{thm:linIndepExp} (exponential sums)}
Assumption \ref{item:notPG} of Theorem \ref{thm:largeSieve} holds by Theorems \ref{thm:monKl} and \ref{thm:monBirch} for Kloosterman sums and Birch sums respectively, with the set of valuations $\Lambda_{r,p}$, $\Lambda_p$ given therein. For Kloosterman sums, the dependency with respect to $p$ can be removed by Remark \ref{rem:monKl}.

Since the sheaves are on curves, \ref{item:largeSievea} of Corollary \ref{cor:largeSieve} holds. By \cite[Theorem 4.1.1(3,4)]{KatzGKM}, $\cond(\Kl_{r,\lambda})$ is bounded by a constant depending only on $r$ (and not on $p$), and the same holds true for Birch sheaves by the bounds on Swan conductors and ramification points in \cite[Chapter 7]{KatzESDE}.
\qed
\subsection{Proof of Theorem \ref{thm:genericLIGP} (super-even primitive characters)}
Assumption \ref{item:notPG} of Theorem \ref{thm:largeSieve} applies by Theorem \ref{thm:monEvenDir}, with the set of valuations $\Lambda_{k,p}$ given by the latter.

If $p>k$, we see (as in \cite[Lemma 5.2]{Katz16}) that $\W_{2\kappa\codd}=\prod_{1\le a\le 2\kappa,\, a\text{ odd}}W_1$ is the space of odd polynomials of degree $\le 2\kappa-1$ and $\Prim_{2\kappa\codd}$ the subspace of those polynomials with degree exactly $2\kappa-1$. One can then apply Corollary \ref{cor:largeSieve}\ref{item:largeSievec}, which gives the theorem.\\

To obtain the weaker error (but with explicit base of $t$) in Remark \ref{rem:weakExplicit1}, one applies Corollary \ref{cor:largeSieve}\ref{item:largeSieveb} instead, using the bounds for Betti numbers in \cite[Lemma 5.2]{Katz16}, giving $B_1=3(2\kappa+1)^{2\kappa}$, $B_2=2\kappa+1$
\qed
\subsection{Proof of Theorem \ref{thm:genericLIGPchi} (even primitive characters)}
In this case, hypothesis \ref{item:PG} of Theorem \ref{thm:largeSieve} (projective monodromy groups) applies by Theorem \ref{thm:monEvenDir}.

If $p>m$, then as in \cite{Katz16}, we see that $\W_m=\prod_{1\le a\le m}W_1$ is the space of polynomials of degree $\le m$ with constant term $1$ and $\Prim_m$ is the subspace of those polynomials with degree exactly $m$. One can then apply Corollary \ref{cor:largeSieve}\ref{item:largeSievec}, which gives the theorem.\\

To obtain the weaker error (but with explicit base of $t$) in Remark \ref{rem:weakExplicit2}, one applies Corollary \ref{cor:largeSieve}\ref{item:largeSievec} instead, proceeding from \cite{Kat13} as in \cite[Lemma 5.2]{Katz16} to bound the Betti numbers. Let us indeed show that Hypothesis \ref{item:largeSieveb} of Corollary \ref{cor:largeSieve} holds with $B_1=3(m+1)^{m+1}$ and $B_2=m+1$. Let $M\ge 1$ be an integer. With coordinates $(t_1,\dots,t_M,f)$ on $\A^M\times\Prim_m$,
  \[H^i_c\left(\Prim_m,\Lc_\univ^{\otimes M}\right)=H^{i+M}_c\left(\A^M\times\Prim_{m}, \Lc_{\psi(f(t_1)+\dots+f(t_M))}\right).\]
  Note that $\A^M\times\Prim_m$ is defined in $\A^{M+1+m}$ (an additional coordinate is needed for the condition that $a_m\neq 0$) and $f(t_1)+\dots+f(t_M)$ is a polynomial in $t_i,a_i$ of degree $m+1$. By \cite[Theorem 12]{Katz01} (with $(\delta,N,r,d,s,e_j)=(m+1,M+1+m,1,2,0,0)$), we have
  \begin{eqnarray*}
    \sigma_c\left(\Prim_m,\Lc_\univ^{\otimes M}\right)&\le&3 \left(1+\max(m+1,3)\right)^{M+m+1}=3 \left(m+1\right)^{M+m+1}.
  \end{eqnarray*}
  \qed

\ifarxiv
\bibliographystyle{alpha2}
\bibliography{references}
\else
\printbibliography
\fi

\end{document}